\newtheorem{theorem}[subsection]{Theorem}
\newtheorem{lemma}[subsection]{Lemma}
\newtheorem{corollary}[subsection]{Corollary}
\newtheorem{proposition}[subsection]{Proposition}
\theoremstyle{definition}
\newtheorem{definition}[subsubsection]{Definition}
\newtheorem{remark}[subsection]{Remark}
\newcommand{\dist}{\mathrm{dist}}
\newcommand{\tr}{\mathrm{tr}}
\newcommand{\spt}{{\mathrm{spt}\,}}
\newcommand{\bA}{\bar{A}}
\newcommand{\bH}{\bar{H}}
\newcommand{\bh}{\bar{h}}
\newcommand{\tA}{\tilde{A}}
\newcommand{\tH}{\tilde{H}}
\newcommand{\tS}{\tilde{S}}
\newcommand{\tQ}{\tilde{Q}}
\newcommand{\tlambda}{\tilde{\lambda}}
\newcommand{\blambda}{\bar{\lambda}}
\newcommand{\vecdiv}{\mathrm{div}}
\title{Convexity estimates for mean curvature flow with free boundary}
\author{Nick Edelen}
\address{Department of Mathematics, Stanford University, 450 Serra Mall, Bldg 380, CA 94305}
\email{nedelen@math.stanford.edu}
\keywords{mean curvature flow; convexity estimates; free boundary; limit flows}
\begin{document}

\begin{abstract}
We prove the estimates of \cite{huisken-sinestrari:scalar-pinching} and \cite{huisken-sinestrari:convex-pinching} for finite-time singularities of mean-convex, mean curvature flow with free boundary in a barrier $S$.  Here $S$ can be any embedded, oriented surface in $R^{n+1}$ of bounded geometry and positive inscribed radius.  We also prove the estimate \cite{huisken:umbilic-pinching} in the case of convex flows and $S = S^n$, which gives an alternative proof to \cite{stahl:singularity}.
\end{abstract}

\maketitle

\section{Introduction}

We are interested in immersed, mean-convex, mean curvature flow with free boundary in a surface $S$.  We reprove the estimates in \cite{huisken-sinestrari:scalar-pinching}, and \cite{huisken-sinestrari:convex-pinching} for this class of flows.  These provide very direct, general pinching results for limit flows at singularities, and require no embeddedness or curvature assumptions.  We further prove the estimates in \cite{huisken:umbilic-pinching} when $S$ is the sphere.

Consider a smooth, embedded, oriented hypersurface $S \subset R^{n+1}$, with choice of normal $\nu_S$, having bounded geometry and positive inscribed radius.  We refer to $S$ as the \emph{barrier surface}.  If $\Sigma^n \subset R^{n+1}$ is a compact, mean-convex hypersurface with boundary, we say \emph{$\Sigma$ meets $S$ orthogonally} if $\partial \Sigma \subset S$, and the outer normal of $\partial\Sigma \subset \Sigma$ coincides with $\nu_S$.


Let $\Sigma_0 = \Sigma$ meet the barrier $S$ orthogonally.  Then the mean curvature flow of $\Sigma_0$, with free-boundary in $S$, is a family of immersions $F_t : \Sigma_0 \times [0, T) \to R^{n+1}$ such that
\begin{align*}
&\partial_t F_t = -H \nu , \text{ for all } p \in \Sigma, t > 0 \\
&\text{$F_t(\Sigma)$ meets $S$ orthogonally for all $t \geq 0$} \\
&F_0 \equiv \mathrm{Id}_{\Sigma_0} .
\end{align*}
Here $H$ is the mean curvature, and $\nu$ the outer normal, oriented so that $\mathbf{H} = -H \nu$ is the mean curvature vector.  We often write $\Sigma_t = F_t(\Sigma)$, and will identify and surface and its immersion.

It was shown by Stahl \cite{stahl:regularity} that the mean curvature flow with free-boundary in $S$ always exists on some maximal time interval $[0, T)$, for $T \leq \infty$, such that if $T < \infty$ then necessarily $\max_{\Sigma_t} |A| \to \infty$ as $t \to T$.  Here $|A|$ is the norm of the second fundamental form $A$.

Type-I tangent flows of mean curvature flow with free boundary have been classified by Buckland \cite{buckland}.  Our convexity estimates work towards classifying type-II limit flows with free boundary.  Stahl \cite{stahl:singularity} has shown Theorem \ref{theorem:umbilic-pinching} using a different method.

We prove the following theorems concerning the mean curvature flow of $\Sigma_0$ with free-boundary in $S$.  Throughout the duration of this paper we assume $\Sigma_0$ is compact, mean-convex.

\begin{theorem}\label{theorem:A-H-bound-intro}
There are constants $\alpha = \alpha(S) \geq 0$ and $C = C(S, \Sigma_0)$ so that
\begin{equation}
\max_{\Sigma_t} \frac{|A|}{H} \leq C e^{\alpha t}
\end{equation}
for all time of existence.  In particular, if $T < \infty$, then
\[
|A| \leq C(S, \Sigma_0, T) H.
\]
\end{theorem}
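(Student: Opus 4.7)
The plan is to bound the scalar quotient $f = |A|^2/H^2$, which is smooth wherever $H > 0$, via a parabolic maximum principle on $\Sigma_t$. In the interior, the classical Huisken--Sinestrari computation of \cite{huisken-sinestrari:scalar-pinching}, applied to $\partial_t H = \Delta H + |A|^2 H$ and $\partial_t |A|^2 = \Delta|A|^2 - 2|\nabla A|^2 + 2|A|^4$, yields
\[
\partial_t f = \Delta f + \frac{2}{H}\langle \nabla H, \nabla f\rangle - \frac{2}{H^4}\bigl(H^2|\nabla A|^2 - |A|^2 |\nabla H|^2\bigr).
\]
The last term is non-positive at an interior maximum of $f$, using the algebraic constraint $\nabla|A|^2 = f \nabla(H^2)$ at such a point combined with Huisken's refined gradient estimate. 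Consequently, in the closed mean-convex case, $\max f$ is non-increasing and the theorem holds with $\alpha = 0$.

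For the free boundary flow the interior inequality is unchanged; the new ingredient is the behaviour of $f$ on $\partial\Sigma_t \subset S$. Write $\eta$ for the outer unit conormal of $\partial\Sigma_t$ in $\Sigma_t$. The orthogonality hypothesis forces $\eta = \nu_S$ and $\nu \in T_pS$ along $\partial\Sigma_t$. Differentiating these identities along $\partial\Sigma_t$ and combining with the Weingarten equations for $S$, as carried out by Stahl \cite{stahl:regularity}, produces algebraic formulas expressing the normal derivative $\nabla_\eta A$ (and hence $\nabla_\eta |A|^2$ and $\nabla_\eta H$) on $\partial\Sigma_t$ in terms of $A$ itself and the second fundamental form $A^S$ of the barrier. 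Using the bounded geometry of $S$ to bound $|A^S|_{C^1}$, I expect these identities to assemble into a pointwise Neumann-type bound
\[
\nabla_\eta f \leq C(S)\, f \quad \text{on } \partial\Sigma_t.
\]

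The interior evolution together with this oblique boundary inequality then fits into a standard parabolic maximum principle with oblique boundary condition: choosing $\alpha = \alpha(S)$ at least as large as the boundary constant, the modified quantity $e^{-2\alpha t} f$ satisfies the interior inequality with an extra dissipative zero-order term $-2\alpha\, e^{-2\alpha t} f$ and has non-positive outward conormal derivative on $\partial\Sigma_t$. By the Hopf lemma its spatial maximum cannot increase in time, so $\max_{\Sigma_t} f \leq C(\Sigma_0) e^{2\alpha t}$. Taking square roots gives the first assertion; the second follows by absorbing $e^{\alpha T}$ into $C$.

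The main technical obstacle will be the boundary computation. Differentiating the orthogonality condition produces cross terms involving the mixed components of $A$ (such as $A(\eta, X)$ for $X$ tangent to $\partial\Sigma_t$) coupled with components of $A^S$, and the risk is that these appear as corrections of order $|A|/H$, which would fail to close up a Gronwall-type argument. To obtain a bound genuinely linear in $f$ one must decompose $A$ relative to the splitting $T\Sigma_t|_{\partial\Sigma_t} = T\partial\Sigma_t \oplus \mathbb{R}\eta$ and use the orthogonality relation itself to convert transverse derivatives of the immersion into tangential derivatives in $S$. Isolating the right cancellations so that the resulting boundary term depends only on the geometry of $S$, and not on the curvature of $\Sigma_t$, is the heart of the argument.
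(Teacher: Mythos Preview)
Your boundary computation does not close, and this is precisely the obstruction the paper is built around. In the splitting $T\Sigma_t|_{\partial\Sigma_t} = T\partial\Sigma_t \oplus \mathbb{R}\eta$, the mixed components $h_{\eta X}$ ($X \in T\partial\Sigma_t$) satisfy $h_{\eta X} = -k_{\nu X} = O(1)$ (Proposition~\ref{prop:general-evolution-equations} ff.), but when you expand $N|A|^2 = 2\sum h_{ij}\nabla_\eta h_{ij}$ the cross terms $h_{\eta X}\nabla_\eta h_{\eta X}$ appear, and by Codazzi $\nabla_\eta h_{\eta X} = \nabla_X h_{\eta\eta}$ is a \emph{tangential} derivative of $A$. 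This is of order $|\nabla A|$, not $|A|^2$, so no inequality of the form $\nabla_\eta f \leq C(S)f$ is available for the unperturbed quotient. The cancellations you hope for do not occur unless $S$ is totally geodesic.

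The paper's remedy is to replace $A$ by the perturbed form $\bar A = A + T(\cdot,\cdot,\nu) + D_0 g$, designed so that $\bar h_{\eta X} \equiv 0$ on $\partial\Sigma_t$; then the bad cross terms vanish and one does get $N|\bar A| = O(|\bar A|)$. But this perturbation pollutes the evolution equations with $O(|\bar A|^2)$ errors (Theorem~\ref{thm:A-H-evolution}), which are too large for your interior argument to survive. The paper handles this by considering $\frac{|\bar A| + a}{H}\phi$ with a spatial barrier $\phi = e^{-\alpha t - 2bd}$ and an additive constant $a$, and by invoking a refined Kato inequality (Proposition~\ref{prop:A-H-gradient-term}) that produces a strictly favorable gradient term on the region $|\bar A| > 2\bar H$ to absorb the gradient error coming from $\phi$.

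A secondary point: even granting your hoped-for boundary inequality $\nabla_\eta f \leq C(S)f$, multiplying by $e^{-2\alpha t}$ does nothing to the conormal derivative, since this factor is spatially constant. To convert a one-sided Neumann bound into a genuine sign condition for the Hopf lemma you need a spatial weight with $\nabla_\eta \phi \leq -C\phi$; this is the role of the $e^{-2bd}$ factor in the paper.
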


\begin{definition}
Given a vector $\mu \in R^n$, and $k \in \{1, \ldots, n\}$, we let
\begin{equation*}
s_k(\mu) = \sum_{1\leq i_1 < \ldots < i_k \leq n} \mu_{i_1} \cdots \mu_{i_k}
\end{equation*}
be the $k$-th symmetric polynomial of $\mu$.  We adopt the convention that $s_0 \equiv 1$.  If $s_{k-1}(\mu) \neq 0$, we let
\begin{equation*}
q_k(\mu) = \frac{s_k(\mu)}{s_{k-1}(\mu)} .
\end{equation*}

Given a real symmetric $n \times n$ matrix $M$, define $s_k(M) = s_k(\mu)$ where $\mu \in R^n$ is the vector of eigenvalues of $M$.  Similarly, where possible set $q_k(M) = q_k(\mu)$.  Notice that $s_k$ is a polynomial in the entries of $M$.

Given a surface $\Sigma$, define the smooth function $S_k$ by
\[
S_k(p) = s_k(A(p)) = s_k(\lambda(p))
\]
where $\lambda$ the vector of principle curvatures.  Similarly where possible set $Q_k = q_k(A)$.  We have that $H \equiv S_1$, and $|A|^2 \equiv S_1^2 - 2S_2$.
\end{definition}

\begin{theorem}[Convexity pinching]\label{theorem:convexity-pinching}
If $T < \infty$, then for any $k \in \{1, \ldots, n\}$, $\eta > 0$, there is a constant $C = C(S, \Sigma_0, T, \eta, n)$ such that
\[
S_k \geq -\eta H^k - C
\]
at all points in spacetime.
\end{theorem}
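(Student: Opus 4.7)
The plan is to adapt the Stampacchia iteration of Huisken--Sinestrari \cite{huisken-sinestrari:convex-pinching} to the free-boundary setting, using Theorem \ref{theorem:A-H-bound-intro} to control reaction terms and new identities at $\partial\Sigma_t \subset S$ to handle the boundary terms from integration by parts. I would proceed by induction on $k$: the $k = 1$ case is mean-convexity, and at stage $k$ the inductive claim is that for every $\eta > 0$ and every sufficiently small $\sigma = \sigma(k,\eta) > 0$, the function
\[
f_{k,\sigma,\eta} = \frac{-S_k + \eta H^k}{H^{k-\sigma}}
\]
is bounded above by a constant $C = C(S, \Sigma_0, T, \eta, \sigma, n)$ on spacetime. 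Sending $\sigma \to 0$ and absorbing the subleading term into $C$ yields the stated estimate.

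For the evolution step, using the standard algebraic identities for $S_k$ and $Q_k$ one derives
\[
\partial_t f_{k,\sigma,\eta} \leq \Delta f_{k,\sigma,\eta} + \frac{2(k-\sigma)}{H} \langle \nabla H, \nabla f_{k,\sigma,\eta}\rangle + C\sigma |A|^2 f_{k,\sigma,\eta} + (\text{lower order}),
\]
where the lower-order reaction terms are controlled by the inductive hypothesis at levels $j < k$ together with Newton-type inequalities among the $S_j$. The bound $|A| \leq C H$ from Theorem \ref{theorem:A-H-bound-intro} is essential: it guarantees that, as in the closed case, the sharp gradient term $|\nabla A|^2/H^2$ dominates the $\sigma |A|^2 f$ reaction once $\sigma$ is chosen small depending on $\eta, k$.

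For the Stampacchia step, multiply by $(f_{k,\sigma,\eta})_+^{p-1}$ and integrate over $\Sigma_t$. Integration by parts now produces a boundary integral
\[
\int_{\partial\Sigma_t} f_{k,\sigma,\eta}^{p-1} \nabla_\nu f_{k,\sigma,\eta} \, d\mathcal{H}^{n-1}.
\]
Using the free-boundary condition, Stahl's identity $\nabla_\nu h_{ij} = h^S(\nu,\nu) h_{ij}$ on $\partial\Sigma_t$ combined with Euler's identity for homogeneous polynomials gives $\nabla_\nu S_k = k h^S(\nu,\nu) S_k$ and $\nabla_\nu H = h^S(\nu,\nu) H$, from which a direct calculation yields the clean formula $\nabla_\nu f_{k,\sigma,\eta} = \sigma h^S(\nu,\nu) f_{k,\sigma,\eta}$ on $\partial\Sigma_t$. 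The boundary contribution is therefore $O(\sigma) \int_{\partial\Sigma_t} f_{k,\sigma,\eta}^p$, which is absorbed via a trace-type Michael--Simon Sobolev inequality adapted to hypersurfaces meeting $S$ orthogonally. The iteration then proceeds as in the closed case, yielding first an $L^p$ bound for $p$ large (depending on $\sigma$) and then, via a De Giorgi-type argument, the desired $L^\infty$ bound.

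The main obstacle is the boundary analysis: establishing the Stahl-type identity in enough generality to show $\nabla_\nu f_{k,\sigma,\eta}$ is proportional to $\sigma f_{k,\sigma,\eta}$ on $\partial\Sigma_t$, and proving a trace Michael--Simon inequality compatible with the bounded-geometry and positive inscribed-radius assumptions on $S$. Once these two ingredients are in place, the rest of the argument is a careful but essentially standard adaptation of Huisken--Sinestrari, with constants absorbing the exponent $\alpha(S)$ from Theorem \ref{theorem:A-H-bound-intro} and the curvature bounds of the barrier.
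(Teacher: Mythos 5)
Your high-level plan (induction on $k$, Stampacchia iteration, Michael--Simon trace Sobolev) matches the paper, but the central technical step — the boundary analysis — rests on an identity that is false for a general barrier $S$, and you also diverge from the paper in your choice of test function in a way that undermines the sharp gradient estimate.

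The claimed ``Stahl identity'' $\nabla_\nu h_{ij} = h^S(\nu,\nu)\,h_{ij}$ on $\partial\Sigma_t$ does not hold for a general embedded barrier. The correct boundary formula (Lemma \ref{lemma:boundary-derivatives} of the paper) is
\[
\nabla_N h_{ij} = h_{ij}\,k_{\nu\nu} + h_{11}\,k_{ij} - k_{i\alpha}h_{j\alpha} - k_{j\alpha}h_{i\alpha} - \nabla^S_\nu k_{ij},
\]
and also $h_{N,X} = -k_{\nu,X} \neq 0$ for tangential $X$ (Proposition \ref{prop:H-boundary-derivative} and the proposition after it). Your clean multiplicative relation holds essentially only when $S$ is a sphere or a plane, which is why Stahl's and Huisken's arguments do not extend directly. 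The paper's resolution is to perturb the second fundamental form to $\bar h_{ij} = h_{ij} + T_{ij\nu} + D_0 g_{ij}$ so that $\bar h_{N,X} = 0$ at the boundary, at the cost of introducing $O(|A|)$ and $O(|A|^2)$ error terms into every derivative, and then to control the boundary derivatives of the perturbed symmetric functions only up to order $O(|\tA|^l)$ (Theorem \ref{theorem:boundary-derivatives} and Corollary \ref{cor:tilde-S-boundary}) rather than a multiplicative identity. The resulting boundary flux $\int_{\partial\Sigma_t} f^{p-1}\tH^\sigma$ has to be absorbed via Lemma \ref{lemma:push-boundary-inside}, which is considerably more delicate than the $O(\sigma)\int f^p$ absorption you describe.

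A second issue is the test function. You propose $f = (-S_k - \eta H^k)/H^{k-\sigma}$, but the Huisken--Sinestrari mechanism for the sharp negative gradient term hinges on the concavity of the quotient $Q_{k+1} = S_{k+1}/S_k$ (HS Lemma 2.13, used in Lemma \ref{lemma:gradient-term} of the paper); that structure is not available for $S_k/H^{k-1}$. The paper therefore works with $f = (-\tQ_{k+1} - \eta\tH)/\tH^{1-\sigma}$ built from a \emph{twice}-perturbed second fundamental form $b_{ij} = h_{ij} + T_{ij\nu} + (\epsilon H + D)g_{ij}$, where the extra $\epsilon H + D$ shift, together with the inductive hypothesis and HS Lemma 2.7, guarantees $\tS_k \geq c\epsilon^{k-1}\tH^k > 0$ so that $\tQ_{k+1}$ is defined and the concavity estimates apply. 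Without that positivity and without the quotient structure, the POINCARE-LIKE inequality (Lemma \ref{lemma:convex-POINCARE-LIKE}) and the gradient term in the EVOLUTION-LIKE inequality (Lemma \ref{lemma:convex-EVOLUTION-LIKE}) would not go through. You would need to supply these ingredients, or change your test function, before the iteration could close.
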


For $T < \infty$, by rescaling $\Sigma_t$ along an essential blow-up sequence (c.f. Section 4 of \cite{huisken-sinestrari:scalar-pinching}), we obtain an eternal limit flow $\tilde \Sigma_\tau$ with free boundary in a hyperplane.  This can be reflected to a mean curvature flow without boundary.  Theorem 4.1 of \cite{huisken-sinestrari:convex-pinching} therefore proves the following Corollary of Theorem \ref{theorem:convexity-pinching}.

\begin{corollary}
If $T < \infty$, then any limit flow of $\Sigma_t$ at a type-II singularity is a weakly convex flow $\tilde \Sigma_\tau$ with free boundary in a hyperplane.  After reflection to a flow without boundary, $\tilde\Sigma_\tau$ is a convex translating soliton.  Further, we can write $\tilde \Sigma_\tau = R^{n-k} \times \tilde \Sigma^k_\tau$, where $\tilde\Sigma^k_\tau$ is strictly convex.
\end{corollary}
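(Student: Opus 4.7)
The plan is to implement the type-II blow-up procedure sketched in the paragraph above the corollary. Along an essential blow-up sequence $(p_j, t_j)$ with $L_j := |A|(p_j, t_j) \to \infty$, I would parabolically rescale $\Sigma_t$ by $L_j$ about $(p_j, t_j)$; the barrier simultaneously rescales to $\tilde S_j = L_j(S - F_{t_j}(p_j))$, and because $S$ has bounded geometry and positive inscribed radius, $\tilde S_j \to P$ smoothly on compact sets for some hyperplane $P$ through the origin. The $|A|/H$ bound of Theorem \ref{theorem:A-H-bound-intro}, together with Stahl's boundary regularity \cite{stahl:regularity} and the essential-sequence choice, yield smooth subsequential convergence of the rescaled flows to a nontrivial eternal flow $\tilde\Sigma_\tau$ with free boundary in $P$.

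To prove weak convexity I would pass the pinching estimate of Theorem \ref{theorem:convexity-pinching} to the limit. Under $A \mapsto L_j^{-1} A$ the elementary symmetric polynomials scale as $S_k \mapsto L_j^{-k} S_k$, so on the $j$-th rescaled flow the inequality reads $S_k \geq -\eta H^k - C L_j^{-k}$. Letting $j \to \infty$ and then $\eta \to 0$ yields $S_k \geq 0$ on $\tilde\Sigma_\tau$ for every $k \in \{1,\ldots,n\}$. Since the characteristic polynomial of $A$ satisfies $\prod_i(t+\lambda_i) = \sum_{k=0}^n S_k\, t^{n-k}$, non-negativity of all $S_k$ forces this polynomial to be positive for $t>0$, so none of the roots $-\lambda_i$ is positive and every principal curvature satisfies $\lambda_i \geq 0$. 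This gives the first claim.

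To conclude, reflect $\tilde\Sigma_\tau$ across $P$. The orthogonality of the free-boundary condition ensures the reflection is a smooth, boundaryless, weakly convex mean curvature flow, still eternal, with uniformly bounded curvature attaining its maximum at the spacetime origin. Hamilton's rigidity theorem for weakly convex eternal flows (in the form invoked in \cite{huisken-sinestrari:convex-pinching}) then identifies the reflected flow as a convex translating soliton, and Theorem~4.1 of \cite{huisken-sinestrari:convex-pinching} supplies the splitting $R^{n-k} \times \tilde\Sigma^k_\tau$ with $\tilde\Sigma^k_\tau$ strictly convex; since $P$ is reflection-invariant, the splitting descends to the original free-boundary flow. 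The main delicacy I expect is confirming that the reflection across $P$ yields a genuinely $C^\infty$ flow rather than merely a $C^{1,1}$ one --- this is a standard parabolic reflection argument using the Neumann (orthogonality) boundary condition, but it is the only step not directly handed to us by the cited results.
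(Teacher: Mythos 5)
Your proposal is correct and follows essentially the same route as the paper, which itself just sketches the argument in one paragraph: rescale along an essential blow-up sequence as in Section~4 of Huisken--Sinestrari so the barrier limits to a hyperplane, pass the pinching estimate of Theorem~\ref{theorem:convexity-pinching} to the limit, reflect across the hyperplane, and invoke Theorem~4.1 of \cite{huisken-sinestrari:convex-pinching} (which itself rests on Hamilton's rigidity) for the convex translator and splitting structure. Your additional details --- the scaling computation $S_k \geq -\eta H^k - CL_j^{-k}$, deducing $\lambda_i \geq 0$ from non-negativity of all $S_k$ via the characteristic polynomial, and the note that reflection-invariance lets the splitting descend --- are all sound and are implicitly what the cited references supply.
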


\begin{remark}
By our assumptions on $S$, $\Sigma_t$ can only move a finite distance in finite time (see Proposition \ref{prop:finite-dist}).  In other words, if $T < \infty$, $\Sigma_t$ approaches a set-theoretic limit.  One can probably construct examples with free boundary in a barrier of unbounded geometry/zero inscribed radius which shoot off to infinity in finite time.

If $T = \infty$ and $\Sigma_t$ stays in a bounded region, then either $|\Sigma_t| \to 0$ or by a standard argument $\Sigma_t$ approaches a minimal surface.
\end{remark}

\begin{remark}
Theorems \ref{theorem:A-H-bound-intro} and \ref{theorem:convexity-pinching} also hold in a Riemannian manifold of bounded geometry.  In fact the error terms introduced are almost entirely subsumed by the perturbations we already make.
\end{remark}

\begin{theorem}[Umbilic pinching, \cite{stahl:singularity}]\label{theorem:umbilic-pinching}
If $\Sigma_0$ is convex and $S = S^n$, then $\Sigma_t$ shrinks to a point in finite time, and any limit flow at the singularity is umbilic.  In particular, there is a sequence of rescalings which converge to a shrinking half-sphere with free-boundary in a hyperplane.
\end{theorem}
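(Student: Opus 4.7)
The plan is to adapt Huisken's umbilic-pinching argument \cite{huisken:umbilic-pinching} for closed convex hypersurfaces to the free-boundary setting, using the umbilicity of $S = S^n$ to absorb the boundary terms in each maximum-principle argument. The classification of the limit flow is then read off by combining the pinching estimate with Theorem \ref{theorem:A-H-bound-intro} and the corollary to Theorem \ref{theorem:convexity-pinching}.

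I would first show that $\Sigma_t$ remains convex for all $t$ and that $T < \infty$. Convexity is preserved by a Hopf-type tensor maximum principle: in the interior this is Hamilton's argument applied to the evolution of $A$; at $\partial \Sigma_t$, orthogonality together with $A^S = g_S$ forces $\nu_S$ to be a principal direction of $A^{\Sigma_t}|_{\partial\Sigma_t}$ and expresses $\nabla_{\nu_S} A$ algebraically in $A$ itself, which rules out a new negative eigenvalue at the boundary. Given $A \geq 0$, the ODE $\tfrac{d}{dt} H_{\min} \geq H_{\min}^3/n$ forces $T < \infty$ and $H \to \infty$.

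The core of the proof is a Huisken pinching estimate: for sufficiently small $\sigma > 0$,
\begin{equation*}
f_\sigma := (|A|^2 - H^2/n)/H^{2-\sigma} \leq C \quad \text{on } \Sigma_t \times [0, T) .
\end{equation*}
The interior evolution inequality
\begin{equation*}
\partial_t f_\sigma \leq \Delta f_\sigma + \frac{2(1-\sigma)}{H}\langle \nabla H, \nabla f_\sigma\rangle + \sigma |A|^2 f_\sigma
\end{equation*}
is identical to that of \cite{huisken:umbilic-pinching}, and the Stampacchia $L^p$-iteration goes through verbatim (using convexity and Theorem \ref{theorem:A-H-bound-intro} to bound the reaction term $|A|^2 f_\sigma$) \emph{provided} one can show the boundary integral $\int_{\partial \Sigma_t} f_\sigma^{p-1}\nabla_{\nu_S} f_\sigma \leq 0$. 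Verifying this is the main obstacle. Reducing to a pointwise identity, orthogonality plus umbilicity of $S^n$ make $A^{\Sigma_t}|_{\partial \Sigma_t}$ block-diagonal with $\nu_S$ as one principal axis, and the Codazzi equations combined with $A^S \propto g_S$ force each $\nu_S$-derivative of a component of $A$ at the boundary to be an algebraic expression in $A$ alone. Substituting these identities, the $\nu_S$-derivatives of the numerator and denominator of $f_\sigma$ cancel and give $\nabla_{\nu_S} f_\sigma \leq 0$ pointwise on $\partial \Sigma_t$. For any non-umbilic barrier, off-diagonal components of $A^S$ introduce uncontrolled boundary terms that obstruct this cancellation, which is precisely why the theorem requires $S = S^n$.

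Once $f_\sigma \leq C$ is established, $|A|^2/H^2 \to 1/n$ along any blow-up sequence, so every limit flow at the singularity is umbilic. A type-I parabolic rescaling at the singular point of $\Sigma_t$ then yields an umbilic, mean-convex, self-shrinking flow with free boundary in a hyperplane; classical rigidity identifies this flow as the shrinking half-sphere, giving the last statement of the theorem.
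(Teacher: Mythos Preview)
Your overall strategy is right, but the key boundary claim is false, and this is precisely where the paper's proof diverges from what you propose.

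You assert that the umbilicity of $S^n$ forces $\nabla_{\nu_S} f_\sigma \leq 0$ pointwise on $\partial\Sigma_t$.  Carrying out the computation with $k_{ij} = \delta_{ij}$ in Lemma~\ref{lemma:boundary-derivatives} gives, writing $\lambda_N = h_{NN}$,
\[
\tfrac{1}{2}\,N\!\left(|A|^2 - \tfrac{1}{n}H^2\right)
= 3H\lambda_N - n\lambda_N^2 - |A|^2 - \tfrac{1}{n}H^2,
\]
and after the algebra one finds only
\[
N\!\left(|A|^2 - \tfrac{1}{n}H^2\right) \leq -\tfrac{2}{n-1}\left(|A|^2 - \tfrac{1}{n}H^2\right) + c_n\,H\sqrt{|A|^2 - \tfrac{1}{n}H^2}.
\]
Combined with $NH = H$ this yields
\[
N f_\sigma \leq -\big(\tfrac{2}{n-1} + 2 - \sigma\big) f_\sigma + c_n\,H^{\sigma/2}\sqrt{f_\sigma},
\]
which has the wrong sign when $f_\sigma$ is small relative to $H^\sigma$.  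So the boundary integral $\int_{\partial\Sigma_t} f_\sigma^{p-1}\,N f_\sigma$ does not drop out; all one gets is $|N f_\sigma| \leq c\,H^\sigma$.  The paper handles this by feeding that weaker bound into the general Stampacchia machine of Section~\ref{section:stampacchia}: the trace formula (Lemma~\ref{lemma:trace-formula}) converts $\int_{\partial\Sigma_t} f_\sigma^{p-1} H^\sigma$ into interior terms, which are then absorbed using the good gradient term $-\epsilon^2 |\nabla H|^2 / H^{2-\sigma}$ in the evolution of $f_\sigma$ (a term you dropped from your stated inequality) together with the Poincar\'e-like inequality of Lemma~\ref{lemma:umbilic-integrate-laplace}.

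A second gap: you omit the gradient estimate $|\nabla H|^2 \leq \eta H^4 + C$ (Theorem~\ref{theorem:umbilic-gradient-estimate}).  In Huisken's scheme this is what upgrades the pinching $|A|^2 - H^2/n \leq C H^{2-\sigma}$ to actual convergence of rescalings; without it your last paragraph does not close.  The paper proves this estimate separately, and here too the boundary condition fails to have a sign: one cannot control $N|\nabla H|^2$, so the paper replaces $|\nabla H|^2$ by $|\nabla H - HV|^2$ for a suitable ambient vector field $V$ extending $\nu_S$, which does satisfy $N|\nabla H - HV|^2 = 0$ on $\partial\Sigma_t$.
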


The following notation is used extensively.

\begin{definition}
Writing $f = O(g)$ means there is a constant $c = c(n, S)$ such that $|f| \leq c |g|$.
\end{definition}

We outline our approach.  The main obstruction to analyzing free boundary behavior in a general barrier $S$ is obtaining boundary conditions on $|A|$, or $S_k$ when $k > 1$.  We perturb the second fundamental form so that the normal $\nu_S$ is an eigenvector, which allows us to obtain boundary conditions on the perturbed principle curvatures.

This introduces relatively large error terms into the evolution equations of the perturbed $|\bar A|$ and $\bar H$.  The error is too large to naively give exponential behavior of the quantity $|\bar A|/H$.  To handle this, and to correct the boundary behavior, in proving Theorem \ref{theorem:A-H-bound-intro} we must consider instead the evolution of
\begin{equation}\label{eqn:fixed-evolution}
\frac{|\bar A| + a}{H} \phi ,
\end{equation}
for some large constant $a$, and barrier function $\phi$.

The evolution equation for \eqref{eqn:fixed-evolution} will have the right form except for a gradient term resulting from $\phi$.  To control bad gradient terms we observe that by restricting to points where $|\bar A| \geq 2\bar H$, we can squeeze a term out of Cauchy's inequality:
\[
|\nabla \bar A|^2 - |\nabla |\bar A||^ 2\geq \frac{1}{c} |\nabla \bar A|^2 + O(|\bar A|^2) .
\]

Given Theorem \ref{theorem:A-H-bound-intro}, we can adapt the Stampacchia iteration scheme used by \cite{huisken-sinestrari:convex-pinching} to prove Theorems \ref{theorem:convexity-pinching} and \ref{theorem:umbilic-pinching}.  The key step is proving a trace-like formula for free boundary surfaces.  The argument is sufficiently robust to handle without problem the perturbation terms.

I am very grateful to my advisor Simon Brendle for his guidance and encouragement, Brian White for many illuminating discussions, and Otis Chodosh for his support and advice.  I also thank Robert Haslhofer and Gerhard Huisken for helpful conversations.  This work was partially supported by the Royden fellowship.  Some of this work was also completed while visiting Columbia University, and I'm grateful for their hospitality.

\section{Michael-Simon with (free) boundary}
\label{section:michael-simon}

We adapt the Michael-Simon inequality \cite{michael-simon} to surfaces with smooth boundary, and surfaces meeting a barrier surface orthogonally.

\begin{lemma}\label{lemma:trace-formula}
There is a constant $c = c(n, S)$ such that for any $\Sigma$ meeting $S$ orthogonally, and any $v \in C^1(\bar\Sigma)$, 
\begin{equation}
\frac{1}{c} \int_{\partial\Sigma} |v| \leq \int_\Sigma |\nabla v| + \int_\Sigma |Hv| + \int_\Sigma |v| .
\end{equation}
\end{lemma}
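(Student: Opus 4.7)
The plan is to derive the estimate from the first variation formula on $\Sigma$ applied to a test vector field $X$ engineered so that (a) $\langle X, \eta\rangle$ recovers $|v|$ along $\partial\Sigma$, and (b) the tangential divergence of $X$ is controlled linearly in $|v|$ and $|\nabla v|$. The orthogonality hypothesis $\eta = \nu_S$ along $\partial\Sigma$ is what makes this natural: we only need a single extension of $\nu_S$ to $R^{n+1}$, not a construction adapted to each particular $\Sigma$.

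First I would exploit the hypotheses on $S$ to build the extension. Bounded geometry and positive inscribed radius yield a uniform tubular neighborhood $U = \{\dist(\cdot, S) < r_0\}$ on which the nearest-point projection $\pi_S$ is smooth with uniformly bounded first derivatives. Fixing a cutoff $\psi \in C^\infty_c([0, r_0))$ with $\psi(0) = 1$, set
\[
N(x) = \psi(\dist(x, S)) \, \nu_S(\pi_S(x))
\]
for $x \in U$, extended by zero. By construction $N|_S = \nu_S$ and $|N|, |\nabla N| \leq c(n, S)$ globally on $R^{n+1}$.

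Next, to sidestep non-smoothness of $|v|$ at its zero set, regularize by $f_\epsilon = \sqrt{v^2 + \epsilon^2} - \epsilon$, which is smooth and nonnegative, satisfies $|\nabla f_\epsilon| \leq |\nabla v|$, and converges uniformly to $|v|$. Apply the first variation identity on $\Sigma$ to $X = f_\epsilon N$:
\[
\int_{\partial\Sigma} \langle X, \eta\rangle = \int_\Sigma \mathrm{div}_\Sigma X + \int_\Sigma \langle X, \mathbf{H}\rangle .
\]
Orthogonality gives $\langle X, \eta\rangle = f_\epsilon$ along $\partial\Sigma$; expanding $\mathrm{div}_\Sigma(f_\epsilon N) = f_\epsilon \, \mathrm{div}_\Sigma N + \langle \nabla_\Sigma f_\epsilon, N\rangle$ and using $|N|, |\nabla N| = O(1)$ together with $\mathbf{H} = -H\nu$ bounds the right-hand side by $c\int_\Sigma f_\epsilon + \int_\Sigma |\nabla v| + \int_\Sigma |Hv|$. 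Letting $\epsilon \to 0$ and absorbing the constant concludes the proof.

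The only real obstacle is step two: constructing $N$ with a uniform $C^1$ bound that depends only on $S$. This is precisely where the hypotheses on $S$ enter — the positive inscribed radius controls the size of a tubular neighborhood in which $\pi_S$ is well-defined, and the bounded second fundamental form of $S$ controls $|\nabla \pi_S|$ and hence $|\nabla N|$. Given $N$, the rest of the argument is a straightforward bookkeeping with the first variation formula, and none of it sees the flow or the regularity of $\Sigma$ beyond $C^2$.
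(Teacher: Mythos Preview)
Your proof is correct and follows essentially the same route as the paper: extend $\nu_S$ to a compactly supported $C^1$ vector field on $R^{n+1}$, multiply by $|v|$, and apply the divergence theorem / first variation formula on $\Sigma$. The paper simply asserts the existence of such an extension $X$ and works directly with $|v|$ (using that $|v|$ is Lipschitz), whereas you construct the extension explicitly via the tubular neighborhood and regularize $|v|$ by $f_\epsilon$; these are cosmetic differences only.
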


\begin{proof}
Choose (and fix, for the duration of the paper) a smooth vector field $X$ on $R^{n+1}$ which is $0$ outside a neighborhood of $S$, and $X \equiv \nu_S$ on $S$.  Then
\begin{align*}
\int_{\partial\Sigma} |v|
&= \int_{\partial\Sigma} |v| X \cdot \nu \\
&= \int_\Sigma \vecdiv_\Sigma(|v| X^T) \\
&= \int_\Sigma \nabla |v| \cdot X + |v| \vecdiv_\Sigma(X) - |v| X \cdot \nu H \\
&\leq \max |X| \int |\nabla v| + n\max |\nabla X| \int |v| + \max |X| \int |v H| \qedhere
\end{align*}
\end{proof}

\begin{theorem}\label{theorem:micheal-simon-boundary}
There is a constant $c = c(n)$ such that for any $v \in C^1_c(\bar\Sigma)$, we have
\begin{equation}
\frac{1}{c} \left( \int_\Sigma |v|^{\frac{n}{n-1}} \right)^{\frac{n-1}{n}} \leq \int_\Sigma |\nabla v| + \int_\Sigma |Hv| + \int_{\partial\Sigma} |v| .
\end{equation}
\end{theorem}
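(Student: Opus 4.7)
The plan is to reduce to the classical Michael-Simon Sobolev inequality \cite{michael-simon}, which requires $v$ to be compactly supported in the open interior $\Sigma^\circ$, by a cutoff argument near $\partial\Sigma$; the boundary term $\int_{\partial\Sigma}|v|$ then emerges via the coarea formula in the limit as the cutoff shrinks. The trace formula of Lemma \ref{lemma:trace-formula} is not needed for this theorem, which is a statement about general smooth hypersurfaces with smooth boundary.

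First, I would take $v\geq 0$ (regularizing $|v|$ if needed) and let $d$ denote the intrinsic distance to $\partial\Sigma$; since $\partial\Sigma$ is smooth, $d$ is smooth with $|\nabla d|\equiv 1$ in some tubular neighborhood of $\partial\Sigma$. Choose smooth cutoffs $\phi_\epsilon:[0,\infty)\to[0,1]$ equal to $0$ on $[0,\epsilon/2]$, equal to $1$ on $[\epsilon,\infty)$, and satisfying $|\phi_\epsilon'|\leq 3/\epsilon$, and set $v_\epsilon := \phi_\epsilon(d)\,v$. Then $v_\epsilon\in C^1_c(\Sigma^\circ)$, so the classical Michael-Simon inequality yields
\[
\frac{1}{c}\left(\int_\Sigma v_\epsilon^{n/(n-1)}\right)^{(n-1)/n} \leq \int_\Sigma |\nabla v_\epsilon| + \int_\Sigma H v_\epsilon.
\]

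Next I would track the extra term produced by differentiating the cutoff. From $|\nabla v_\epsilon|\leq |\nabla v| + v\,|\phi_\epsilon'(d)|\,|\nabla d|$ and the coarea formula applied to $d$,
\[
\int_\Sigma v\,|\phi_\epsilon'(d)|\,|\nabla d| \;=\; \int_0^\infty |\phi_\epsilon'(t)|\left(\int_{\{d=t\}} v\right) dt.
\]
The measures $|\phi_\epsilon'(t)|\,dt$ are probability measures concentrating at $t=0$, while by continuity of $v$ the slice integrals converge to $\int_{\partial\Sigma} v$ as $t\to 0$, so the extra term tends to $\int_{\partial\Sigma} v$. Dominated convergence ($v_\epsilon\nearrow v$ with $v_\epsilon\leq v$) handles the remaining integrals, and sending $\epsilon\to 0$ produces the desired inequality.

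The main technical point is the coarea convergence of slices to $\partial\Sigma$. This reduces to the observation that, since $\partial\Sigma$ is smooth, one can parametrize a tubular neighborhood by $(p,s)\in\partial\Sigma\times[0,\delta)$ with $s=d(\cdot)$ and induced Jacobian tending to $1$ uniformly as $s\to 0$; the convergence of slice integrals to the boundary integral is then routine. No use is made of the barrier $S$ or the orthogonality condition.
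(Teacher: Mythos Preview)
Your proof is correct and follows essentially the same strategy as the paper: cut off near $\partial\Sigma$, apply the classical Michael--Simon inequality to the compactly supported function, and show that the extra gradient term from the cutoff converges to $\int_{\partial\Sigma}|v|$ as $\epsilon\to 0$. The only difference is packaging: you invoke the coarea formula to handle the extra term, whereas the paper writes out the Fermi-coordinate estimate $\int_{\{d\leq\epsilon\}} v \leq 2\epsilon\int_{\partial\Sigma} v + \epsilon^2|\partial\Sigma|\sup|\nabla v|$ by hand and then divides by $\epsilon$; both amount to the same tubular-neighborhood computation you describe in your final paragraph.
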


\begin{proof}
By replacing $v$ with $|v|$ we can without loss of generality suppose $v \geq 0$.

For $x \in \partial\Sigma$, let $\gamma_x(t)$ be the unit speed geodesic in $\Sigma$ with initial conditions $\gamma_x(0) = x$ and $\gamma'_x(0) \perp \partial\Sigma$.  For sufficiently small $\epsilon$, depending only on the curvatures of $\Sigma$ and $\partial\Sigma \subset \Sigma$, the function $\phi : [0, \epsilon] \times \partial \Sigma \to \Sigma$ mapping $(t, x) \mapsto \gamma_x(t)$ is a diffeomorphism, with its Jacobian bounded like $|J\phi| \in [\frac{1}{2}, 2]$.

We deduce, for any $\epsilon$ sufficiently small, 
\begin{align}
\int_{\dist(\cdot, \partial\Sigma) \leq \epsilon} v 
&= \int_0^\epsilon \int_{\partial\Sigma} v |J\phi| \nonumber \\
&\leq 2 \int_0^\epsilon \int_{\partial \Sigma} v(0, x) + 2 \int_0^\epsilon \int_{\partial\Sigma} t \frac{\partial v}{\partial t}(t^*(x), x) \nonumber \\
&\leq 2\epsilon \int_{\partial\Sigma} v + \epsilon^2 |\partial\Sigma| \sup_{\Sigma} |\nabla v| \label{eqn:ms-near-boundary}
\end{align} 
here $t^*(x) \in (0, \epsilon)$.

Now take $\eta$ a function which is $\equiv 1$ on $\dist(\cdot, \partial\Sigma) \geq \epsilon$ and $\equiv 0$ on $\partial \Sigma$, and such that $|\nabla \eta| \leq 2/\epsilon$.  From \eqref{eqn:ms-near-boundary}
\begin{align*}
\int_\Sigma ((1 - \eta) v)^{\frac{n}{n-1}} 
&\leq \int_{\dist(\cdot, \partial\Sigma) \leq \epsilon} v^{\frac{n}{n-1}} \\
&\leq 2\epsilon \int_{\partial \Sigma} v^{\frac{n}{n-1}} + \epsilon^2 |\partial\Sigma| \sup_{\Sigma} |\nabla (v^\frac{n}{n-1})| \\
&\leq \epsilon C
\end{align*}
for $C$ independant of $\epsilon$.

Therefore, using the Michel-Simon inequality and \eqref{eqn:ms-near-boundary} again, 
\begin{align*}
||v||_{\frac{n}{n-1}} 
&\leq ||\eta v||_{\frac{n}{n-1}} + ||(1-\eta)v||_{\frac{n}{n-1}} \\
&\leq c\int_\Sigma \eta |\nabla v| + c\int_\Sigma |H|\eta v + c\int_\Sigma |\nabla \eta| v + \epsilon^{\frac{n-1}{n}} C \\
&\leq c\int_\Sigma |\nabla v| + c\int_\Sigma |H|v + 2c/\epsilon \int_{\dist(\cdot, \partial\Sigma) \leq \epsilon} v + \epsilon^{1/2} C \\
&\leq c\int_\Sigma |\nabla v| + c\int_\Sigma |H|v + 4c \int_{\partial\Sigma} v + \epsilon |\partial\Sigma| \sup_{\Sigma} |\nabla v|  + \epsilon^{1/2} C
\end{align*}
for $c = c(n)$ and all $\epsilon$ sufficiently small.  Taking $\epsilon$ to $0$ proves the lemma.
\end{proof}

\begin{theorem}\label{theorem:micheal-simon-free-boundary}
If $\Sigma$ meets $S$ orthogonally, and $v \in C^1(\bar\Sigma)$, then for any $p < n$,
\begin{equation}
||v||_{\frac{np}{n - p}; \Sigma} \leq c(||\nabla v||_{p;\Sigma} + ||Hv||_{p;\Sigma} + ||v||_{p;\Sigma})
\end{equation}
where $c = c(n, p, S)$.
\end{theorem}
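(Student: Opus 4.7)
The plan is to bootstrap from the $p=1$ free boundary Sobolev inequality by applying it to $v^q$ for an appropriate power $q>1$, then absorbing the resulting $L^{np/(n-p)}$ factors via Hölder. This is the standard Moser-style trick but carried out in one step.

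First I would assemble the $p=1$ case. Combining Theorem \ref{theorem:micheal-simon-boundary} with Lemma \ref{lemma:trace-formula} (applied to the boundary integral it produces), we get, for any $w \in C^1(\bar\Sigma)$ on a surface meeting $S$ orthogonally,
\[
\|w\|_{n/(n-1)} \leq c\bigl(\|\nabla w\|_1 + \|Hw\|_1 + \|w\|_1\bigr),
\]
with $c = c(n,S)$. This is the base inequality.

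Now for $1 < p < n$, I would assume $v \geq 0$ (otherwise replace $v$ by $|v|$ using that $\Sigma$ is compact and $v \in C^1$ so all $L^r$ norms are finite; $|v|$ is Lipschitz and the inequality passes via approximation) and set $q = (n-1)p/(n-p)$, so that $q > 1$ and $w := v^q$ is $C^1$ with $w^{n/(n-1)} = v^{np/(n-p)}$. The base inequality applied to $w$ then reads
\[
\|v\|_{np/(n-p)}^{q} \leq c\bigl( q\, \|v^{q-1}\nabla v\|_1 + \|H v^{q}\|_1 + \|v^{q}\|_1 \bigr).
\]
A direct check gives the key identity $(q-1)\cdot p/(p-1) = np/(n-p)$. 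Applying Hölder with conjugate exponents $p$ and $p/(p-1)$ to each right-hand term—against $|\nabla v|$, $|Hv|$, and $v$ respectively, leaving a factor of $v^{q-1}$—each integral is bounded by $\|v\|_{np/(n-p)}^{q-1}$ times the appropriate $L^p$ norm. Thus
\[
\|v\|_{np/(n-p)}^{q} \leq c\, q \, \|v\|_{np/(n-p)}^{q-1}\bigl(\|\nabla v\|_p + \|Hv\|_p + \|v\|_p\bigr).
\]

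Dividing by $\|v\|_{np/(n-p)}^{q-1}$ (trivial if zero, finite since $\Sigma$ is compact and $v\in C^1$) yields the claimed inequality with $c = c(n,p,S)$. There is no serious obstacle: the only points to handle carefully are verifying the exponent arithmetic $(q-1)p/(p-1)=np/(n-p)$, ensuring $v^q$ is $C^1$ (true for $q>1$, $v\geq 0$), and noting that the constant $c(n,S)$ from Lemma \ref{lemma:trace-formula} together with $q = q(n,p)$ produces the asserted dependence $c(n,p,S)$.
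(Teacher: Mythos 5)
Your proof is correct and is essentially identical to the paper's: the paper also combines Lemma \ref{lemma:trace-formula} with Theorem \ref{theorem:micheal-simon-boundary} for the $p=1$ base case, then applies it to $w^\gamma$ with $\gamma$ chosen so that $\gamma\frac{n}{n-1} = (\gamma-1)\frac{p}{p-1}$, which is exactly your $q = (n-1)p/(n-p)$. The only presentational difference is that you write out the exponent arithmetic explicitly and note the division step; the substance is the same.
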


\begin{proof}
Combine Lemma \ref{lemma:trace-formula} and Theorem \ref{theorem:micheal-simon-boundary} to obtain the desired inequality with $p = 1$.  Then set $v = w^\gamma$ to obtain
\begin{align*}
\left( \int w^{\gamma \frac{n}{n-1}} \right)^{\frac{n-1}{n}} 
&\leq c\gamma \int w^{\gamma - 1} |\nabla w| + c \int w^{\gamma - 1} H w  + c \int w^{\gamma - 1} w \\
&\leq c \left( w^{(\gamma - 1) \frac{p}{p-1}} \right)^{\frac{p-1}{p}} ( ||\nabla w||_p + ||Hw||_p + ||w||_p).
\end{align*}
Now choose $\gamma$ such that
\[
\gamma \frac{n}{n-1} = (\gamma - 1) \frac{p}{p-1} \qedhere
\]
\end{proof}

\begin{corollary}\label{corollary:micheal-simon-n=2}
If $n = 2$, then for any $q \in (1, \infty)$, 
\begin{equation}
||v||_{2q; \Sigma} \leq c |\Sigma|^{\frac{1}{2q}} (||\nabla v||_{2; \Sigma} + ||Hv||_{2;\Sigma} + ||v||_{2;\Sigma})
\end{equation}
where $c = c(q, S)$.
\end{corollary}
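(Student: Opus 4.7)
The plan is to obtain the corollary by applying Theorem \ref{theorem:micheal-simon-free-boundary} with a carefully chosen exponent $p < n = 2$ so that the resulting Sobolev exponent on the left equals $2q$, and then converting the $L^p$ norms on the right to $L^2$ norms via Hölder's inequality, which is where the factor $|\Sigma|^{1/(2q)}$ appears.

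Concretely, I would set
\[
p = \frac{2q}{q+1}.
\]
Since $q > 1$, we have $p \in (1, 2)$, so Theorem \ref{theorem:micheal-simon-free-boundary} applies with $n = 2$. The Sobolev exponent becomes
\[
\frac{np}{n-p} = \frac{2p}{2-p} = 2q,
\]
so Theorem \ref{theorem:micheal-simon-free-boundary} directly gives
\[
\|v\|_{2q;\Sigma} \leq c\bigl(\|\nabla v\|_{p;\Sigma} + \|Hv\|_{p;\Sigma} + \|v\|_{p;\Sigma}\bigr).
\]

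Next, since $p < 2$, Hölder's inequality on $\Sigma$ yields, for any $f \in L^2(\Sigma)$,
\[
\|f\|_{p;\Sigma} \leq |\Sigma|^{\frac{1}{p} - \frac{1}{2}} \|f\|_{2;\Sigma} = |\Sigma|^{1/(2q)} \|f\|_{2;\Sigma},
\]
since $1/p - 1/2 = (q+1)/(2q) - 1/2 = 1/(2q)$. Applying this to $f = |\nabla v|$, $f = |Hv|$, and $f = |v|$ and substituting back produces exactly the claimed inequality, with constant $c = c(q, S)$.

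There is no real obstacle here: the two ingredients (Theorem \ref{theorem:micheal-simon-free-boundary} and Hölder) combine cleanly because the $n = 2$ case is critical, making the choice of $p$ effectively unique once the target exponent $2q$ is fixed. The only thing to verify is that $p = 2q/(q+1)$ genuinely lies in the admissible range $(1, 2)$ for every $q \in (1, \infty)$, which is immediate.
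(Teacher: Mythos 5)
Your proof is correct and takes essentially the same route as the paper: the paper sets $p = 2-\delta$ and $q = (2-\delta)/\delta$, which (solving for $p$) is exactly your choice $p = 2q/(q+1)$, and then applies Hölder to pass from $L^p$ to $L^2$. The only difference is that you solve for $p$ directly rather than parametrizing by $\delta$ and an auxiliary Hölder exponent $r$, which makes the exponent bookkeeping slightly more transparent.
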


\begin{proof}
Take $n = 2$ and $p = 2-\delta$ in Theorem \ref{theorem:micheal-simon-free-boundary}, for $\delta \in (0, 1)$.  Set $q = \frac{2-\delta}{\delta}$.  Then we have for any $r \in (1, \infty)$, 
\begin{align*}
||v||_{2q} 
&\leq c (||\nabla v||_{2-\delta} + ||Hv||_{2-\delta} + ||v||_{2-\delta}) \\
&\leq c |\Sigma|^{(1-1/r) \frac{1}{2-\delta}} (||\nabla v||_{ r(2-\delta)} + ||Hv||_{r(2-\delta)} + ||v||_{r(2-\delta)}) .
\end{align*}
Then set $r = \frac{2}{2-\delta}$.
\end{proof}

\begin{remark}
Since $|\Sigma_t|$ is monotone decreasing (Remark \ref{remark:area-decreasing}),
\[
c(q, S) |\Sigma_t|^{1/2q} \leq c(q, S, |\Sigma_0|) .
\]
\end{remark}


\section{General inequalities and Stampacchia iteration}
\label{section:stampacchia}

Each pinching result uses a Stampacchia iteration scheme to obtain pointwise bounds from $L^p$-bounds.  All cases can be handled by the following general principle.

Take $(\Sigma_t)_{t\in [0, T)}$ a mean curvature flow with free boundary in $S$, and assume $T < \infty$.  Let $f_\alpha$ be some non-negative function on $\Sigma_t$, depending on some parameters $\alpha = \alpha(S, \Sigma_0, T, n)$.  Let $\tilde G \geq 0$ and $\tH > 0$ be functions on $\Sigma_t$ such that
\[
H = O(\tH), \quad \nabla \tH = O(\tilde G) .
\]

Let $f = f_\alpha \tilde H^\sigma$, and $f_k = (f - k)_+$, where $\sigma > 0$ will be small and $k > 0$ large.  Write $A(k) = \{ f \geq k \}$, and $A(k, t) = A(k) \cap \Sigma_t$.

We say $f$ satisfies $(\star)$ if there are constants $c = c(S, \Sigma_0, T, n, \alpha)$, and $C = C(S, \Sigma_0, T, n, \alpha, p, \sigma)$, such that for any $p > p_0(n, \alpha, c)$, $\sigma < 1/2$, $k > 0$ and $\beta > 0$, the following two equations hold:

(POINCARE-LIKE)
\begin{align*}
\frac{1}{c} \int_{\Sigma_t} f^p \tilde H^2 
&\leq (p + p/\beta) \int_{\Sigma_t} f^{p-2} |\nabla f|^2 + (1 + \beta p) \int_{\Sigma_t} \frac{
\tilde G^2}{\tilde H^{2-\sigma}}f^{p-1} + \int_{\Sigma_t} f^p \\
&\quad + \int_{\partial\Sigma_t} f^{p-1}\tilde H^\sigma
\end{align*}

(EVOLUTION-LIKE)
\begin{align*}
\partial_t \int_{\Sigma_t} f^p_k 
&\leq -\frac{1}{3}p^2\int_{\Sigma_t} f^{p-2}_k|\nabla f|^2 -  p/c \int_{\Sigma_t} \frac{\tilde G^2}{\tilde H^{2-\sigma}} f^{p-1}_k + cp\sigma \int_{A(k, t)} \tilde H^2 f^p \\
&\quad - 1/5 \int_{\Sigma_t} \tilde H^2 f_k^p + C \int_{A(k,t)} f^p + C|A(k)| + cp \int_{\partial\Sigma_t} f^{p-1}_k\tilde H^\sigma
\end{align*}

This section culminates in proving
\begin{theorem}\label{theorem:f-is-bounded}
If $f$ satisfies $(\star)$, then for $p$ sufficiently big, and $\sigma$ sufficiently small (depending on $p$), $f$ is uniformly bounded in spacetime.  The bound will depend on $(S, \Sigma_0, T, n, \alpha, p, \sigma)$.
\end{theorem}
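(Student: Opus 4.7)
The strategy is a parabolic Stampacchia iteration on the truncation $f_k = (f-k)_+$. Let $|A(k)|$ denote the spacetime measure of the set $A(k) = \{f \geq k\} \subset \bigsqcup_{t \in [0,T)} \Sigma_t$. The target estimate is
\[
u(k) := \sup_{t \in [0,T)} \int_{\Sigma_t} f_k^p + \int_0^T\!\!\int_{\Sigma_t} \bigl(f_k^{p-2}|\nabla f|^2 + \tilde H^2 f_k^p\bigr)\,dt \;\leq\; C\,|A(k)|
\]
for all $k \geq k_0$. Once this is in hand, applying the Michael-Simon Sobolev inequality (Theorem \ref{theorem:micheal-simon-free-boundary}) to $v = f_k^{p/2}$ and interpolating in time yields the standard iteration inequality $(h-k)^{p(1+2/n)}|A(h)| \leq C |A(k)|^{1+2/n}$ for $h > k \geq k_0$, and the De Giorgi-Stampacchia lemma then forces $|A(k_1)|=0$ for some finite $k_1$, which is the desired pointwise bound.

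The first step toward the target estimate is to remove the boundary integrals $\int_{\partial \Sigma_t} f_k^{p-1}\tilde H^\sigma$ that appear in both hypotheses. I would apply Lemma \ref{lemma:trace-formula} with test function $v = f_k^{p-1}\tilde H^\sigma$ and use $H = O(\tilde H)$, $\nabla \tilde H = O(\tilde G)$; this rewrites each boundary integral as a sum of bulk integrals of $f_k^{p-2}\tilde H^\sigma|\nabla f|$, $f_k^{p-1}\tilde H^{\sigma - 1}\tilde G$, and $f_k^{p-1}\tilde H^{\sigma + 1}$. Weighted Young inequalities (with weights of order $1/p$) then absorb these three terms into the spacetime integrals of $f_k^{p-2}|\nabla f|^2$, $\tilde G^2\tilde H^{\sigma-2} f_k^{p-1}$, and $\tilde H^2 f_k^p$ that already appear on the right-hand sides, leaving only lower-order residuals of type $C|A(k)| + C \int f_k^p$.

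The remaining obstacle is the positive term $cp\sigma \int_{A(k,t)} \tilde H^2 f^p$ in (EVOLUTION-LIKE). I would split
\[
f^p \,\mathbf{1}_{A(k)} \;\leq\; 2^{p-1}\bigl(f_k^p + k^p\bigr),
\]
absorbing the $f_k^p$-piece into $-\tfrac15 \int \tilde H^2 f_k^p$ by demanding $\sigma < (5 c p 2^{p+1})^{-1}$. For the remaining $k^p \int_{A(k,t)} \tilde H^2$ piece, I would use $f \geq k$ on $A(k,t)$ to bound it by $\int_{\Sigma_t} f^p \tilde H^2$, and then apply (POINCARE-LIKE) with $\beta$ of order $1/p$; the resulting gradient and $\tilde G^2$ terms are again absorbed using $\sigma$-smallness, while the $\int f^p$ and boundary contributions are handled as before. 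Integrating the resulting differential inequality in time from $0$ to $t$ (noting $f_k(\cdot,0) \equiv 0$ for $k > \sup_{\Sigma_0} f$) produces $u(k) \leq C|A(k)|$, which closes the iteration.

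The main subtlety is the ordering of quantifiers: $p$ must first be chosen large enough that the $p^2$ coefficient in the gradient term of (EVOLUTION-LIKE) dominates the constants generated from the boundary-to-bulk conversion and from the use of (POINCARE-LIKE), and only then can $\sigma$ be chosen small enough (depending on $p$) so that $cp\sigma 2^p$ is below an absolute threshold. I expect that the bookkeeping of these absorptions, and keeping track of the fact that the residual $C\int_{A(k,t)} f^p$ terms are controlled uniformly in $k$ since $f$ is known to be finite at each time, will be the main place where care is required.
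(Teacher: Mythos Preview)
Your scheme has a genuine gap in the handling of the term $cp\sigma\int_{A(k,t)}\tilde H^2 f^p$. After your splitting $f^p\le 2^{p-1}(f_k^p+k^p)$, the piece $k^p\int_{A(k,t)}\tilde H^2$ is bounded by $\int_{\Sigma_t}\tilde H^2 f^p$, and you invoke (POINCARE-LIKE). But (POINCARE-LIKE) returns integrals of $f^{p-2}|\nabla f|^2$ and $\tilde G^2\tilde H^{\sigma-2}f^{p-1}$ over \emph{all} of $\Sigma_t$, not of the truncations $f_k$; the negative terms in (EVOLUTION-LIKE) involve only $f_k$, so there is nothing to absorb these into. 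If you instead treat them as fixed constants, you obtain $u(k)\le C'+C|A(k)|$ with $C'$ independent of $k$, and the De~Giorgi recursion $(h-k)^\gamma|A(h)|\le C(C'+|A(k)|)^{1+2/n}$ does not force $|A(k_1)|=0$. Your remark that ``$f$ is known to be finite at each time'' does not help: one needs a uniform \emph{spacetime} bound, and even that only yields $|A(k)|^{1-1/r}$ via H\"older, never $|A(k)|$.

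What the paper does instead is to first run the whole machinery at $k=0$ to prove the a~priori estimate $\iint f^p<\infty$. The point is that at $k=0$ there is no mismatch between $f$ and $f_k$, so (POINCARE-LIKE) feeds directly into (EVOLUTION-LIKE); choosing $\beta\sim p^{-1/2}$ balances the two absorption conditions and yields the estimate for $\sigma\lesssim p^{-1/2}$ (rather than your $\sigma\lesssim 2^{-p}$). This square-root scaling is not cosmetic: it gives the ``wiggle room'' $\tilde H^m f^p=(f_\alpha\tilde H^{\sigma+m/p})^p$ with $\sigma+m/p$ still $\lesssim p^{-1/2}$, so that $\iint\tilde H^{2r}f^{pr}<\infty$ as well. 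Only then does one return to general $k$: the terms $\iint_{A(k)}\tilde H^2 f^p$ and $\iint_{A(k)}f^p$ are now controlled by H\"older as $C|A(k)|^{1-1/r}$, and combining with Sobolev gives the iteration inequality $(\ell-k)^\beta|A(\ell)|\le C|A(k)|^\alpha$ with $\alpha>1$. Your outline skips this preliminary $L^p$ step and the wiggle-room observation, and without them the absorption you describe does not close.
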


The following Lemma is the key step in handling the free boundary behavior.  We first make a useful observation.

\begin{remark}\label{remark:handle-low-H-powers}
Let $g$ be an arbitrary non-negative function on $\Sigma_t$.  If $r \in (0, 2)$, and $q \in (0, p)$ with $rp/q < 2$, then for any $\mu > 0$, 
\begin{align*}
\int g^q \tH^r &\leq \int g^p \tH^{rp/q} + |\spt g| \\
&\leq \frac{1}{\mu} \int g^p \tH^2 + C(\mu, r, q, p) \int g^p + |\spt g|
\end{align*}
\end{remark}

\begin{lemma}\label{lemma:push-boundary-inside}
For any $\mu > 0$ and $p > 4$, $\sigma < 1/2$, we can pick constants $c = c(n, S)$ and $C = (n, S, \mu, p)$ such that
\begin{align*}
 \int_{\partial\Sigma_t} f^{p-1}_k \tilde H^\sigma
&\leq c \int_{\Sigma_t} |\nabla f|^2 f_k^{p-2} + c\sigma \int_{\Sigma_t} \frac{\tilde G^2}{\tilde H^{2-\sigma}} f_k^{p-1} + \frac{cp^2}{\mu} \int_{A(k, t)} f^p \tilde H^2 \\
&\quad C \int_{A(k, t)} f^p + C|A(k, t)|
\end{align*}
\end{lemma}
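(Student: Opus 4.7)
The plan is to apply the trace-like formula of Lemma \ref{lemma:trace-formula} to the function $v = f_k^{p-1}\tilde H^\sigma$. This converts the boundary integral into three interior integrals involving $|\nabla v|$, $|Hv|$, and $|v|$, after which the task is simply to distribute powers of $\tilde H$ and $f_k$ correctly using Cauchy--Schwarz and the elementary interpolation in Remark \ref{remark:handle-low-H-powers}.

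First I would compute
\[
|\nabla v| \leq (p-1) f_k^{p-2}|\nabla f_k|\tilde H^\sigma + \sigma f_k^{p-1}\tilde H^{\sigma-1}|\nabla\tilde H|,
\]
and use $H = O(\tilde H)$, $|\nabla\tilde H| = O(\tilde G)$. The trace formula then yields
\begin{align*}
\int_{\partial\Sigma_t} f_k^{p-1}\tilde H^\sigma
&\leq c(p-1)\!\int_{\Sigma_t}\! f_k^{p-2}|\nabla f_k|\tilde H^\sigma + c\sigma\!\int_{\Sigma_t}\! f_k^{p-1}\tilde H^{\sigma-1}\tilde G \\
&\quad + c\!\int_{\Sigma_t} f_k^{p-1}\tilde H^{\sigma+1} + c\!\int_{\Sigma_t} f_k^{p-1}\tilde H^\sigma.
\end{align*}

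Next I would treat each term. For the gradient term, split $f_k^{p-2}|\nabla f_k|\tilde H^\sigma = (f_k^{(p-2)/2}|\nabla f_k|)(f_k^{(p-2)/2}\tilde H^\sigma)$ and apply Cauchy--Schwarz with a weight $\epsilon = \epsilon(n,S)$ chosen so the gradient-squared term appears with constant $c(n,S)$. The residue is a $cp^2\!\int f_k^{p-2}\tilde H^{2\sigma}$. For the $\tilde G$-term, split $f_k^{p-1}\tilde H^{\sigma-1}\tilde G = (f_k^{(p-1)/2}\tilde G/\tilde H^{(2-\sigma)/2})\cdot(f_k^{(p-1)/2}\tilde H^{\sigma/2})$ and again apply Cauchy--Schwarz; this produces the desired $c\sigma\int\tfrac{\tilde G^2}{\tilde H^{2-\sigma}}f_k^{p-1}$ plus a residue of the form $c\sigma\int f_k^{p-1}\tilde H^\sigma$.

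All residual terms now have the shape $\int f_k^q \tilde H^r$ with $r\in\{\sigma,\,2\sigma,\,\sigma+1\}<2$ and $q\in\{p-2,p-1\}<p$. For $p>4$ and $\sigma<1/2$ one checks in each case that $rp/q<2$: for $r=\sigma+1$ this requires $p>2/(1-\sigma)$, which is ensured by $p>4$. Thus Remark \ref{remark:handle-low-H-powers} applies and converts each residue into $\tfrac{1}{\mu}\int f_k^p\tilde H^2 + C(\mu,p,r,q)\int f_k^p + |A(k,t)|$. Bounding $\int f_k^p \tilde H^2 \leq \int_{A(k,t)} f^p \tilde H^2$ and $\int f_k^p\le \int_{A(k,t)}f^p$ and collecting constants (the $p^2$ emerges from the gradient-term residue) gives exactly the inequality in the statement.

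The only real obstacle is the bookkeeping: making sure the Cauchy--Schwarz weights are chosen so that the gradient-squared coefficient is $p$-independent (as dictated by $c=c(n,S)$) while the $p$-dependence is pushed into the $\tilde H^2$ and $|A(k,t)|$ terms where it is allowed, and verifying the admissibility condition $rp/q<2$ for the smallest case $r=\sigma+1,q=p-1$, which is what forces the hypothesis $p>4$.
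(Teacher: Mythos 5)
Your proof is correct and follows essentially the same route as the paper: apply the trace formula of Lemma \ref{lemma:trace-formula} to $v = f_k^{p-1}\tilde H^\sigma$, use Peter--Paul on the gradient and $\tilde G$ terms (with the weight chosen so the $|\nabla f|^2$ coefficient is $p$-independent), and dispose of the residual low-$\tilde H$-power terms via Remark \ref{remark:handle-low-H-powers}. Your explicit verification that $rp/q<2$ for all the residues, which pins down the hypothesis $p>4$, is a useful bit of bookkeeping the paper leaves implicit.
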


\begin{proof}
Using the trace formula of \ref{lemma:trace-formula}, and Peter-Paul, we have (all integrals on the right-hand-side are over $\Sigma_t$)
\begin{align*}
\int_{\partial\Sigma_t} f^{p-1}_k\tH^\sigma
&\leq cp \int f^{p-2}_k |\nabla f| \tH^\sigma + c\sigma \int f^{p-1}_k \tH^{\sigma-1}|\nabla \tH| \\
&\quad + c \int f^{p-1}_k \tH^{1+\sigma} + c \int f^{p-1}_k\tH^\sigma \\
&\leq c\int f^{p-2}_k |\nabla f|^2 + cp^2 \int f_k^{p-2} \tH^{2\sigma} + c\sigma\int f_k^{p-1} \frac{\tilde G^2}{\tH^{2-\sigma}} \\
&\quad + c \int f_k^{p-1} (\tH^{1+\sigma} + \tH^\sigma) .
\end{align*}
The Lemma follows by Remark \ref{remark:handle-low-H-powers}. 
\end{proof}

The hard part of Theorem \ref{theorem:f-is-bounded} is establishing $L^p$ bounds for appropriately large $\sigma$.  In particular, we establish spacetime $L^p$ bounds for $\sigma \sim p^{-1/2}$, rather than the naive $\sigma \sim p^{-1}$, and thereby have the following wiggle room.
\begin{lemma}\label{lemma:wiggle-room}
Suppose there is a $p_0$ and $c_\sigma$, independent of $p, \sigma$, such that whenever $p > p_0$ and $\sigma < \frac{c_\sigma}{\sqrt{p}}$,
\[
\int_0^T \int_{\Sigma_t} f^p < \infty .
\]
Then for $m > 0$, 
\[
\int_0^T \int_{\Sigma_t} \tilde H^m f^p < \infty
\]
provided $p > 4m^2/c_\sigma^2 + p_0$ and $\sigma < \frac{c_\sigma}{2\sqrt{p}}$.
\end{lemma}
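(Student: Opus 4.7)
The plan is to reduce the claim to the hypothesis by absorbing the weight $\tilde H^m$ into the exponent $\sigma$. Recalling $f = f_\alpha \tilde H^\sigma$, I rewrite
\[
\tilde H^m f^p = f_\alpha^p \, \tilde H^{p\sigma + m} = \bigl(f_\alpha \tilde H^{\tilde\sigma}\bigr)^p, \qquad \tilde\sigma := \sigma + \frac{m}{p}.
\]
Thus the spacetime integral of $\tilde H^m f^p$ is precisely the $L^p$ integral of a function of the same form as $f$, but with the shifted exponent $\tilde\sigma$ in place of $\sigma$. If the pair $(p, \tilde\sigma)$ still lies in the range covered by the hypothesis, the conclusion follows immediately by applying that hypothesis to $f_\alpha \tilde H^{\tilde\sigma}$.

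The only thing to check is that the quantitative assumptions on $(p,\sigma)$ imply the hypothesis conditions $p > p_0$ and $\tilde\sigma < c_\sigma/\sqrt{p}$. The first is immediate from $p > 4m^2/c_\sigma^2 + p_0 > p_0$. For the second, combining $\sigma < c_\sigma/(2\sqrt{p})$ with the definition of $\tilde\sigma$, it suffices to show $m/p \leq c_\sigma/(2\sqrt{p})$, which rearranges to $p \geq 4m^2/c_\sigma^2$; this is exactly the threshold imposed on $p$. Applying the assumed bound to the parameters $(p, \tilde\sigma)$ then yields
\[
\int_0^T\!\!\int_{\Sigma_t} \tilde H^m f^p = \int_0^T\!\!\int_{\Sigma_t} (f_\alpha \tilde H^{\tilde\sigma})^p < \infty .
\]

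There is effectively no obstacle: the lemma is a bookkeeping identity that trades a factor of $\tilde H$ in the integrand against a shift of $\sigma$ by $1/p$, and the numerical choices $4m^2/c_\sigma^2$ and $c_\sigma/(2\sqrt{p})$ are arranged precisely so that the shifted $\tilde\sigma$ remains within the allowed regime $\sigma < c_\sigma/\sqrt{p}$. As the name \emph{wiggle room} suggests, the content of the lemma is conceptual rather than computational: once spacetime $L^p$ bounds have been established at the borderline scaling $\sigma \sim p^{-1/2}$ (rather than the naive $\sigma \sim p^{-1}$), inserting a bounded power of $\tilde H$ is essentially free.
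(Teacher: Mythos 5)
Your proof is correct and follows exactly the same one-line idea as the paper's proof, namely the identity $\tilde H^m f^p = (f_\alpha \tilde H^{\sigma + m/p})^p$; the paper simply states this identity without spelling out the arithmetic check that $(p, \sigma + m/p)$ still lies in the hypothesis regime, which you have usefully verified.
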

\begin{proof}
Follows directly from
\[
\tilde H^m f^p = (f_\alpha \tilde H^{\sigma + m/p})^p . \qedhere
\]
\end{proof}

\begin{lemma}\label{lemma:Lp-bound}
Given $(\star)$, then
\[
\int_0^T \int_{\Sigma_t} f^p < \infty
\]
for $p > p_0(c)$, and $\sigma < c_\sigma(c) p^{-1/2}$.
\end{lemma}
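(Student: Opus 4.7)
The plan is to apply $(\star)$ with $k=0$ (so $f_k=f$ and $A(k,t)=\Sigma_t$), eliminate the boundary integrals using Lemma \ref{lemma:push-boundary-inside}, and then control the bad term $cp\sigma\int \tilde H^2 f^p$ appearing in EVOLUTION-LIKE by a clever choice of the free parameter $\beta$ in POINCARE-LIKE. The output is a time-ODI of the shape $\partial_t \int_{\Sigma_t} f^p \leq C(p)\int_{\Sigma_t} f^p + C(p)$; Gronwall on $[0,T)$ then gives a uniform-in-$t$ bound on $\int_{\Sigma_t} f^p$, and hence finiteness of the spacetime integral.

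First I would apply Lemma \ref{lemma:push-boundary-inside} at $k=0$ to the boundary term in POINCARE-LIKE, choosing that lemma's parameter $\mu$ of order $p^2$ large enough that the resulting $\tfrac{cp^2}{\mu}\int f^p \tilde H^2$ contribution reabsorbs into the left-hand side of POINCARE. Setting $\beta = p^{-1/2}$ in the result then yields
\[
\int_{\Sigma_t} f^p \tilde H^2 \leq c\, p^{3/2}\!\int |\nabla f|^2 f^{p-2} + c\sqrt{p}\!\int \frac{\tilde G^2}{\tilde H^{2-\sigma}} f^{p-1} + C(p)\!\int f^p + C(p),
\]
where $|\Sigma_t|\leq|\Sigma_0|$ (area monotonicity under MCF) absorbs the area term into the last constant.

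Next I apply Lemma \ref{lemma:push-boundary-inside} to the boundary term in EVOLUTION-LIKE, this time with $\mu$ of order $p^3$ so that the $\tfrac{c^2 p^3}{\mu}\int f^p \tilde H^2$ contribution is dominated by (say one-sixth of) the good term $-\tfrac{1}{5}\int \tilde H^2 f^p$. For $p > p_0(c)$ the extra gradient and $\tilde G$ contributions likewise absorb into $-\tfrac{p^2}{3}\int|\nabla f|^2 f^{p-2}$ and $-\tfrac{p}{c}\int\frac{\tilde G^2}{\tilde H^{2-\sigma}} f^{p-1}$ respectively, leaving
\begin{align*}
\partial_t \int_{\Sigma_t} f^p &\leq -\tfrac{p^2}{4}\int|\nabla f|^2 f^{p-2} - \tfrac{p}{2c}\int\frac{\tilde G^2}{\tilde H^{2-\sigma}}f^{p-1} \\
&\quad + cp\sigma \int \tilde H^2 f^p - \tfrac{1}{6}\int \tilde H^2 f^p + C(p)\int f^p + C(p).
\end{align*}

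The crux is to now multiply the modified POINCARE from the first step by $cp\sigma$ and substitute it for the bad term $cp\sigma \int \tilde H^2 f^p$. This produces contributions of size $c^2 p^{5/2}\sigma\int|\nabla f|^2 f^{p-2}$ and $c^2 p^{3/2}\sigma \int\frac{\tilde G^2}{\tilde H^{2-\sigma}}f^{p-1}$, each of which is absorbable by the corresponding negative term exactly when $\sigma \leq c_\sigma(c)\,p^{-1/2}$. This threshold---with the scaling $p^{-1/2}$ rather than the naive $p^{-1}$---is precisely what the balance between the POINCARE coefficients $p + p/\beta \sim p^{3/2}$ (gradient) and $1 + \beta p \sim \sqrt{p}$ (lower-order) under $\beta = p^{-1/2}$ is designed to deliver. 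Discovering and executing this balance is the main obstacle; a sloppier choice of $\beta$ would only yield $\sigma \lesssim p^{-1}$ and would destroy the wiggle room of Lemma \ref{lemma:wiggle-room}.

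After absorption, only $\partial_t\int_{\Sigma_t} f^p \leq C(p)\int_{\Sigma_t} f^p + C(p)$ survives. Integrating by Gronwall on $[0,T)$ gives $\sup_{t<T}\int_{\Sigma_t} f^p < \infty$ (the initial datum $\int_{\Sigma_0}f^p$ is finite by smoothness of $f$ on the compact $\Sigma_0$), and $\int_0^T\int_{\Sigma_t} f^p \leq T \sup_t \int_{\Sigma_t} f^p < \infty$ follows immediately.
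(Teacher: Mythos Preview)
Your proof is correct and follows essentially the same approach as the paper: combine (EVOLUTION-LIKE), (POINCARE-LIKE), and Lemma \ref{lemma:push-boundary-inside} at $k=0$, choose $\beta \sim p^{-1/2}$ and $\mu \sim p^3$, and observe that the resulting coefficients on the gradient and $\tilde G$ integrals become non-positive precisely when $\sigma \lesssim p^{-1/2}$, leaving an exponential differential inequality for $\int_{\Sigma_t} f^p$. The only cosmetic difference is that the paper substitutes everything at once and reads off the three parameter choices from a single combined inequality, whereas you first clean up POINCARE-LIKE (with its own $\mu \sim p^2$), then clean up EVOLUTION-LIKE (with $\mu \sim p^3$), and only then substitute; both orderings yield the same threshold $\sigma \sim p^{-1/2}$ for the same reason.
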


\begin{proof}
Combining equations (POINCARE-LIKE), (EVOLUTION-LIKE), and Lemma \ref{lemma:push-boundary-inside}, we have the following inequalities.  We adhere to the convention $c = c(S, \Sigma_0, T, n, \alpha)$ and $C = C(S, \Sigma_0, T, n, \alpha, p, \sigma, \mu)$.  Unless stated otherwise all integrals are on $\Sigma_t$.
\begin{align*}
\partial_t \int f^p
&\leq -p^2/3 \int |\nabla f|^2 f^{p-2} - p/c \int \frac{\tilde G^2}{\tilde H^{2-\sigma}} f^{p-1} \\
&\quad + cp\sigma \left[ p(1+1/\beta) \int |\nabla f|^2 f^{p-2} + (1+\beta p) \int \frac{\tilde G^2}{\tilde H^{2-\sigma}} f^{p-1} \right. \\
& \quad \quad \quad \left. + \int f^p + \int_{\partial \Sigma_t} f^{p-1}\tilde H^\sigma \right] \\
&\quad -1/5 \int f^p \tilde H^2 + C \int f^p + C |\Sigma_t| + cp \int_{\partial\Sigma_t} f^{p-1}\tilde H^\sigma \\
&\leq (-p^2/3 + cp^2\sigma(1+1/\beta)) \int |\nabla f|^2 f^{p-2} \\
&\quad +  (-p/c + cp\sigma(1+\beta p)) \int \frac{\tilde G^2}{\tilde H^{2-\sigma}} f^{p-1} \\
&\quad + cp\left[ \int |\nabla f|^2 f^{p-2} + \sigma \int \frac{\tilde G^2}{\tilde H^{2-\sigma}} f^{p-1} + p^2/\mu \int f^p \tilde H^2 + C\int f^p + C|\Sigma_t| \right] \\
&\quad + C \int f^p + C |\Sigma_t| - 1/5 \int f^p \tilde H^2 \\
&\leq (-p^2/3 + cp^2\sigma(1+1/\beta) + cp) \int |\nabla f|^2 f^{p-2} \\
&\quad + (-p/c + cp\sigma(1+\beta p) + cp\sigma) \int \frac{\tilde G^2}{\tilde H^{2-\sigma}} f^{p-1}  \\
&\quad + (cp^3/\mu -1/5) \int f^p\tilde H^2 + C\int f^p + C|\Sigma_t|
\end{align*}
Choose $\sigma = \frac{1}{6} (c^3p)^{-1/2}$, $\beta = (cp)^{-1/2}$ and $\mu = 10cp^3$, then for $p > 12c$ we have that $\int_{\Sigma_t} f^p$ increases at most exponentially.
\end{proof}

Now for arbitrary $k$, we can combine equation (EVOLUTION-LIKE) with Lemma \ref{lemma:push-boundary-inside} in an identical manner to obtain
\begin{align*}
\partial_t \int_{\Sigma_t} f_k^p \leq -p^2/12 \int_{\Sigma_t} |\nabla f|^2 f^{p-2}_k + C \int_{A(k, t)} f^p \tilde H^2 + C \int_{A(k, t)} f^p + C|A(k,t)|
\end{align*}
for $\sigma$, and $p$ satisfying the same bounds as Lemma \ref{lemma:Lp-bound}.  Here, as in Lemma \ref{lemma:Lp-bound}, $c$ and $C$ are both independent of $k$.

The following Theorem will complete the proof of Theorem \ref{theorem:f-is-bounded}.
\begin{theorem}\label{theorem:stampacchia}
Suppose there is a $p_0$ and $c_\sigma$, independent of $p, \sigma, k$, such that whenever $p > p_0$ and $\sigma < \frac{c_\sigma}{\sqrt{p}}$, we have
\[
\int_0^T \int_{\Sigma_t} f^p < \infty
\]
and
\begin{equation}\label{eqn:stampacchia-equation}
\partial_t \int_{\Sigma_t} f_k^p + 1/c \int_{\Sigma_t} |\nabla f_k^{p/2}|^2 \leq C \int_{A(k, t)} \tilde H^2 f^p + C \int_{A(k, t)} f^p + C|A(k, t)|
\end{equation}
for any $k > 0$.  Here $c$ and $C$ can depend on any quantity except $k$.  Then for $p$ sufficiently large, and $\sigma$ sufficiently small, $f^p$ is uniformly bounded in spacetime.  The bound will depend on $(S, \Sigma_0, T, n, p, \sigma, \alpha)$.
\end{theorem}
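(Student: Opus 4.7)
\emph{Proof proposal.} The plan is to run the parabolic De Giorgi--Stampacchia iteration, with the free-boundary Michael--Simon inequality of Theorem \ref{theorem:micheal-simon-free-boundary} (or Corollary \ref{corollary:micheal-simon-n=2} when $n=2$) playing the role of the Sobolev embedding. First I would integrate \eqref{eqn:stampacchia-equation} in time to obtain
\[
\sup_{t\in[0,T]}\int_{\Sigma_t} f_k^p + \frac{1}{c}\int_0^T\!\!\int_{\Sigma_t}|\nabla f_k^{p/2}|^2 \,dt \;\leq\; \int_{\Sigma_0} f_k^p + C\,\mathcal{E}(k),
\]
where $\mathcal{E}(k) := \int_0^T\!\int_{A(k,t)}(\tH^2 f^p + f^p + 1)\,dt$. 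The hypothesis, together with Lemma \ref{lemma:wiggle-room} applied with $m=2$, ensures $\mathcal{E}(0)$ is finite once $p > p_0(n)$ and $\sigma$ is a small enough multiple of $p^{-1/2}$. Applying Michael--Simon to $v = f_k^{p/2}$ with exponent $2$, and using $H = O(\tH)$ to absorb $\int H^2 f_k^p$ into $\mathcal{E}(k)$, yields $\int_0^T\|f_k^{p/2}\|_{L^{2n/(n-2)}(\Sigma_t)}^2\,dt \leq C\mathcal{E}(k)$.

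Next, the standard parabolic H\"older interpolation between $L^\infty_t L^2_x$ and $L^2_t L^{2n/(n-2)}_x$ gives
\[
\int_0^T\!\!\int_{\Sigma_t} f_k^{p(n+2)/n}\,dt \;\leq\; C\,\mathcal{E}(k)^{1+2/n}.
\]
Restricting to $h \geq 2k$ so that $f \leq 2f_k$ on $A(h,t)$, Chebyshev controls $\int_0^T|A(h,t)|\,dt$ and $\int_0^T\!\int_{A(h,t)} f^p$ by $C(h-k)^{-\alpha_1}\mathcal{E}(k)^{1+2/n}$ for a suitable $\alpha_1 > 0$. For the remaining $\tH^2 f^p$ term I would H\"older with conjugate exponents $(a,a')$, $a > (n+2)/2$:
\[
\int_0^T\!\!\int_{A(h,t)}\tH^2 f^p \;\leq\; \Bigl(\int_0^T\!\!\int \tH^{2a}f^{pa}\Bigr)^{1/a}\Bigl(\int_0^T|A(h,t)|\,dt\Bigr)^{1/a'},
\]
and appeal once more to Lemma \ref{lemma:wiggle-room} (this time applied after replacing $p$ by $pa$) to keep the first factor uniformly bounded, for $p$ large and $\sigma \lesssim p^{-1/2}$. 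Combining with the Chebyshev control of $\int_0^T|A(h,t)|\,dt$ yields the decay inequality
\[
\mathcal{E}(h) \;\leq\; \frac{C}{(h-k)^\alpha}\,\mathcal{E}(k)^\gamma, \qquad \gamma := \min\bigl(1+\tfrac{2}{n},\;(1+\tfrac{2}{n})/a'\bigr) > 1,
\]
valid for $h \geq 2k \geq 2k_0$.

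The classical Stampacchia lemma (if $\phi \geq 0$ is non-increasing on $[k_0,\infty)$ and $\phi(h) \leq C(h-k)^{-\alpha}\phi(k)^\gamma$ with $\gamma>1$, then $\phi \equiv 0$ beyond a computable $k_*<\infty$) then forces $\mathcal{E}(k_*) = 0$, so $|A(k_*,t)|=0$ for a.e.\ $t$ and hence $f \leq k_*$ throughout spacetime. The main technical obstacle is the $\tH^2 f^p$ term in \eqref{eqn:stampacchia-equation}: it is the only term not naively $L^1$ in spacetime, and its H\"older bound is what drives the requirement $\gamma > 1$. The hypothesis $\sigma < c_\sigma/\sqrt{p}$ is calibrated precisely so that one can simultaneously choose $a > (n+2)/2$ (to ensure $\gamma > 1$) and still have $\int_0^T\!\int \tH^{2a}f^{pa}$ finite via Lemma \ref{lemma:wiggle-room}; a naive scaling $\sigma \sim p^{-1}$ would leave no such room.
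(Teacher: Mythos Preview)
Your approach is essentially the paper's: integrate \eqref{eqn:stampacchia-equation} in time, feed $v=f_k^{p/2}$ into the free-boundary Michael--Simon inequality, interpolate the resulting $L^\infty_tL^2_x$ and $L^2_tL^{2q}_x$ bounds parabolically, invoke the wiggle-room Lemma~\ref{lemma:wiggle-room} to control the $\tH$-weighted higher norms, and close with Stampacchia. The only organizational difference is that the paper iterates on the spacetime measure $|A(k)|$ rather than on your $\mathcal{E}(k)$: it bounds \emph{all three} terms of $\mathcal{E}(k)$ simultaneously by H\"older, writing
\[
\iint_{A(k)} f^p + \iint_{A(k)}\tH^2 f^p + |A(k)| \;\leq\; |A(k)|^{1-1/r}\Bigl(\bigl\|f^p\bigr\|_{r}+\bigl\|\tH^2 f^p\bigr\|_{r}+|A(k)|^{1/r}\Bigr),
\]
with the $L^r$ factors uniformly bounded via Lemma~\ref{lemma:wiggle-room}, and then obtains $|\ell-k|^\beta|A(\ell)|\leq C|A(k)|^\alpha$ directly.

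There is one genuine slip in your version. You impose $h\geq 2k$ in order to get $f\leq 2f_k$ on $A(h,t)$ when estimating $\int_{A(h,t)}f^p$, but then invoke the classical Stampacchia lemma, whose iteration uses levels $k_j\nearrow k_*$ with $k_{j+1}-k_j\to 0$, so $k_{j+1}<2k_j$ eventually; the restriction $h\geq 2k$ is incompatible with that scheme. A dyadic sequence $k_j=2^jk_0$ would respect your restriction but only yields $\mathcal{E}(k_j)\to 0$, not $\mathcal{E}(k_*)=0$ for some finite $k_*$, which is what you need for a pointwise bound. The fix is immediate: handle $\int_{A(h,t)}f^p$ exactly as you handle the $\tH^2 f^p$ term, via H\"older against $|A(h)|^{1/a'}$ and a uniformly bounded $\bigl(\iint f^{pa}\bigr)^{1/a}$ (wiggle room with $m=0$). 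This removes the constraint $h\geq 2k$, and the decay inequality then holds for all $h>k\geq k_0$ so the standard iteration applies. This is precisely what the paper's choice to iterate on $|A(k)|$ accomplishes in one stroke.
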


\begin{proof}
By Theorem \ref{theorem:micheal-simon-free-boundary} and Corollary \ref{corollary:micheal-simon-n=2}, for each $n \geq 2$ there is a $q > 1$, and $c = c(n, q, |\Sigma_0|)$, such that
\[
\left( \int_\Sigma v^{2q}\right)^{1/q} \leq c\int_\Sigma |Dv|^2 + c\int_\Sigma v^2 H^2 + c \int_\Sigma v^2 .
\]
So take $v = f^{p/2}_k$ and integrate \eqref{eqn:stampacchia-equation} to obtain (for possibly larger $C$)
\[
\max\left\{ \sup_{[0,T)} \int_{\Sigma_t} f_k^p, \int_0^T \left( \int_{\Sigma_t} f_k^{pq} \right)^{1/q} \right\}
\leq C\iint_{A(k)} f^p + C \iint_{A(k)} \tilde H^2 f^p + C |A(k)| .
\]
provided $k \geq k_0(\Sigma_0, n, p, \sigma, \alpha)$.  All terms on the right are bounded by virtue of Lemma \ref{lemma:wiggle-room}, and the monotonicity of $|\Sigma_t|$.  Therefore
\begin{align*}
\int_0^T \int f_k^{p\frac{2q-1}{q}} 
&\leq \int_0^T \left( \int f_k^{pq} \right)^{1/q} \left( \int f_k^p \right)^{\frac{q - 1}{q}} \\
&\leq C \left( \iint_{A(k)} f^p + \iint_{A(k)} \tilde H^2 f^p + |A(k)|\right)^{\frac{2q-1}{q}} \\
&\leq C |A(k)|^{\frac{2q - 1}{q} (1 - 1/r)} \left( \left(\iint_{A(k)} f^{pr}\right)^{1/r} + \left(\iint_{A(k)} \tilde H^{2r} f^{pr}\right)^{1/r} + |A(k)|^{1/r} \right)^{\frac{2q - 1}{q}} \\
&\leq C(S, \alpha, p, \sigma, T, c_\sigma, \Sigma_0) |A(k)|^\alpha
\end{align*}
for any $r$, provided $p > 16r/c_\sigma^2 + p_0$ and $\sigma < \frac{c_\sigma}{2\sqrt{p}}$.  If we fix $r$ sufficiently large, then $\alpha = \frac{2q - 1}{q} (1 - 1/r) > 1$.  Fix $p$, $\sigma$, then for any $\ell > k$, we have the inequality
\begin{equation} \label{eqn:proto-stampacchia}
|\ell - k|^\beta |A(\ell)| \leq C |A(k)|^\alpha
\end{equation}
where $\beta = p\frac{2q - 1}{q} > 0$, and $C$ is independent of $\ell, k$.  It follows by a standard argument that $A(k) = 0$ for $k > k_0(\alpha, \beta, C)$, $C$ as in \eqref{eqn:proto-stampacchia}.
\end{proof}

\section{Mean curvature flow with free boundary preliminaries}
\label{section:preliminaries}

Let $(\Sigma_t)_{t \in [0, T)}$ be the mean curvature flow of $\Sigma_0$, with free-boundary in $S$.  Here, as always in this paper, $T$ is the maximal time of existence.

Write $g = (g_{ij})$ and $A = (h_{ij})$ for the induced metric and second fundamental form on $\Sigma_t$.  We follow the usual convention that $g^{ij}$ is the matrix inverse to $g_{ij}$, and a raised index such as $h^i_j$ means $\sum_k g^{ik}h_{kj}$.  We denote $dV$ the volume form on $\Sigma_t$, and take $N$ for the outward normal of $\partial\Sigma \subset \Sigma$.

We write $\nabla$ for covariant differentiation in $\Sigma$, and $\bar\nabla$ for covariant differentiation in $R^{n+1}$.  We write $(k_{ij})$ for the second fundamental form of the barrier surface $S$.

\begin{proposition}\label{prop:general-evolution-equations}
We have the following evolution equations, using summation convention.
\begin{align*}
&\partial_t g_{ij} = -2H h_{ij} \\
&\partial_t h_{ij} = \Delta h_{ij} - 2Hh_{im} h^m_j + |A|^2 h_{ij}
\end{align*}
and
\begin{align*}
&\partial_t H = \Delta H + |A|^2 H \\
&\partial_t dV = -H^2 dV \\
&\partial_t \nu = \nabla H
\end{align*}
\end{proposition}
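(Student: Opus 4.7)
All of these identities are pointwise computations on the interior of $\Sigma_t$; the free boundary condition on $\partial\Sigma_t$ plays no role in their derivation, so the plan is to reproduce the standard computations of Huisken verbatim, using only the flow equation $\partial_t F = -H\nu$ and the compatibility relations $\langle\nu,\partial_i F\rangle = 0$, $|\nu|^2 = 1$.

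First I would compute $\partial_t g_{ij}$ by differentiating $g_{ij} = \langle\partial_i F, \partial_j F\rangle$, commuting $\partial_t$ and $\partial_i$, and using $\partial_t F = -H\nu$ together with $\partial_i\nu = h_i^k \partial_k F$; the two terms produce $-2Hh_{ij}$. From $\partial_t dV = \tfrac{1}{2}g^{ij}(\partial_t g_{ij})\, dV$ this immediately gives $\partial_t dV = -H^2\, dV$. For $\partial_t\nu$ I would use that $\partial_t\nu$ is tangential (differentiate $|\nu|^2=1$), then test against $\partial_i F$: differentiating $\langle\nu,\partial_i F\rangle = 0$ in $t$ gives $\langle\partial_t\nu,\partial_i F\rangle = -\langle\nu,\partial_i\partial_t F\rangle = \partial_i H$, so $\partial_t\nu = \nabla H$.

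For $\partial_t h_{ij}$ I would start from $h_{ij} = -\langle \partial_i\partial_j F,\nu\rangle$ (or equivalently $h_{ij}=\langle\partial_i\nu,\partial_j F\rangle$), differentiate in $t$, and carefully separate the tangential and normal parts using the Gauss and Weingarten formulae. The direct differentiation yields $\partial_t h_{ij} = \nabla_i\nabla_j H - H h_{ik}h^k_j$. To reach the form stated I would invoke the Simons identity
\[
\nabla_i \nabla_j H = \Delta h_{ij} + |A|^2 h_{ij} - H h_{ik} h^k_j,
\]
obtained from the Codazzi equation $\nabla_k h_{ij} = \nabla_i h_{jk}$ and one commutation of covariant derivatives in $R^{n+1}$ (where the ambient curvature vanishes, so no error terms appear). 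Substituting gives $\partial_t h_{ij} = \Delta h_{ij} - 2H h_{ik}h^k_j + |A|^2 h_{ij}$, and tracing with $g^{ij}$ (remembering that $\partial_t g^{ij} = 2H h^{ij}$ contributes the extra $|A|^2 H$ term) produces $\partial_t H = \Delta H + |A|^2 H$.

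There is no real obstacle here: the derivations are the classical ones of Huisken and are unaffected by the presence of $\partial\Sigma_t\subset S$. The only ingredient worth flagging is the Simons identity step, which is where $\Delta h_{ij}$ enters; everything else is linear algebra on the bundle $T\Sigma_t\oplus \mathbb{R}\nu$.
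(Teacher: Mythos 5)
Your computations are correct and are exactly the standard derivations of Huisken; the paper itself simply cites \cite{huisken:umbilic-pinching} for this proposition rather than re-deriving it, so you are reproducing the argument the paper refers to. One cosmetic remark: in the final trace step the surviving term $|A|^2 H$ arises from combining $\partial_t g^{ij}\,h_{ij} = 2H|A|^2$ with the $-2H|A|^2 + |A|^2 H$ coming from $g^{ij}\partial_t h_{ij}$, not purely from the $\partial_t g^{ij}$ contribution as your parenthetical suggests, but the computation and result are right.
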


\begin{proof}
See \cite{huisken:umbilic-pinching}.
\end{proof}

\begin{remark}\label{remark:area-decreasing}
Since the boundary $\partial\Sigma_t$ is always orthogonal to the direction of motion, 
\[
\partial_t |\Sigma_t| = - \int_{\Sigma_t} H^2 \leq 0 .
\]
Specifying other angles of contact would add a boundary term to $\partial_t |\Sigma_t|$, and could even cause area increase.
\end{remark}

\begin{proposition}\label{prop:H-boundary-derivative}
We have
\[
N(H) = k_{\nu\nu} H .
\]
In particular, positivity of $H$ is preserved for all time.  If $S$ is convex, then $H$ is non-decreasing, and in fact must blow up in finite time.
\end{proposition}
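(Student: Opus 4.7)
The plan is to extract $N(H)=k_{\nu\nu}H$ directly from the evolution equation $\partial_t\nu = \nabla H$ of Proposition~\ref{prop:general-evolution-equations} together with the orthogonality constraint, and then to deduce the pointwise consequences via the parabolic maximum principle.

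First I would derive the boundary formula. Since $\Sigma_t$ meets $S$ orthogonally for every $t$, the function $\langle \nu,\nu_S\rangle$ vanishes identically on $\partial\Sigma_t$. Because $\nu \in TS$ on $\partial\Sigma_t$, the velocity $\partial_t F = -H\nu$ is itself tangent to $S$ along $\partial\Sigma_t$, so a material boundary point $F(p,t)$ remains on $\partial\Sigma_t = \Sigma_t\cap S$ for all $t$. Differentiating $\langle \nu(F(p,t),t),\,\nu_S(F(p,t))\rangle \equiv 0$ in time and applying $\partial_t\nu=\nabla H$ gives
\[
 0 \;=\; \langle \nabla H,\nu_S\rangle \;+\; \langle \nu,\bar\nabla_{-H\nu}\nu_S\rangle.
\]
Along $\partial\Sigma$, $\nu_S = N$ and $\nabla H \in T\Sigma$, so the first term equals $N(H)$. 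For the second, $\nu\in TS$ lets the Weingarten relation for $S$ give $\langle \nu,\bar\nabla_\nu\nu_S\rangle = k(\nu,\nu)=k_{\nu\nu}$, so this term equals $-Hk_{\nu\nu}$. Rearranging delivers $N(H)=k_{\nu\nu}H$.

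Once this Robin-type boundary condition is in hand, positivity of $H$ follows from the parabolic maximum principle applied to $\partial_t H = \Delta H + |A|^2 H$. If $H>0$ initially and $\min_{\Sigma_t}H$ first dropped to zero, the minimum cannot be interior (there $\partial_t H\geq\Delta H\geq 0$), and a boundary minimum is ruled out by the Hopf boundary lemma, which would demand $N(H)<0$, contradicting $N(H)=k_{\nu\nu}\cdot 0 = 0$. When $S$ is convex, the orientation convention in force gives $k_{\nu\nu}\geq 0$, so $N(H)\geq 0$ on $\partial\Sigma$; combined with the interior bound $\partial_t H\geq |A|^2 H\geq 0$ at interior minima, this implies $\min_{\Sigma_t}H$ is nondecreasing. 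Finally, Cauchy--Schwarz yields $|A|^2\geq H^2/n$, so at the spatial minimum
\[
 \tfrac{d}{dt}\min_{\Sigma_t}H \;\geq\; \tfrac{1}{n}\bigl(\min_{\Sigma_t}H\bigr)^{3},
\]
an ODE which blows up in finite time, forcing $T<\infty$.

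The main obstacle is really only a small conceptual point: one must verify that a material boundary point remains on $\partial\Sigma_t$ so that $\langle \nu,\nu_S\rangle \equiv 0$ may be differentiated in time at a single parametric point. This is transparent here because $-H\nu\in TS$ along $\partial\Sigma_t$, but it deserves explicit mention. After that observation the proof reduces to a short application of Proposition~\ref{prop:general-evolution-equations}, the Weingarten equation for $S$, and the standard parabolic toolkit.
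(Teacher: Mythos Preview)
Your argument is correct and follows precisely the route indicated in the paper: differentiate the orthogonality relation $\langle N,\nu\rangle=0$ in time using $\partial_t\nu=\nabla H$, then read off the consequences from the evolution equation for $H$ via the parabolic maximum principle. You simply spell out in full the details that the paper compresses into one line.
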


\begin{proof}
Differentiate the relation $<N, \nu> = 0$ in time.  Evolution behavior follows from Proposition \ref{prop:general-evolution-equations}.
\end{proof}

\begin{remark}
Notice that $H$ may still decrease.  We will show later that $H$ decreases at worst exponentially in time.
\end{remark}

\begin{proposition}
For any $X \in T_p\partial\Sigma$, 
\[
h_{N, X} = -k_{\nu, X} .
\]
\end{proposition}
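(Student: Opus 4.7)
The plan is to differentiate the orthogonality relation $\langle \nu, \nu_S \rangle \equiv 0$ along $\partial\Sigma$ in the tangential direction $X$, and reinterpret both sides of the resulting identity in terms of the two second fundamental forms $h$ and $k$.

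First I would record the geometric content of the free boundary condition: along $\partial\Sigma$ we have the identifications $N = \nu_S$ and $\nu \in TS$ (so $\langle \nu, \nu_S \rangle = 0$ pointwise on $\partial\Sigma$). For $X \in T_p \partial\Sigma$, the vector $X$ is tangent both to $\Sigma$ (so that $h_{N,X}$ makes sense) and to $S$ (so that $k_{\nu,X}$ makes sense, since $\nu$ is also tangent to $S$ there).

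Next I would compute. Differentiating $\langle \nu, \nu_S \rangle \equiv 0$ in the direction of $X$ gives
\[
0 = X\langle \nu, \nu_S\rangle = \langle \bar\nabla_X \nu, \nu_S\rangle + \langle \nu, \bar\nabla_X \nu_S\rangle.
\]
Using the Weingarten-type identities $h(X,Y) = -\langle \bar\nabla_X \nu, Y\rangle$ for $Y$ tangent to $\Sigma$, and $k(U,V) = -\langle \bar\nabla_U \nu_S, V\rangle$ for $U,V$ tangent to $S$, together with $N = \nu_S$, the two terms become
\[
\langle \bar\nabla_X \nu, \nu_S\rangle = \langle \bar\nabla_X \nu, N\rangle = -h_{X,N}, \qquad \langle \nu, \bar\nabla_X \nu_S\rangle = -k_{X,\nu}.
\]
Substituting back yields $-h_{X,N} - k_{X,\nu} = 0$, i.e.\ $h_{N,X} = -k_{\nu, X}$ by the symmetry of both fundamental forms.

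There is no real obstacle here: the whole statement is a first-order consequence of the orthogonal contact condition, and the only small subtlety is checking that both $\nu$ and $X$ are tangent to $S$ along $\partial\Sigma$ so that $k_{\nu,X}$ is actually defined. Once that is in place, one line of differentiation produces the identity.
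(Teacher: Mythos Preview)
Your argument is correct and is essentially the same as the paper's: the paper simply plugs $N=\nu_S$ into the definition of $h$ to get $h(N,X)=-\langle\nu,\bar\nabla_X\nu_S\rangle=-k(\nu,X)$, which is exactly the identity you obtain by differentiating $\langle\nu,\nu_S\rangle=0$ and rearranging. The only cosmetic difference is that you split the derivative into two terms first, whereas the paper writes it as a single line.
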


\begin{proof}
Since $N \equiv \nu_S$ along $T_p\partial\Sigma$, 
\[
h(N, X) = -<\nu, \bar \nabla_X \nu_S> = -k(\nu, X) \qedhere
\]
\end{proof}

As mentioned in the Introduction the key technical issue in extending the estimates to general barrier surfaces is in calculating $\nabla_N h_{N,X} = \nabla_X h_{N,N}$, for $X \in T_p\partial\Sigma$.  To avoid the issue we perturb $h$ so that $h_{N, X} = 0$.

\begin{definition}
Extend and fix $k$ and $\nu_S$ to be defined on $R^{n+1}$.  Define the \emph{perturbed second fundamental form} $\bA$ of $\Sigma$ to be
\begin{equation}
\bar h_{ij} = h_{ij} + T_{ij\nu} + D_0 g_{ij}
\end{equation}
where $T$ is a 3-tensor defined on the ambient space by
\[
T(X, Y, Z) = k(X, Z) g(Y, \nu_S) + k(Y, Z) g(X, \nu_S) .
\]
We choose and fix the constant $D_0$ so that
\[
T(X, X, \nu) + D_0 \geq 1
\]
for any unit vector $X$.  From henceforth when a constant depends on $D_0$ or the extensions of $k$ or $\nu_S$, we will only say it depends on the barrier surface $S$.
\end{definition}

Our choice of $D_0$ and Proposition \ref{prop:H-boundary-derivative} imply that
\begin{equation}
\bH \geq H + 1 \geq 1, \quad |\bA| \geq 1 .
\end{equation}

\section{Evolution of tensors}
\label{section:evolutions}

\begin{proposition}\label{prop:tensor-evolution}
Let $T$ be a 3-tensor defined on the ambient space.  If $T_{ij\nu}$ is the 2-tensor $T(\cdot, \cdot, \nu)$ restricted to $T\Sigma$, then
\begin{align*}
&\nabla T_{ij\nu} = O(1 + |A|) \\
&\nabla^2 T_{ij\nu} = O(1 + |A|^2 + |\nabla A|) \\
&(\partial_t - \Delta) T_{ij\nu} = O(1 + |A|^2) \\
&(\partial_t - \Delta) T^i_{j\nu} = O(1 + |A|^2) .
\end{align*}
\end{proposition}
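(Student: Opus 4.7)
The plan is to expand each quantity in a local frame via Gauss--Weingarten and, for the parabolic estimate, to use the Codazzi equation to extract a cancellation between the first-order terms.

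For the spatial derivatives: in a local frame $\{e_i\}$ on $\Sigma$, the Gauss identity $\bar\nabla_{e_k} e_i = \nabla_k e_i + h_{ki}\nu$ together with Weingarten $\bar\nabla_{e_k}\nu = -h_k{}^l e_l$ yields, after differentiating $T(e_i, e_j, \nu)$ along $e_k$ and subtracting the induced connection terms,
\[
\nabla_k T_{ij\nu} = (\bar\nabla_{e_k}T)_{ij\nu} + h_{ki}T_{\nu j\nu} + h_{kj}T_{i\nu\nu} - h_k{}^l T_{ijl}.
\]
Every ambient derivative of $T$ of finite order is bounded in terms of $S$, so this immediately gives $\nabla T_{ij\nu} = O(1+|A|)$. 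Differentiating once more and applying the same identity to each slot, the new contributions are either bounded ambient terms, products of $h$ with a first-derivative expression (hence $O(|A|+|A|^2)$), or a single factor of $\nabla h$; collecting gives $\nabla^2 T_{ij\nu} = O(1+|A|^2+|\nabla A|)$.

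For the evolution estimate, work in a coordinate frame so $e_i = \partial_i F$. From $\partial_t F = -H\nu$, $\partial_t\nu = \nabla H$, and $\partial_i\nu = -h_i{}^l e_l$, one computes $\partial_t e_i = -(\nabla_i H)\nu + H h_i{}^l e_l$, and then
\[
\partial_t T_{ij\nu} = -H(\bar\nabla_\nu T)_{ij\nu} - (\nabla_i H) T_{\nu j\nu} - (\nabla_j H) T_{i\nu\nu} + (\nabla^l H) T_{ijl} + H h_i{}^l T_{lj\nu} + H h_j{}^l T_{il\nu}.
\]
Compute $\Delta T_{ij\nu} = g^{km}\nabla_k\nabla_m T_{ij\nu}$ by applying $g^{km}\nabla_k$ to the spatial formula above. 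Every resulting term is already of order $O(1+|A|^2)$ except for the three $\nabla h$-contributions
\[
g^{km}(\nabla_k h_{mi})T_{\nu j\nu}, \qquad g^{km}(\nabla_k h_{mj})T_{i\nu\nu}, \qquad -g^{km}(\nabla_k h_m{}^l) T_{ijl}.
\]
Codazzi in $R^{n+1}$ converts these traces into $\nabla_i H$, $\nabla_j H$, $-\nabla^l H$ respectively, which cancel sign-by-sign the $\nabla H$ terms in $\partial_t T_{ij\nu}$. Hence $(\partial_t-\Delta) T_{ij\nu} = O(1+|A|^2)$. For the raised-index version, $\partial_t g^{ik} = 2Hh^{ik} = O(|A|^2)$ while $g^{ik}$ is parallel, so $(\partial_t - \Delta) T^i_{j\nu}$ inherits the same bound.

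The only substantive point in the argument is this cancellation: without Codazzi converting the traces of $\nabla h$ into gradients of $H$, the evolution estimate would pick up a $|\nabla A|$ term rather than merely $|A|^2$. Everything else reduces to bookkeeping, tallying the $h$ factors produced by Gauss/Weingarten and invoking boundedness of finite-order ambient derivatives of $T$.
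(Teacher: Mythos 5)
Your approach is the same as the paper's: expand via Gauss--Weingarten to isolate the $h$-corrections in $\nabla T_{ij\nu}$, use Codazzi to convert the trace of $\nabla h$ into $\nabla H$, and observe the cancellation against the $\nabla H$ terms from $\partial_t e_i$ and $\partial_t\nu$. You have correctly identified that this cancellation is the crux of the argument. However, as written, the Gauss and Weingarten relations you quote are sign-reversed relative to the convention the paper is forced into by $\partial_t F = -H\nu$ and $H>0$ for mean-convex surfaces; the paper's convention is $\bar\nabla_{e_k}\nu = h_k{}^l e_l$ and $\bar\nabla_{e_k}e_i = \nabla_k e_i - h_{ki}\nu$, so that a sphere with outer normal has positive principal curvatures. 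With your signs, the three $h$-corrections in $\nabla_k T_{ij\nu}$ acquire the opposite sign, and hence so do the $\nabla H$-terms appearing in $\Delta T_{ij\nu}$ after Codazzi: you get $+(\nabla_i H)T_{\nu j\nu} + (\nabla_j H)T_{i\nu\nu} - (\nabla^l H)T_{ijl}$ in $\Delta$, versus $-(\nabla_i H)T_{\nu j\nu} - (\nabla_j H)T_{i\nu\nu} + (\nabla^l H)T_{ijl}$ in $\partial_t$ (the latter comes from the normal component of $\partial_t e_i$, which is $-(\nabla_i H)\nu$ in either convention). In $(\partial_t - \Delta)$ these add rather than cancel, producing an uncontrolled $O(|\nabla H|)$ term, which would leave the estimate as $O(1+|A|^2+|\nabla A|)$ rather than $O(1+|A|^2)$. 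Once the Gauss--Weingarten signs are matched to the paper's (equivalently, Huisken's), the three pairs of $\nabla H$-terms genuinely cancel and the argument closes, so the fix is a matter of bookkeeping and not of ideas; but the claim of a sign-by-sign cancellation as you wrote it is false, and it is exactly the substantive step you flagged.
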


\begin{proof}
Choose orthonormal geodesic coordinates $\partial_i$ at a fixed point $p$.  We use the summation convention, excepting of course on $\nu$.  We have
\begin{align*}
\nabla_p T_{ij\nu} 
&= \bar\nabla_p T_{ij\nu} + T_{ij \bar\nabla_p \nu} + T_{\nabla^\perp_p i j \nu} + T_{i \nabla^\perp_p j \nu} \\
&= \bar\nabla_p T_{ij\nu} + h_{pk} T_{ijk} - h_{pi} T_{\nu j \nu} - h_{pj} T_{i \nu \nu} \\
&= O(1 + |A|)
\end{align*}

We work towards calculating $\nabla^2 T$ and $\Delta T$.  We have
\begin{align*}
\nabla_q (h_{pi} T_{\nu j\nu})
&= (\nabla_q h_{pi}) T_{\nu j\nu} + h_{pi} \nabla_q T_{\nu j\nu} \\
&= \nabla_i h_{pq} T_{\nu j\nu} + h_{pi} (\bar\nabla_q T_{\nu j\nu} + h_{qk} T_{\nu j k} + h_{qk} T_{k j \nu} - h_{qj} T_{\nu \nu \nu} ) \\
&= \nabla_i h_{pq} T_{\nu j \nu} + O(1 + |A|^2)
\end{align*}
and
\begin{align*}
\nabla_q (h_{pk} T_{ijk})
&= \nabla_k h_{pq} T_{ijk} + h_{pk} (\bar\nabla_q T_{ijk} - h_{qi} T_{\nu jk} - h_{qj} T_{i\nu k} - h_{qk} T_{ij \nu}) \\
&= \nabla_k h_{pq} T_{ijk} + O(1 + |A|^2)
\end{align*}
and
\begin{align*}
\nabla_q \bar\nabla_p T_{ij\nu}
&= \bar\nabla^2_{q,p} T_{ij\nu} + \bar \nabla_{\nabla^\perp_q p} T_{ij\nu} + \bar\nabla_p T_{\nabla^\perp_q i j \nu} + \bar\nabla_p T_{i \nabla^\perp_q j \nu} + \bar\nabla_p T_{i j \nabla_q \nu} \\
&= \bar\nabla^2_{q,p} T_{ij\nu} - h_{qp} \bar\nabla_\nu T_{ij\nu} - h_{qi} \bar\nabla_p T_{\nu j\nu} - h_{qj} \bar\nabla_p T_{i\nu\nu} + h_{qk} \bar\nabla_p T_{ijk} \\
&= O(1 + |A|) .
\end{align*}
We therefore have
\begin{align*}
\nabla^2_{q,p} T_{ij\nu} &= \nabla_q (\bar\nabla_p T_{ij\nu} + h_{pk} T_{ijk} - h_{pi} T_{\nu j\nu} - h_{pj} T_{i\nu\nu} )\\
&= \nabla_k h_{pq} T_{ijk} - \nabla_i h_{pq} T_{\nu j \nu} - \nabla_j h_{pq} T_{i \nu \nu} + O(1 + |A|^2) \\
&= O(1 + |A|^2 + |\nabla A|)
\end{align*}
and
\begin{align*}
\Delta T_{ij\nu}
&= \partial_k H T_{ijk} - \partial_i H T_{\nu j \nu} - \partial_j H T_{i \nu \nu} + O(1 + |A|^2) \\
&= O(1 + |A|^2 + |\nabla H|) .
\end{align*}

We calculate the time derivative.  Here $(\bar x^\alpha)$ are standard coordinates in $R^{n+1}$.
\begin{align*}
\partial_t T_{ij\nu}
&= \partial_t \left(T_{\alpha\beta\gamma}(F(x)) \frac{\partial F^\alpha}{\partial x^i} \frac{\partial F^\beta}{\partial x^j} \nu^\gamma\right) \\
&= \left(\frac{\partial T_{\alpha\beta\gamma}}{\bar x^\delta} \partial_t F^\delta\right) \partial_i^\alpha F \partial_j^\beta F \nu^\gamma + T_{\alpha\beta\gamma} \left( \frac{\partial}{\partial x^i} \frac{\partial F^\alpha}{\partial t} \right) \partial_j^\beta F \nu^\gamma \\
&\quad + T_{\alpha\beta\gamma} \partial_i^\alpha F \left(\frac{\partial}{\partial x^j} \frac{\partial F^\beta}{\partial t} \right) \nu^\gamma + T_{\alpha\beta\gamma} \partial_i^\alpha F \partial_j^\beta F \frac{\nu^\gamma}{\partial t} \\
&= -H \bar\nabla_\nu T_{ij\nu} + T( \bar\nabla_i (-H \nu), \partial_j, \nu) + T(\partial_i, \bar\nabla_j (-H\nu), \nu) + T(\partial_i, \partial_j, \nabla H) \\
&= -H \bar\nabla_\nu T_{ij\nu} - \partial_i H T_{\nu j \nu} - H h_{ik} T_{kj\nu} - \partial_j H T_{i \nu \nu} - H h_{jk} T_{i k \nu} + \partial_k H T_{ijk} \\
&= -\partial_i H T_{\nu j \nu} - \partial_j H T_{i \nu \nu} + \partial_k H T_{ijk} + O(1 + |A|^2)
\end{align*}
which proves the penultimate formula.  The last formula follows by observing that $\partial_t g_{ij} = O(|A|^2)$.
\end{proof}

\begin{corollary}\label{corollary:orders}
We have
\begin{align*}
1 = O(|\bA|), \quad |A| = O(|\bA|), \quad |\nabla A| = O(|\nabla \bA| + |\bA|) .
\end{align*}
\end{corollary}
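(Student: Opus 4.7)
The corollary is essentially a bookkeeping consequence of the definition $\bar h_{ij} = h_{ij} + T_{ij\nu} + D_0 g_{ij}$ together with the tensor estimates of Proposition~\ref{prop:tensor-evolution}. My plan is to verify the three assertions in sequence, each one feeding into the next.

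First, the bound $1 = O(|\bA|)$ is immediate from the already-established inequality $|\bA| \geq 1$ (itself a consequence of the choice of $D_0$, which ensures $T(X,X,\nu) + D_0 \geq 1$ for unit vectors $X$). So this claim requires no real work, just a citation of the normalization.

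Next, for $|A| = O(|\bA|)$, I would solve the definition for $h$ to get $h_{ij} = \bar h_{ij} - T_{ij\nu} - D_0 g_{ij}$. Since $T$ and $\nu_S$ are fixed smooth tensors on $R^{n+1}$ with bounded $C^0$ norm in a neighborhood of $S$ (and hence on $\Sigma_t$, which stays in a bounded region by the finite-distance remark), $T_{ij\nu}$ has its components uniformly bounded, so $|T_{ij\nu}| = O(1)$ and $|D_0 g_{ij}|$ is obviously $O(1)$. The triangle inequality then gives $|A| \leq |\bA| + O(1)$, and applying $1 = O(|\bA|)$ from the first step converts the additive $O(1)$ into a multiplicative $O(|\bA|)$.

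Finally, for $|\nabla A| = O(|\nabla \bA| + |\bA|)$, I would differentiate the defining relation and use $\nabla g \equiv 0$ to get $\nabla h_{ij} = \nabla \bar h_{ij} - \nabla T_{ij\nu}$. Proposition~\ref{prop:tensor-evolution} gives $\nabla T_{ij\nu} = O(1 + |A|)$, and by the previous step $1 + |A| = O(|\bA|)$. So $|\nabla A| \leq |\nabla \bA| + O(|\bA|) = O(|\nabla \bA| + |\bA|)$.

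No serious obstacle is expected; the only subtlety is the bootstrap structure, where the bound on $1$ is needed to promote the $O(1)$ error on $|A|$ to $O(|\bA|)$, which is then needed to promote the $O(1 + |A|)$ error on $\nabla A$ to $O(|\bA|)$. The result is purely algebraic given Proposition~\ref{prop:tensor-evolution} and the already-proved lower bound $|\bA| \geq 1$.
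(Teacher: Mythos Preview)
Your proof is correct and follows exactly the same approach as the paper, which is a terse three-line version of precisely your argument: $|\bA| \geq 1$ gives the first claim, $\bA = A + O(1)$ gives the second, and $\nabla \bA = \nabla A + O(1 + |A|)$ (via Proposition~\ref{prop:tensor-evolution}) gives the third. The parenthetical about $\Sigma_t$ staying in a bounded region is unnecessary, since the extensions of $k$ and $\nu_S$ are fixed smooth tensors on $R^{n+1}$ and the $O(1)$ constant is simply absorbed into the dependence on $S$.
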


\begin{proof}
The first formula follows trivially from $|\bA| \geq 1$.  The second because $\bar A = A + O(1)$.  The third since $\nabla A = \nabla A + O(|A| + 1)$.
\end{proof}

\begin{theorem}\label{thm:A-H-evolution}
We have the evolution equations
\begin{align*}
&\partial_t \bar h^i_j = \Delta \bar h^i_j + |\bA|^2 \bar h^i_j + O(|\bA|^2) \\
&\partial_t |\bA|^2 = \Delta |\bA|^2 + 2|\bA|^4 - 2|\nabla \bA|^2 + O(|\bA|^3) \\
&\partial_t H = \Delta H + |\bA|^2 H + O(|\bA|)H
\end{align*}
\end{theorem}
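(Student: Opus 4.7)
The plan is to first compute the evolution of the mixed tensor $h^i_j$, then layer the perturbation $T^i_{j\nu} + D_0 \delta^i_j$ on top using Proposition \ref{prop:tensor-evolution}, and finally extract the scalar equations for $|\bA|^2$ and $H$ by contracting and converting error terms into the perturbed variables via Corollary \ref{corollary:orders}.

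I would begin with $h^i_j = g^{ik}h_{kj}$ and the dual evolution $\partial_t g^{ij} = 2Hh^{ij}$, in which case the two $2H h^i_m h^m_j$ contributions cancel the Simons-type term from $\partial_t h_{ij}$ in Proposition \ref{prop:general-evolution-equations}, yielding the clean equation $\partial_t h^i_j = \Delta h^i_j + |A|^2 h^i_j$. Since the identity term is parallel in both time and space, adding $T^i_{j\nu} + D_0 \delta^i_j$ and applying Proposition \ref{prop:tensor-evolution} produces
\[
(\partial_t - \Delta) \bar h^i_j = |A|^2 h^i_j + O(1 + |A|^2).
\]
Now I invoke Corollary \ref{corollary:orders} to trade $|A|$ for $|\bA|$ and $h$ for $\bA$: writing $h^i_j = \bar h^i_j + O(1)$ and $|A|^2 = |\bA|^2 + O(|\bA|)$, together with $\bar h^i_j = O(|\bA|)$ and $1 = O(|\bA|)$, the right-hand side collapses to $|\bA|^2 \bar h^i_j + O(|\bA|^2)$, proving the first equation.

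For the $|\bA|^2$ equation, I observe that $|\bA|^2 = \bar h^i_j \bar h^j_i$ is a genuine scalar, so its time derivative is simply $2\bar h^j_i \partial_t \bar h^i_j$. Substituting the first equation yields $2\bar h^j_i \Delta \bar h^i_j + 2|\bA|^4 + O(|\bA|^3)$, and the Bochner-type identity $\Delta(\bar h^i_j \bar h^j_i) = 2 \bar h^j_i \Delta \bar h^i_j + 2 |\nabla \bA|^2$ then rewrites the Laplacian term as $\Delta |\bA|^2 - 2|\nabla \bA|^2$. The equation for $H$ is essentially free: Proposition \ref{prop:general-evolution-equations} already supplies $\partial_t H = \Delta H + |A|^2 H$, and $|A|^2 = |\bA|^2 + O(|\bA|)$ upgrades the coefficient to $|\bA|^2$ modulo the stated $O(|\bA|)H$ defect.

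The main technical subtlety is the order bookkeeping. The crude bound $(\partial_t - \Delta) T^i_{j\nu} = O(1 + |A|^2)$ from Proposition \ref{prop:tensor-evolution} is of the same order $|\bA|^2$ as pieces like $|\bA|^2 \cdot O(1)$ hidden inside $|A|^2 h^i_j$, so one must verify these leftover terms do not pollute the quartic principal term after contracting with $2\bar h^j_i$. The fact that $|\bA| \geq 1$ together with $\bar h^i_j = O(|\bA|)$ ensures every discrepancy fits within the $O(|\bA|^2)$ (respectively $O(|\bA|^3)$) budget, and this is precisely why the perturbation was designed so that $|\bA| \geq 1$.
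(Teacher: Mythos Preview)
Your proof is correct and follows essentially the same approach as the paper, invoking Propositions \ref{prop:general-evolution-equations} and \ref{prop:tensor-evolution} and Corollary \ref{corollary:orders} at the same junctures. Your derivation of the $|\bA|^2$ equation is in fact slightly cleaner: by working with the $(1,1)$-tensor $\bar h^i_j$ and writing $|\bA|^2 = \bar h^i_j \bar h^j_i$ as a metric-free trace, you recycle the first equation directly, whereas the paper recomputes from the covariant form $g^{ik}g^{jl}\bar h_{ij}\bar h_{kl}$ and tracks the $\partial_t g^{ik}$ contributions by hand.
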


\begin{proof}

We deduce the first formula by Propositions \ref{prop:general-evolution-equations} and \ref{prop:tensor-evolution}.

We have
\begin{align*}
\frac{1}{2} (\partial_t - \Delta) |\bA|^2 
&= \frac{1}{2} \partial_t ( g^{ik} g^{kl} \bh_{ij} \bh_{kl} ) - <\Delta \bA, \bA> - |\nabla \bA|^2 \\
&= 2H h^{ik} g^{jl} \bh_{ij} \bh_{kl} + g^{ik}g^{jl} (\partial_t - \Delta) (h_{ij} + T_{ij\nu} + D g_{ij}) \bh_{kl} - |\nabla \bA|^2 \\
&= 2H h^{ik} g^{jl} (\bh_{ij} \bh_{kl} - h_{ij} \bh_{kl}) + |A|^2 <A, \bA> + O(1 + |\bA|^3) \\
&\quad - 2 D_0Hg^{ik} g^{jl} h_{ij} \bh_{kl} - |\nabla \bA|^2\\
&= |\bA|^4 - |\nabla \bA|^2 + O(|\bA|^3) .
\end{align*}

The third formula is an immediate consequence of Proposition \ref{prop:general-evolution-equations} and Corollary \ref{corollary:orders}.
\end{proof}

\begin{lemma}\label{lemma:matrix-inequality}
Let $M$ be a symmetric matrix, and $\eta > 0$.  If $|M| > (1+\eta)\tr(M)$, then
\[
|M|^2 - \max_i |\lambda_i|^2 \geq \frac{1}{c} |M|^2 .
\]
Here $\{ \lambda_i\}$ are the eigenvalues of $M$, and $c = c(n, \eta)$.
\end{lemma}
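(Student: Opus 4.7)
The plan is to prove this by a short spectral/Cauchy--Schwarz argument: concentration of $|M|^2$ onto a single eigenvalue forces $\tr M$ to be comparable to $\pm |M|$, which is incompatible with the bound $|M| > (1+\eta)\tr M$ (together with the implicit positivity $\tr M \geq 0$, which is what is actually used in the application, where $M = \bA$ and $\tr M = \bH \geq 1$).

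First I would reduce to $|M|^2 = 1$ by homogeneity: the hypothesis becomes $\tr M < 1/(1+\eta)$, and the conclusion becomes $1 - \max_i \lambda_i^2 \geq 1/c$. Set $\epsilon = 1 - \max_i \lambda_i^2$ and let $i^*$ be an index realizing the maximum, so that $\lambda_{i^*}^2 = 1-\epsilon$ and $\sum_{j \ne i^*} \lambda_j^2 = \epsilon$. The goal is to rule out $\epsilon$ being arbitrarily small.

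The main estimate is by cases on the sign of $\lambda_{i^*}$. If $\lambda_{i^*} = +\sqrt{1-\epsilon}$, Cauchy--Schwarz on the remaining $n-1$ eigenvalues gives $\sum_{j \ne i^*} \lambda_j \geq -\sqrt{(n-1)\epsilon}$, so
\[
\tr M \;\geq\; \sqrt{1-\epsilon} - \sqrt{(n-1)\epsilon},
\]
and the right-hand side tends to $1 > 1/(1+\eta)$ as $\epsilon \to 0$; thus for some $\epsilon_0 = \epsilon_0(n,\eta)$ we obtain $\tr M > 1/(1+\eta)$, a contradiction. In the opposite case $\lambda_{i^*} = -\sqrt{1-\epsilon}$, the same Cauchy--Schwarz bound reads $\sum_{j \ne i^*} \lambda_j \leq \sqrt{(n-1)\epsilon}$, and combined with $\tr M \geq 0$ this forces $\sqrt{1-\epsilon} \leq \sqrt{(n-1)\epsilon}$, i.e.\ $\epsilon \geq 1/n$. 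Taking $c = \max(1/\epsilon_0, n)$ closes the argument.

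I do not expect a real obstacle: the proof is essentially one application of Cauchy--Schwarz per sign. The only subtlety is the implicit assumption $\tr M \geq 0$ in the negative-eigenvalue case; without it the example $M = \mathrm{diag}(-1,0,\ldots,0)$ satisfies the hypothesis but violates the conclusion. One could equivalently restate the hypothesis as $|M| > (1+\eta)|\tr M|$ and run exactly the same two-case argument, which is all one needs for the applications in the sequel.
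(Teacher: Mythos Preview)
Your argument is correct and takes a genuinely different route from the paper. The paper proves the lemma by a soft compactness argument: normalize to $|M|=1$, take a sequence of putative counterexamples, extract a convergent subsequence, and observe that the limit has rank one, contradicting $|M|\geq(1+\eta)\tr M$. Your proof is instead a direct, quantitative Cauchy--Schwarz estimate, yielding an explicit constant $c=\max(1/\epsilon_0,n)$ rather than a nonconstructive one. Both arguments share exactly the same hidden assumption you flag: as written, the hypothesis $|M|>(1+\eta)\tr M$ does not exclude $M=\mathrm{diag}(-1,0,\ldots,0)$, and the paper's compactness proof also breaks at the rank-one limit when the surviving eigenvalue is negative. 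Your observation that one should read the hypothesis as $|M|>(1+\eta)|\tr M|$ (or simply assume $\tr M\geq 0$, which holds in the application to $\bA$) is correct, and applies equally to both proofs.
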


\begin{proof}
Otherwise, we can pick a sequence of counterexamples $M^{(j)}$ with $|M^{(j)}| = 1$ and
\[
|M^{(j)}|^2 - \max_i |\lambda_i^{(j)}|^2 \leq |M^{(j)}|^2/j = 1/j .
\]
Since each entry lies in the interval $[-1, 1]$, we can pick a subsequence $M^{(j')}$ converging to $M$.  Then all but one eigenvalue of $M$ is zero, contradicting $|M| \geq (1+\eta)\tr(M)$.
\end{proof}

\begin{proposition}\label{prop:A-H-gradient-term}
If $|\bA| > 2\bH$, then
\begin{equation}
|\nabla \bA|^2 - |\nabla|\bA||^2 \geq \frac{1}{c} |\nabla \bA|^2 + O(|\bA|^2)
\end{equation}
where $c = c(n)$.
\end{proposition}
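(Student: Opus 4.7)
The plan is to prove this pointwise. Fix $p \in \Sigma_t$ and diagonalize $\bA$ in an orthonormal frame at $p$, with eigenvalues $\blambda_1,\ldots,\blambda_n$. Since $|\bA| > 2\bH = 2\tr(\bA)$, Lemma \ref{lemma:matrix-inequality} with $\eta = 1$ gives $\max_i \blambda_i^2 \leq (1 - 1/c_0)|\bA|^2$ for some $c_0 = c_0(n)$, and hence each index $k$ admits a companion index $j_k \neq k$ with $\blambda_{j_k}^2 \geq |\bA|^2/c_1$, $c_1 = c_1(n)$. The Lagrange identity expresses
\begin{equation*}
|\nabla\bA|^2 - |\nabla|\bA||^2 = \sum_m\sum_{i\neq j}(\nabla_m \bh_{ij})^2 + \frac{1}{|\bA|^2}\sum_m\sum_{i<j}\big(\blambda_i \nabla_m \bh_{jj} - \blambda_j \nabla_m \bh_{ii}\big)^2,
\end{equation*}
so we only need to bound the right-hand side below by $\frac{1}{c}|\nabla\bA|^2 + O(|\bA|^2)$.

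The key additional ingredient is approximate Codazzi. Since $\bh = h + T_{\cdot\cdot\nu} + D_0 g$ and Proposition \ref{prop:tensor-evolution} with Corollary \ref{corollary:orders} give $\nabla T_{\cdot\cdot\nu} = O(|\bA|)$, the Codazzi identity $\nabla_m h_{ij} = \nabla_i h_{mj}$ yields $\nabla_m \bh_{ii} = \nabla_i \bh_{mi} + O(|\bA|)$ for $m \neq i$. Consequently each ``diagonal-index, off-gradient'' term $(\nabla_m \bh_{ii})^2$ with $m \neq i$ equals a squared off-diagonal component $(\nabla_i \bh_{mi})^2$ modulo $O(|\bA|^2)$, and these are already controlled by the first sum on the right. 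The pure-diagonal components $(\nabla_k \bh_{kk})^2$ are controlled by the deficit term for the pair $(k, j_k)$ with $m = k$: Young's inequality with small parameter $\epsilon > 0$, followed by approximate Codazzi on $\nabla_k \bh_{j_k j_k}$, gives
\begin{equation*}
\frac{(\blambda_k \nabla_k \bh_{j_k j_k} - \blambda_{j_k} \nabla_k \bh_{kk})^2}{|\bA|^2} \geq \frac{1-\epsilon}{c_1}(\nabla_k \bh_{kk})^2 - C_\epsilon (\nabla_{j_k} \bh_{k j_k})^2 - O(|\bA|^2),
\end{equation*}
producing a positive contribution of order $(\nabla_k \bh_{kk})^2/c_1$ at the cost of an off-diagonal loss absorbed by the first sum.

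Summing over $k$ and choosing $\epsilon$ sufficiently small, the remaining contribution from every term in the Lagrange identity is non-negative up to a single $O(|\bA|^2)$ remainder, and the diagonal and off-diagonal components of $\nabla \bA$ are each controlled by a definite fraction of $|\nabla\bA|^2$. This yields the desired $|\nabla\bA|^2 - |\nabla|\bA||^2 \geq \frac{1}{c}|\nabla\bA|^2 + O(|\bA|^2)$. The main obstacle is the bookkeeping around the pure-diagonal terms: one must choose the companion indices $j_k$ (using the Lemma to guarantee the pairing exists) and verify that Young's losses stay below the off-diagonal sum. This is essentially the refined Kato argument for Codazzi symmetric $2$-tensors, with the new feature that the perturbation errors $\nabla T_{\cdot\cdot\nu} = O(|\bA|)$ are squared and absorbed in the final $O(|\bA|^2)$ remainder.
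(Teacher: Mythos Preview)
Your approach is essentially correct and takes a genuinely different route from the paper, but there is one small wrinkle: you write ``choosing $\epsilon$ sufficiently small,'' yet sending the Young parameter to zero drives $C_\epsilon \to \infty$, which would overwhelm the off-diagonal sum rather than be absorbed by it. What you actually need is a \emph{fixed} choice of the Young parameter (e.g.\ $\epsilon = 1/2$, so $C_\epsilon$ is a harmless dimensional constant). Since the $n$ loss terms $(\nabla_{j_k}\bh_{kj_k})^2$ are distinct off-diagonal components, their sum is bounded by the full off-diagonal sum, and a fixed $C_\epsilon$ lets you absorb the loss while keeping a positive multiple of $(\nabla_k\bh_{kk})^2$. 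With this correction your bookkeeping closes.

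The paper's proof is shorter and avoids diagonalizing $\bA$. It starts from the identity
\[
|\bA|^2\bigl(|\nabla\bA|^2 - |\nabla|\bA||^2\bigr) = \bigl||\bA|\,\nabla_i\bh_{jk} - \nabla_i|\bA|\,\bh_{jk}\bigr|^2,
\]
splits the right-hand side into a part approximately symmetric in $(i,j)$ (using approximate Codazzi, absorbing the $O(|\bA|)$ error via Peter--Paul) and a part exactly antisymmetric in $(i,j)$, and drops the symmetric part. Choosing coordinates so that $\partial_1 \parallel \nabla|\bA|$ then yields the lower bound $\tfrac{1}{2}|\nabla|\bA||^2\bigl(|\bA|^2 - \max_i\blambda_i^2\bigr) - c|\bA|^3|\nabla|\bA||$, to which Lemma~\ref{lemma:matrix-inequality} applies directly; a final rearrangement converts the resulting bound on $|\nabla|\bA||^2$ into one on $|\nabla\bA|^2$. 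Your component-wise argument via the Lagrange identity is more explicit and makes the refined Kato mechanism visible, at the cost of tracking companion indices and Young losses; the paper's symmetric/antisymmetric trick and coordinate choice bypass that bookkeeping by reducing everything to a single scalar inequality.
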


\begin{proof}
We have that
\[
\nabla_i \bh_{jk} = \nabla_j \bh_{ik} + O(|\bA|)
\]
and therefore, if we pick orthonormal coordinates so that $\partial_1 = \nabla |\bA| / |\nabla |\bA||$ at the point in question,
\begin{align*}
|\bA|^2(|\nabla \bA|^2 - |\nabla|\bA||^2)
&= \left||\bA| \nabla_i \bh_{jk} - \nabla_i |\bA| \bh_{jk}\right|^2 \\
&= \left||\bA| \nabla_i \bh_{jk} - \frac{1}{2}(\nabla_i |\bA| \bh_{jk} + \nabla_j |\bA| \bh_{ik} ) + \frac{1}{2} (\nabla_j |\bA| \bh_{ik} - \nabla_i |\bA| \bh_{jk}) \right|^2 \\
&\geq \frac{1}{4}\left|\nabla_j |\bA| \bh_{ik} - \nabla_i |\bA| \bh_{jk}\right|^2 - c |\bA|^3 |\nabla |\bA|| \\
&\geq \frac{1}{2} |\nabla|\bA||^2 (|\bA|^2 - \sum_{k} \bh_{1k}^2) - c |\bA|^3 |\nabla |\bA|| \\
&\geq \frac{1}{2} |\nabla |\bA||^2(|\bA|^2 - \max_i |\bar\lambda_i|^2) - c |\bA|^3|\nabla|\bA|| .
\end{align*}
Here $c = c(n, S)$, and $\bar\lambda_i$ are the eigenvalues of $\bA$.

By Lemma \ref{lemma:matrix-inequality} there is a $c_n$ depending only on $n$ so that
\[
|\bA|^2 - \max_i |\bar\lambda_i|^2 \geq \frac{1}{c_n} |\bA|^2
\]
and hence by Peter-Paul we deduce that
\[
|\nabla \bA|^2 - |\nabla|\bA||^2 > \frac{1}{2c_{n}}|\nabla|\bA||^2 - c |A|^2 .
\]
This can be rearranged to deduce
\[
|\nabla\bA|^2 - |\nabla|\bA||^2 > \frac{1}{2c_n+1}|\nabla\bA|^2 - c |A|^2 \qedhere
\]
\end{proof}

\begin{corollary}\label{cor:A-evolution}
Whenever $|\bA| > 2\bH$,
\begin{equation}
(\partial_t - \Delta)|\bA| \leq |\bA|^3 - \frac{1}{c} \frac{|\nabla \bA|^2}{|\bA|} + O(|\bA|^2)
\end{equation}
where $c = c(n)$.
\end{corollary}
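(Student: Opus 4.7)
The strategy is simply to pass from the evolution equation for $|\bA|^2$ in Theorem \ref{thm:A-H-evolution} to an evolution (inequality) for $|\bA|$ itself, and then to apply the gradient estimate of Proposition \ref{prop:A-H-gradient-term} (which is precisely what the hypothesis $|\bA| > 2\bH$ is there for). There is no real obstacle — this is essentially a one-line manipulation once the previous results are in place.

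First I would use the algebraic identity
\[
\Delta |\bA|^2 = 2 |\bA|\, \Delta |\bA| + 2 |\nabla |\bA||^2,
\]
together with $\partial_t |\bA|^2 = 2|\bA|\,\partial_t |\bA|$, to rewrite the second displayed equation in Theorem \ref{thm:A-H-evolution} as
\[
2|\bA|\,(\partial_t - \Delta)|\bA| = 2|\bA|^4 - 2\bigl(|\nabla \bA|^2 - |\nabla|\bA||^2\bigr) + O(|\bA|^3).
\]
Since $|\bA| \geq 1$ by the choice of $D_0$, I may divide through by $2|\bA|$ and absorb the resulting $O(|\bA|^2)$ remainder to obtain
\[
(\partial_t - \Delta)|\bA| = |\bA|^3 - \frac{|\nabla \bA|^2 - |\nabla |\bA||^2}{|\bA|} + O(|\bA|^2).
\]

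Next I apply Proposition \ref{prop:A-H-gradient-term}, which is valid since we are assuming $|\bA| > 2\bH$. That proposition provides
\[
|\nabla \bA|^2 - |\nabla |\bA||^2 \geq \frac{1}{c} |\nabla \bA|^2 + O(|\bA|^2),
\]
so dividing by $|\bA| \geq 1$ yields
\[
\frac{|\nabla \bA|^2 - |\nabla |\bA||^2}{|\bA|} \geq \frac{1}{c}\frac{|\nabla \bA|^2}{|\bA|} + O(|\bA|),
\]
and the $O(|\bA|)$ term is harmlessly $O(|\bA|^2)$ again because $|\bA| \geq 1$. Substituting into the previous display and moving the error to the right-hand side gives exactly the stated inequality. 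The only subtlety worth flagging is the consistent use of $|\bA| \geq 1$ to promote lower-order error terms into the single $O(|\bA|^2)$ bucket; beyond that the derivation is mechanical.
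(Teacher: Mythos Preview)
Your proof is correct and essentially identical to the paper's: both derive $(\partial_t - \Delta)|\bA| = |\bA|^3 - \frac{|\nabla\bA|^2 - |\nabla|\bA||^2}{|\bA|} + O(|\bA|^2)$ from the evolution of $|\bA|^2$ in Theorem~\ref{thm:A-H-evolution} and then invoke Proposition~\ref{prop:A-H-gradient-term}. The only cosmetic difference is that the paper computes $(\partial_t - \Delta)\sqrt{|\bA|^2}$ directly via the chain rule, whereas you rearrange the identities $\partial_t|\bA|^2 = 2|\bA|\,\partial_t|\bA|$ and $\Delta|\bA|^2 = 2|\bA|\,\Delta|\bA| + 2|\nabla|\bA||^2$; these are equivalent.
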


\begin{proof}
We have (recalling $|\bA| \geq 1$)
\begin{align*}
(\partial_t - \Delta) |\bA|
&= (\partial_t - \Delta) \sqrt{|\bA|^2} \\
&= \frac{1}{2} \frac{(\partial_t - \Delta)|\bA|}{|\bA|} + \frac{1}{4} \frac{|\nabla|\bA|^2|^2}{|\bA|^3} \\
&= |\bA|^3 + \frac{|\nabla|\bA||^2 - |\nabla \bA|^2}{|\bA|} + O(|\bA|^2) .
\end{align*}
Now apply Proposition \ref{prop:A-H-gradient-term}.
\end{proof}


\section{Boundary derivatives}
\label{section:boundary}

Fix a $p \in \partial\Sigma$.  Choose coordinates so that $\partial_1 \equiv N$ along $\partial\Sigma$, $(\partial_i)_{i > 1}$ are orthonormal geodesic normal coordinates on $\partial\Sigma$ at $p$, and the integral curves of $\partial_1$ are geodesics.

\begin{lemma} \label{lemma:boundary-derivatives}
At $p$ we have, for $i, j > 1$,
\begin{align*}
&\nabla_1 h_{ij} = h_{ij} k_{\nu\nu} + h_{11} k_{ij} - k_{i\alpha} h_{j\alpha} - k_{j\alpha}h_{i\alpha} - \nabla^S_\nu k_{ij} \\
&\nabla_1 h_{11} = 2(k_{\alpha\beta} h_{\alpha\beta} + h_{11}k_{\nu\nu}) - Kh_{11} + \nu(K) - \nabla^S_\nu k_{\nu\nu}
\end{align*}
where $\alpha, \beta$ are summed over $2, \ldots, n$, and $K$ is the mean curvature of the barrier $S$.
\end{lemma}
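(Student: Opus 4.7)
My plan is to use the Codazzi equation in flat ambient $R^{n+1}$, which gives $\nabla_1 h_{ij} = \nabla_i h_{1j}$, to reduce the first formula to differentiating the boundary identity $h(N, \partial_j) = -k(\nu, \partial_j)$ tangentially in direction $\partial_i$. Writing out each side with the product rule produces covariant-derivative correction terms, and these can be computed at $p$ from the ambient Gauss and Weingarten formulas, projected onto $T\Sigma$ for the LHS and onto $TS$ for the RHS. With the chosen coordinates one obtains, at $p$, for $i,j>1$ and $\alpha$ summed over $2,\dots,n$:
\begin{align*}
\nabla^\Sigma_{\partial_i} \partial_1 &= \sum_\alpha k_{i\alpha}\partial_\alpha, & \nabla^\Sigma_{\partial_i} \partial_j &= -k_{ij}\partial_1, \\
\nabla^S_{\partial_i} \nu &= \sum_\alpha h_{i\alpha}\partial_\alpha, & \nabla^S_{\partial_i} \partial_j &= -h_{ij}\nu.
\end{align*}
Substituting these into the two product-rule expansions and equating $\partial_i h(\partial_1, \partial_j) = -\partial_i k(\nu, \partial_j)$ cancels the ordinary derivatives and yields every term in the claimed formula for $\nabla_1 h_{ij}$ directly, except for the intrinsic gradient of $k$, which appears in the form $(\nabla^S_{\partial_i} k)(\nu, \partial_j)$. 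The Codazzi equation on $S \subset R^{n+1}$ says $\nabla^S k$ is fully symmetric, so $(\nabla^S_{\partial_i} k)(\nu, \partial_j) = (\nabla^S_\nu k)(\partial_i, \partial_j) = \nabla^S_\nu k_{ij}$, completing the first identity.

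For $\nabla_1 h_{11}$ the idea is to combine the already-known boundary derivative $\nabla_1 H = N(H) = k_{\nu\nu} H$ (Proposition \ref{prop:H-boundary-derivative}) with the identity $\nabla_1 H = \nabla_1 h_{11} + \sum_{\alpha>1} \nabla_1 h_{\alpha\alpha}$ from the orthonormal frame at $p$. I substitute the first formula at $i = j = \alpha$, sum over $\alpha>1$, and use the two decompositions $K = k_{\nu\nu} + \sum_{\alpha>1} k_{\alpha\alpha}$ and $\nu(K) = \nabla^S_\nu k_{\nu\nu} + \sum_{\alpha>1} \nabla^S_\nu k_{\alpha\alpha}$ arising from the $T_p S$ orthonormal basis $\{\partial_\alpha\}_{\alpha>1} \cup \{\nu\}$. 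The $k_{\nu\nu} H$ contributions cancel and the remaining terms reassemble into the stated second formula.

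The main obstacle is sign bookkeeping. With the paper's convention $h(X, Y) = -\langle \bar\nabla_X Y, \nu\rangle = \langle Y, \bar\nabla_X \nu\rangle$, the second fundamental form of $\partial\Sigma \subset \Sigma$ relative to $N$ restricts to $+k|_{T\partial\Sigma}$, and the Gauss formula then produces $\nabla^\Sigma_{\partial_i}\partial_j = -k_{ij}\partial_1$ with a minus sign. This sign is precisely what converts what would otherwise be $-k_{ij} h_{11}$ into the stated $+k_{ij} h_{11}$ term in the first formula. Keeping the two connections $\nabla^\Sigma$ and $\nabla^S$ conceptually separate, and applying each product rule in its correct ambient so that the ambient extension of $k$ never actually enters the final answer, is what makes the argument clean.
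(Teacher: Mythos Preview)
Your argument is correct. For $\nabla_1 h_{11}$ you follow exactly the paper's route: trace the first formula and combine with $N(H)=k_{\nu\nu}H$. For $\nabla_1 h_{ij}$ your approach differs from the paper's. The paper computes $\partial_1 h_{ij}$ directly from $h_{ij}=-\langle\partial_i\partial_j F,\nu\rangle$, expanding $\partial_i\partial_j N = \partial_i(\bar\nabla_{\partial_j}\nu_S)$ in ambient coordinates and then subtracting the Christoffel corrections $h(\nabla^\Sigma_{\partial_1}\partial_i,\partial_j)+h(\partial_i,\nabla^\Sigma_{\partial_1}\partial_j)$ to pass from $\partial_1 h_{ij}$ to $\nabla_1 h_{ij}$. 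You instead invoke Codazzi on $\Sigma$ to write $\nabla_1 h_{ij}=\nabla_i h_{1j}$, then differentiate the boundary identity $h_{1j}=-k_{\nu j}$ tangentially along $\partial\Sigma$, applying the product rule once in $\Sigma$ (for $h$) and once in $S$ (for $k$), and finally use Codazzi on $S$ to rewrite $(\nabla^S_{\partial_i}k)(\nu,\partial_j)$ as $\nabla^S_\nu k_{ij}$. Your route is more structural and makes transparent why each term appears: the four curvature-product terms come from the four connection corrections in the two product rules, and the $\nabla^S_\nu k$ term is the genuine derivative of $k$. The paper's computation is slightly more self-contained (no appeal to Codazzi on either hypersurface) but correspondingly more opaque about the origin of individual terms.
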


\begin{proof}
We calculate for $i, j > 1$
\begin{align*}
\partial_1 h_{ij}
&= -<\partial_i\partial_j N, \nu> - <\partial_i\partial_j F, \partial_1 \nu> \\
&= -<\partial_i(k_j^\alpha\partial_\alpha + k_j^\nu \nu), \nu> + h_{11}k_{ij} \\
&= k_{j\alpha} h_{i\alpha} - \nabla^S_i k_j^\nu - k_{\nabla^S_i \partial j}^\nu - k_j^{\nabla^S_i \nu} + h_{11}k_{ij} \\
&= -\nabla^S_\nu k_{ij} + h_{ij}k_{\nu\nu} + h_{11} k_{ij}
\end{align*}
and hence
\begin{align*}
\nabla_1 h_{ij}
&= \partial_1 h_{ij} - h((\partial_i \partial_1 F)^T, \partial_j) - h(\partial_i, (\partial_j \partial_1)^T) \\
&= -k_{i\alpha}h_{j\alpha} - k_{j\alpha}h_{i\alpha} - \nabla^S_\nu k_{ij} + h_{ij}k_{\nu\nu} + h_{11}k_{ij}
\end{align*}

We calculate, using Proposition \ref{prop:H-boundary-derivative},
\begin{align*}
N(H) &= k_{\nu \nu} H \\
&= \nabla_1 h_{11} + tr_{\partial \Sigma}(\nabla_1 h) \\
&= \nabla_1 h_{11} - 2k_{\alpha\beta}h_{\alpha\beta} - tr_{\partial\Sigma}(\nabla^S_\nu k_{ij}) + (H - h_{11})k_{\nu\nu} + h_{11}(K - k_{\nu\nu}) \\
&= \nabla_1 h_{11} - 2k_{\alpha\beta}h_{\alpha\beta} - 2h_{11}k_{\nu\nu} - \nu(K) + \nabla_\nu^S k_{\nu\nu} + Hk_{\nu\nu} + Kh_{11}
\end{align*}
and the Lemma follows.
\end{proof}

\begin{theorem}\label{theorem:boundary-derivatives}
At $p$, for $i, j > 1$,
\begin{align*}
&\nabla_1 \bh_{ij} = O(|\bA|) \\
&\nabla_1 \bh_{11} = O(|\bA|) .
\end{align*}
\end{theorem}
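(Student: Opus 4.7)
The plan is to decompose $\bh_{ij} = h_{ij} + T_{ij\nu} + D_0 g_{ij}$ and treat each piece separately. Since $\nabla g \equiv 0$ and Lemma \ref{lemma:boundary-derivatives} already supplies $\nabla_1 h_{ij} = O(|A|) = O(|\bA|)$ and $\nabla_1 h_{11} = O(|A|) = O(|\bA|)$, the entire content of the theorem reduces to establishing $\nabla_1 T_{ij\nu}|_p = O(|\bA|)$ for $i,j > 1$ and $\nabla_1 T_{11\nu}|_p = O(|\bA|)$.

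For this I would recycle the identity derived in the proof of Proposition \ref{prop:tensor-evolution}:
$$\nabla_p T_{ij\nu} = \bar\nabla_p T_{ij\nu} + h_{pk} T_{ijk} - h_{pi} T_{\nu j\nu} - h_{pj} T_{i\nu\nu},$$
and set $p = 1$. The crucial structural point is that at the boundary point, the free-boundary condition forces $\nu_S = N = \partial_1$ and orthogonal contact forces $\langle \nu, \nu_S\rangle = 0$. Thus every factor of the form $\langle \partial_j, \nu_S\rangle$ appearing in the definition $T(X,Y,Z) = k(X,Z)\langle Y, \nu_S\rangle + k(Y,Z)\langle X, \nu_S\rangle$ equals $\delta_{j1}$, and any factor $\langle \nu, \nu_S\rangle$ simply vanishes.

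In the off-diagonal case $i, j > 1$, this kills every undifferentiated occurrence of $T$ in the identity: $T_{ijk}|_p = k_{ik}\delta_{j1} + k_{jk}\delta_{i1} = 0$, and likewise $T_{\nu j\nu}|_p = T_{i\nu\nu}|_p = 0$. The identity thus collapses to $\nabla_1 T_{ij\nu}|_p = \bar\nabla_1 T_{ij\nu}|_p$. Expanding the ambient derivative by the product rule, every term still carries one of the vanishing inner products $\langle \partial_i, \nu_S\rangle$, $\langle \partial_j, \nu_S\rangle$, or $\langle \nu, \nu_S\rangle$, paired with a bounded ambient derivative of $k$ or $\nu_S$; hence $\bar\nabla_1 T_{ij\nu}|_p = O(1) = O(|\bA|)$. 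For $\nabla_1 T_{11\nu}|_p$ the first slot no longer kills anything, so the identity instead yields
$$\nabla_1 T_{11\nu}|_p = \bar\nabla_1 T_{11\nu}|_p + 2 h_{1k} k_{1k} - 2 h_{11} k_{\nu\nu},$$
each of which is manifestly $O(|A| + 1) = O(|\bA|)$.

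This is essentially a careful bookkeeping exercise; the only mild subtlety will be making sure that in expanding $\bar\nabla_1 T_{ij\nu}$ every potentially unbounded ambient derivative of the extensions of $k$ or $\nu_S$ is paired against one of the vanishing inner products, so that no stray $O(|\bA|^2)$ contribution sneaks in. Once that is verified, combining with Lemma \ref{lemma:boundary-derivatives} gives both estimates at once.
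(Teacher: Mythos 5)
Your proposal is correct and takes essentially the same route as the paper: decompose $\bh_{ij} = h_{ij} + T_{ij\nu} + D_0 g_{ij}$, apply Lemma \ref{lemma:boundary-derivatives} to the $h$ part, and control the $T_{ij\nu}$ part via Proposition \ref{prop:tensor-evolution}. However, the detailed bookkeeping you do is superfluous: Proposition \ref{prop:tensor-evolution} already states $\nabla T_{ij\nu} = O(1+|A|)$ at \emph{every} point, boundary or not, because $\bar\nabla T$ and the undifferentiated components $T_{ijk}$, $T_{\nu j\nu}$, $T_{i\nu\nu}$ are all $O(1)$ — they involve only the fixed smooth ambient extensions of $k$ and $\nu_S$, not the second fundamental form of $\Sigma_t$. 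So there are no boundary cancellations to track, and your closing worry about unbounded ambient derivatives of $k$ or $\nu_S$ contributing a stray $O(|\bA|^2)$ has no teeth: bounded geometry of $S$ and a fixed choice of extensions make $\bar\nabla T = O(1)$ outright, and the only unbounded quantity, $h_{ij}$, enters linearly.
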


\begin{proof}
Follows directly from Lemma \ref{lemma:boundary-derivatives} using Theorem \ref{prop:tensor-evolution}.
\end{proof}

\begin{theorem}
We have that
\begin{align*}
&N |\bA| = O(|\bA|) .
\end{align*}
\end{theorem}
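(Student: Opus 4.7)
The plan is to reduce $N|\bA|$ to $N|\bA|^2 / 2|\bA|$ and then exploit the fact that the perturbation $\bA$ was designed precisely so that the ``off-diagonal'' boundary components $\bh_{1j}$ (with $j>1$) vanish along $\partial\Sigma$. This is the only reason the naively problematic term $\nabla_1 \bh_{1j}$, which would involve $\nabla_X h_{NN}$ via Codazzi and is not controlled by lower-order data, never appears.

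First I would set up the same boundary-adapted coordinates as in Section \ref{section:boundary}: $\partial_1 = N$ along $\partial\Sigma$, $(\partial_i)_{i>1}$ an orthonormal geodesic frame on $\partial\Sigma$ at $p$, $g_{ij}(p) = \delta_{ij}$. The key preliminary computation is that at any point of $\partial\Sigma$,
\[
\bh_{1j} = h(N,\partial_j) + T(N,\partial_j,\nu) + D_0\, g(N,\partial_j) = -k_{j\nu} + k_{j\nu} + 0 = 0 \quad (j>1),
\]
using $h_{NX} = -k_{\nu X}$ from the earlier proposition together with the explicit form of $T(X,Y,Z) = k(X,Z)g(Y,\nu_S) + k(Y,Z)g(X,\nu_S)$ and the identification $N \equiv \nu_S$ along $\partial\Sigma$. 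This observation is the whole point of the perturbation and is the main structural input.

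Next I compute $N|\bA|^2 = 2\bh^{ij}\nabla_N \bh_{ij}$, which in the chosen orthonormal frame at $p$ reads $2\sum_{i,j}\bh_{ij}(p)\,\nabla_1\bh_{ij}(p)$. Split the sum by whether each index equals $1$ or is $>1$. The mixed terms $\bh_{1j}\nabla_1\bh_{1j}$ (and their transposes) vanish at $p$ because $\bh_{1j}(p)=0$; only $\bh_{11}\nabla_1\bh_{11}$ and $\sum_{i,j>1}\bh_{ij}\nabla_1\bh_{ij}$ survive. These are exactly the two cases covered by Theorem \ref{theorem:boundary-derivatives}, which gives $\nabla_1\bh_{11}=O(|\bA|)$ and $\nabla_1\bh_{ij}=O(|\bA|)$ for $i,j>1$. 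Combined with the trivial bound $|\bh_{ij}|\leq|\bA|$, this yields $N|\bA|^2 = O(|\bA|^2)$, and then
\[
N|\bA| = \frac{N|\bA|^2}{2|\bA|} = O(|\bA|),
\]
using $|\bA|\geq 1$ to ensure we are not dividing by zero.

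I do not expect any real obstacle here: the hard work was already done in establishing Theorem \ref{theorem:boundary-derivatives}, and the only tricky point is recognizing that the perturbation kills the boundary component $\bh_{1j}$ so the missing derivative estimate for $\nabla_1\bh_{1j}$ is multiplied by zero. If one wanted to be fully coordinate-free, one could alternatively write the argument as $\bA = (\bA - \bh_{11}e_1\otimes e_1 - \bh_\top) + \bh_{11}e_1\otimes e_1 + \bh_\top$ where $\bh_\top$ is the tangential block, and note that the cross block is zero on $\partial\Sigma$; the computation is identical.
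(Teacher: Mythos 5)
Your proof is correct and is essentially the paper's own argument, just written out in full: the paper's one-line proof cites precisely the two ingredients you isolate, namely $\bar h_{N,X}=0$ for $X\in T_p\partial\Sigma$ (which you verify directly from the definition of $T$ and the proposition $h_{N,X}=-k_{\nu X}$) and Theorem \ref{theorem:boundary-derivatives} for the surviving diagonal-block derivatives, after which $N|\bA|=N|\bA|^2/(2|\bA|)$ and $|\bA|\geq 1$ close the argument. Nothing is missing.
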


\begin{proof}
Immediate from Theorem \ref{theorem:boundary-derivatives} and that $\bar h_{N, X} = 0$ when $X \in T_p \partial\Sigma$.
\end{proof}


\section{Controlling $|\bA|$}
\label{section:controlling-A}

In this section we prove the following Theorem, which will imply Theorem \ref{theorem:A-H-bound-intro}.

\begin{theorem}\label{theorem:A-H-bound}
There are constants $\alpha = \alpha(S, n) \geq 0$ and $C = C(S, \Sigma_0)$ so that
\begin{equation}\label{eqn:A-H-bound}
\max_{\Sigma_t} \frac{|\bA|}{H} \leq C e^{\alpha t}
\end{equation}
for all time of existence.
\end{theorem}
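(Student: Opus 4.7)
The plan is to run a parabolic maximum principle on
\[
F = \frac{|\bA| + a}{H}\phi,
\]
where $a = a(S)$ is a large constant and $\phi$ is a smooth, bounded barrier function on $R^{n+1}$ chosen so that $F$ satisfies a strict Hopf-type inequality along $\partial\Sigma_t$. The stated bound $|\bA|/H \leq Ce^{\alpha t}$ then follows from $F \leq F(0)e^{\alpha t}$; finiteness of $F(0) \leq C(S,\Sigma_0)$ holds by compactness of $\Sigma_0$ and strict positivity of $H|_{t=0}$.

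First I would derive the evolution equation. Combining Theorem \ref{thm:A-H-evolution} and Corollary \ref{cor:A-evolution} with the quotient rule, the cubic reaction terms $|\bA|^3/H$ (from $|\bA|$) and $(|\bA|+a)|\bA|^2/H$ (from $H$) differ by exactly $-a|\bA|^2/H$, giving in the region $\{|\bA| > 2\bH\}$
\[
(\partial_t - \Delta)\left(\tfrac{|\bA|+a}{H}\right) \leq -\tfrac{a|\bA|^2}{H} - \tfrac{|\nabla\bA|^2}{cH|\bA|} + O\!\left(\tfrac{(1+|\bA|)|\bA|}{H}\right) + \tfrac{2}{H}\nabla\!\left(\tfrac{|\bA|+a}{H}\right)\cdot\nabla H.
\]
For $a=a(S)$ large the perturbation error is absorbed into $-\tfrac{a}{2}|\bA|^2/H$ using $|\bA|\geq 1$. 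On the complementary region $\{|\bA|\leq 2\bH\}$ the analogous estimate holds without the good gradient term, but there $|\bA|\leq 2H + O(1)$ from $\bH = H + O(1)$, so $|\bA|/H$ is controlled directly.

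Multiplying by $\phi$ and evaluating at an interior maximum of $F$, the relation $\nabla F = 0$ produces the structural identity
\[
\frac{\nabla H}{H} = \frac{\nabla|\bA|}{|\bA|+a} + \frac{\nabla\phi}{\phi}.
\]
Substituting this into the product rule $(\partial_t-\Delta)(u\phi) = \phi(\partial_t-\Delta)u + u(\partial_t-\Delta)\phi - 2\nabla u \cdot \nabla\phi$ causes the $|\nabla\phi|^2/\phi^2$ contributions from the quotient-rule cross term $(2/H)\nabla u \cdot \nabla H$ and from the product-rule cross term $-2\nabla u\cdot\nabla\phi$ to cancel exactly. The only surviving new term is $-2\nabla\phi \cdot \nabla|\bA|/H$, which I would dominate via Young's inequality against half of the good term $-|\nabla\bA|^2/(cH|\bA|)$ provided by Proposition \ref{prop:A-H-gradient-term}. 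The result is $(\partial_t - \Delta)F \leq \alpha F$ at every interior maximum with $t>0$, for some $\alpha = \alpha(S,n)$.

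The remaining, and in my view main, obstacle is the boundary. Without a multiplicative barrier, Theorem \ref{theorem:boundary-derivatives} gives only $N|\bA| = O(|\bA|)$ and Proposition \ref{prop:H-boundary-derivative} gives $NH/H = k_{\nu\nu} = O(1)$, so $NF/F = O(1) + N\phi/\phi$ has no a priori sign. I would take $\phi = e^{-B\psi}$, where $\psi:R^{n+1}\to R$ is a fixed smooth function with $\bar\nabla\psi\cdot\nu_S \geq 1$ on $S$ (available from the bounded-geometry assumption on $S$, for instance by extending the signed distance to $S$ off a tubular neighborhood) and $B = B(S)$ is chosen large. Then $N\phi/\phi \leq -B$ along $\partial\Sigma_t$, so
\[
\frac{NF}{F} = \frac{N|\bA|}{|\bA|+a} - k_{\nu\nu} + \frac{N\phi}{\phi} \leq c(S) - B < 0,
\]
which contradicts $NF \geq 0$ at a boundary maximum. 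Integrating the resulting ODE inequality for $F_{\max}(t)$ gives $F \leq F(0)e^{\alpha t}$, and Corollary \ref{corollary:orders} then converts the bound on $|\bA|/H$ into the bound on $|A|/H$ of Theorem \ref{theorem:A-H-bound-intro}.
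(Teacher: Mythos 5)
Your approach is essentially the one the paper takes: both consider the quantity $(|\bA|+a)\,\phi/H$ for a large constant $a$ and a barrier $\phi$ built from an exponential of a function with $\nu_S$-derivative bounded below near $S$, use the good gradient term from Proposition \ref{prop:A-H-gradient-term} in the region $\{|\bA|>2\bH\}$ together with the cancellation of the barrier cross-terms at a spatial maximum, and make the boundary derivative strictly negative by choosing the exponent in $\phi$ large. The only cosmetic difference is that the paper folds the factor $e^{-\alpha t}$ into $\phi$ and shows the quantity is uniformly bounded, while you keep $\phi$ time-independent and aim for the maximum to grow exponentially; these are equivalent.

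There is, however, a genuine gap in your handling of the region $\{|\bA|\leq 2\bH\}$. You write that there ``$|\bA|\leq 2H+O(1)$ \ldots so $|\bA|/H$ is controlled directly.'' It is not: that inequality gives $|\bA|/H\leq 2+O(1)/H$, which is only a bound if $H$ is bounded away from $0$. Since $S$ is not assumed convex, Proposition \ref{prop:H-boundary-derivative} allows $H$ to decrease, so an a priori lower bound on $H$ is required. The paper supplies exactly this as a preliminary step: it shows $\min_{\Sigma_t} H/\phi$ is non-decreasing, using the same barrier $\phi$ (boundary derivative $N(H/\phi)\geq b\,H/\phi$ forces the spatial minimum to be interior, and the reaction term $|A|^2 H/\phi\geq 0$ gives monotonicity). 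This gives $H\geq c(\Sigma_0)\phi$, i.e.\ $H$ decays at most exponentially, and it is precisely this bound that makes the constant $C$ in the paper's proof large enough that $f\geq C$ forces $|\bA|>2\bH$, so the maximum-principle argument only ever has to run in the good region. Without this step your ODE inequality for $F_{\max}(t)$ simply does not close at a bad-region maximum, because the evolution inequality there produces the uncontrolled cross term $-\tfrac{2}{H}\nabla\phi\cdot\nabla|\bA|$ with no gradient term to absorb it. Add the $\min_{\Sigma_t}H/\phi$ monotonicity as a first step and your argument becomes complete and matches the paper's.
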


\begin{remark}
If $S$ is convex, then $H$ is non-decreasing, and by carefully calculating the normal derivative $N|\bA|$ one can take $\alpha = 0$ in \eqref{eqn:A-H-bound}.
\end{remark}

For arbitrary function $f$ and $g$, recall the useful formula
\begin{equation}\label{eqn:useful-formula}
(\partial_t - \Delta) \frac{f}{g} = \frac{(\partial_t - \Delta) f}{g} - \frac{f}{g^2} (\partial_t - \Delta) g + \frac{2}{g} <\nabla \frac{f}{g}, \nabla g> .
\end{equation}

\begin{proof}[Proof of Theorem \ref{theorem:A-H-bound}]

Recall that
\begin{align*}
&|N H| \leq b H \\
&|N |\bA|| \leq b |\bA| 
\end{align*}
for some constant $b = b(S)$.

Let $d : R^{n+1} \to [-1,1]$ be a smooth function such that $d \equiv 0$ on $S$, and $\nu_S(d) \geq 1$.  If a constant depends on $d$ we will only say it depends on $S$.  Let $\phi : R^{n+1} \to R_+$ be the smooth function
\[
\phi(x) = e^{-\alpha t -2 b d}
\]
so that $\nu_S(\phi) \leq -2b \phi$.

We have, in geodesic orthonormal coordinates, 
\begin{align}
(\partial_t - \Delta) \phi
&= -\alpha \phi + \bar\nabla \phi \cdot (\partial_t F - \Delta F) - \sum_i \bar\nabla^2\phi(\partial_i F, \partial_i F) \nonumber \\
&= -\alpha \phi - \tr_{T\Sigma} (\bar\nabla^2 \phi) \nonumber \\
&= (-\alpha + O(1)) \phi . \nonumber
\end{align}
Choose $\alpha = \alpha(S, n)$ so that $(\partial_t - \Delta)\phi < 0$.

We first show the quantity $\min_{\Sigma_t} H/\phi$ is non-decreasing.  First calculate
\begin{equation}\label{eqn:H-phi-boundary}
N \frac{H}{\phi} \geq b \frac{H}{\phi}, 
\end{equation}
so any spatial minimum is interior.  And by our choice of $\alpha$ we obtain
\begin{equation}\label{eqn:H-phi-evolution}
(\partial_t - \Delta) \frac{H}{\phi} \geq |A|^2 \frac{H}{\phi} + \frac{2}{\phi} <\nabla \frac{H}{\phi}, \nabla \phi> .
\end{equation}

In particular, at any spatial minimum $p$ of $H/\phi$, we must have
\[
\partial_t \frac{H}{\phi}|_p \geq |A|^2 \frac{H}{\phi} \geq 0 .
\]

We now consider the quantity
\[
f = \frac{|\bA| + a}{H/\phi}
\]
for some positive constant $a$ to be determined.  We show $\max_{\Sigma_t} f$ is non-increasing when $f$ is sufficiently big.  At the boundary we have by \eqref{eqn:H-phi-boundary}
\begin{equation}\label{eqn:A-H-boundary}
N f \leq \frac{b |\bA|}{H/\phi} - b H/\phi \frac{|\bA|}{(H/\phi)^2} \leq 0 .
\end{equation}
So any spatial maximum of $f$ is interior.

From Corollary \ref{cor:A-evolution} and equation \eqref{eqn:H-phi-evolution}, whereever $|\bA| > 2\bH$ we have the evolution equations
\begin{align*}
&(\partial_t - \Delta) |\bA| \leq |\bA|^3 - \frac{1}{c_n} \frac{|\nabla |\bA||^2}{|\bA|} + c |\bA|^2 \\
&(\partial_t - \Delta) \frac{H}{\phi} \geq |\bA|^2 \frac{H}{\phi} - c |\bA| \frac{H}{\phi} + \frac{2}{\phi} <\nabla \frac{H}{\phi}, \nabla \phi> .
\end{align*}
Here $c = c(S, n)$ and $c_n = c_n(n)$.

We calculate
\begin{align*}
(\partial_t - \Delta) f 
&\leq \frac{1}{H/\phi} \left( |\bA|^3 - \frac{1}{c_n} \frac{|\nabla |\bA||^2}{|\bA|} + c|\bA|^2 \right) - f (|\bA|^2 - c|\bA|) \\
&\quad - \frac{2f}{H} <\nabla \frac{H}{\phi}, \nabla \phi> + \frac{2}{H/\phi} <\nabla f, \nabla \frac{H}{\phi} > \\
&\leq \frac{\phi}{H} \left\{ |\bA|^3 - \frac{1}{c_n} \frac{|\nabla |\bA||^2}{|\bA|} + c|\bA|^2 - (|\bA| + a)(|\bA|^2 - c|\bA|) \right\} \\
&\quad + \frac{2}{H} |\nabla |\bA|| |\nabla \phi| + <\nabla f, \frac{2}{\phi} \nabla \phi + \frac{2\phi}{H} \nabla \frac{H}{\phi}> \\
&\leq \frac{\phi}{H} \left\{  (2c - a) |\bA|^2 + ac |\bA| - \frac{1}{2c_n} \frac{|\nabla |\bA||^2}{|\bA|} + 2c_n \frac{|\nabla \phi|^2}{\phi^2}|\bA| \right\} \\
&\quad + <\nabla f, \frac{2}{\phi} \nabla \phi + \frac{2\phi}{H} \nabla \frac{H}{\phi}> .
\end{align*}

Notice that $\frac{|\nabla \phi|^2}{\phi^2} = O(1)$.  By the above calculations and equation \eqref{eqn:A-H-boundary}, if $f$ attains its spatial maximum at a point $p$, and $|\bA| > 2\bH$ at this point, then
\begin{align*}
\partial_t f|_p \leq \frac{\phi}{H}\left\{ (c - a)|\bA|^2 + ca|\bA| \right\} \leq 0
\end{align*}
provided we choose $a = 2c$ and ensure $|\bA| > 2c$.

We still need to prove this implies Theorem \ref{theorem:A-H-bound}.  Recall that $\bar H = H + O(1)$.  Using that $\min_{\Sigma_t} H/\phi$ is non-decreasing, we have
\[
\bar H \leq H + c \leq c \frac{H}{\phi} \left( 1 + \frac{1}{\min_{\Sigma_0} H/\phi} \right) .
\]

Define the constant
\[
C = \frac{4c}{\min_{\Sigma_0} H/\phi} + 2c\left( 1 + \frac{1}{\min_{\Sigma_0} H/\phi} \right).
\]
Then if $f \geq C$, we have
\begin{align*}
|\bA| 
&\geq C \frac{H}{\phi} - 2c \\
&\geq 2c ( 1 + (\min H/\phi)^{-1} ) \frac{H}{\phi} + 4c - 2c \\
&\geq 2 \bar H + 2c.
\end{align*}

We deduce that
\[
\frac{|\bA|}{H} \leq \frac{f}{\phi} \leq \phi^{-1} \max\{ C, \max_{\Sigma_0} f\},
\]
which proves the Theorem.
\end{proof}

\begin{proposition}\label{prop:finite-dist}
There are constants $\alpha = \alpha(S, n)$, $C = C(S, \Sigma_0)$ so that
\[
\max_{x \in \Sigma_t} \dist(x, 0) \leq C e^{\alpha t} 
\]
for all time of existence.  In particular, if $T < \infty$, then we can find a radius $R$ satisfying
\[
\bigcup_{t \in [0, T)} \Sigma_t \subset B_R(0) .
\]
\end{proposition}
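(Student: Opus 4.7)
The plan is to apply the parabolic maximum principle to (a modification of) $|F|^2$. The standard identities $\Delta_\Sigma|F|^2 = 2n - 2H\,F\cdot\nu$ and $\partial_t|F|^2 = -2H\,F\cdot\nu$ give $(\partial_t - \Delta)|F|^2 = -2n$, so the interior evolution is already favorable; the only genuine difficulty is the boundary derivative $N|F|^2 = 2\,F\cdot\nu_S$, which has uncontrolled sign and a priori grows linearly in $|F|$. I would correct this exactly as in the proof of Theorem \ref{theorem:A-H-bound}, by introducing the barrier $\phi = e^{-\alpha t - 2 d(F)}$, where $d : R^{n+1} \to [-1,1]$ is the function from that proof (with $d \equiv 0$ on $S$ and $\nu_S(d)\geq 1$) and $\alpha = \alpha(S,n)$ is to be chosen.

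I would then apply the maximum principle to the quantity $u = (|F|^2 + K)\phi$ for a constant $K = K(S,n)$. On $\partial\Sigma_t$, the relation $N\phi = -2\phi\,\nu_S(d) \leq -2\phi$ combined with $|F\cdot\nu_S| \leq |F|$ yields
\[
Nu \leq 2\phi\bigl(|F| - |F|^2 - K\bigr) \leq 0
\]
for any $K \geq 1/4$, by completing the square in $|F|$. In the interior, the product rule $(\partial_t - \Delta)(fg) = g(\partial_t - \Delta)f + f(\partial_t - \Delta)g - 2\nabla f\cdot\nabla g$, together with the computation $(\partial_t - \Delta)\phi = (-\alpha + O(1))\phi$ from Theorem \ref{theorem:A-H-bound} and the pointwise bound $|\nabla\phi| \leq O(\phi)$, gives
\[
(\partial_t - \Delta)u \leq -2n\phi + (-\alpha + O(1))(|F|^2 + K)\phi + C|F|\phi.
\]
A Peter-Paul absorption $C|F|\phi \leq \tfrac{\alpha}{4}|F|^2\phi + (C^2/\alpha)\phi$ then makes this $\leq 0$ as soon as $\alpha$ is large (depending on $S,n$) and $K$ is large (depending on $\alpha, S,n$).

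With both signs arranged, the parabolic maximum principle gives $\max_{\Sigma_t} u \leq \max_{\Sigma_0} u \leq C(S,\Sigma_0)$; unwinding the definition of $\phi$ and using $|d|\leq 1$ yields $|F|^2 \leq C(S,\Sigma_0)\,e^{\alpha t + 2}$, which is the claim after renaming constants, and the set-theoretic consequence for $T<\infty$ is then immediate. The only real bookkeeping issue is the order in which one chooses $\alpha$ and then $K$ so that the interior and boundary inequalities hold simultaneously; all the genuine geometric content (the existence of $d$ and the control of $(\partial_t - \Delta)\phi$) has already been set up in the proof of Theorem \ref{theorem:A-H-bound}, so no new ingredients are required.
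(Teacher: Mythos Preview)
Your proof is correct and takes essentially the same approach as the paper: multiply $|F|^2$ by the barrier $\phi$ from Theorem \ref{theorem:A-H-bound} and apply the parabolic maximum principle. The only differences are cosmetic---you add a constant $K$ to fix the boundary sign for all $|F|$ (the paper instead restricts to $|F|\geq 1$), and you choose $\alpha$ large enough that the interior inequality is $\leq 0$ outright (the paper instead carries a residual $+cf$ term, picking up a second exponential factor in the final bound).
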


\begin{proof}
We consider the quantity
\[
f = \phi |F|^2 = \phi \sum_\beta F_\beta^2 ,
\]
where $\phi$ is the cutoff function from Theorem \ref{theorem:A-H-bound}.

At the boundary
\[
N |F|^2 = 2\sum_\beta F_\beta N_\beta \leq 2\max\{ |F|^2, 1 \}.
\]
And in the interior
\[
(\partial_t - \Delta) |F|^2 = 2\sum_\beta F_\beta (\partial_t - \Delta) F_\beta - 2\sum_\beta |\nabla F_\beta|^2 \leq 0 .
\]

Whenever $|F| \geq 1$, we obtain (provided $b \geq 1$)
\[
N f \leq 2 |F|^2 \phi - 2b \phi |F|^2 \leq 0 ,
\]
and hence any spatial maximum on $\{ |F| \geq 1\}$ is interior.

We calculate
\begin{align*}
(\partial_t - \Delta) f &\leq -\frac{2}{\phi} <\nabla f, \nabla \phi> + 2 |F|^2 \phi \frac{|\nabla \phi|^2}{\phi^2} \\
&\leq -\frac{2}{\phi} <\nabla f, \nabla \phi> + c f
\end{align*}
for $c = c(S, n)$.  We deduce that
\[
|F|^2 \leq \phi^{-1} e^{ct} \max\{1, \max_{\Sigma_0} f \}. \qedhere
\]
\end{proof}


\section{Convexity pinching}
\label{section:convexity-pinching}

We prove Theorem \ref{theorem:convexity-pinching}.  Recall that we wish to show that if $T < \infty$, then for any $k \in \{1, \ldots, n\}$ and any $\eta > 0$,
\begin{equation}\label{eqn:convexity-estimate-reminder}
S_k \geq -\eta H^k - C
\end{equation}
with $C = C(S, \Sigma_0, T, \eta, n)$.  Here $S_k$ is the $k$-th symmetric polynomial of the principle curvatures.   We following \cite{huisken-sinestrari:convex-pinching} and prove \eqref{eqn:convexity-estimate-reminder} by induction on $k$.  Notice this is trivially true for $k = 1$.

From henceforth assume \eqref{eqn:convexity-estimate-reminder} holds up to a fixed $k$, i.e. $S_l \geq -\eta H^l - C$ for every $l = 1, \ldots, k$.  We will now prove \eqref{eqn:convexity-estimate-reminder} for $k+1$.  Of course we also from now on assume $T < \infty$.

In spirit we would like to consider the function
\[
\frac{-S_{k+1}/S_k - \eta H}{H} H^\sigma
\]
and show this is bounded above in spacetime.  However for general $k$ we have no positivity control over the denominator $S_k$.  We require a further perturbation of the second fundamental form.

\begin{definition}
Let $\tilde A = (b_{ij})$ be the \emph{twice-perturbed second fundamental form}
\begin{align*}
b_{ij} &= \bar h_{ij} + (\epsilon H + D - D_0)g_{ij} \\
&= h_{ij} + T_{ij\nu} + (\epsilon H + D)g_{ij}.
\end{align*}
Here $D \geq D_0 + 1$ and $\epsilon \in (0, \frac{1}{2n}]$ are constants to be fixed later.
\end{definition}

We write $\tlambda_i$ for the eigenvalues of $b_{ij}$, so that if $\bar\lambda_i$ are the eigenvalues of the first-perturbed $\bar h_{ij}$, then 
\[
\tlambda_i = \bar \lambda_i + (\epsilon H + D - D_0) .
\]
Correspondingly $|\tA|$ is the norm of the twice-perturbed second fundamental form, $\tH$ the mean curvature, and $\tS_k = s_k(\tlambda)$, $\tQ_k = q_k(\tlambda)$ where defined.

Recall we had fixed $D_0 = D_0(S)$ so that $T(X, X, \nu) + D_0 \geq 1$ for any unit vector $X$.  So we still have the conditions
\begin{equation} \label{eqn:tilde-bounded-below}
\tH \geq H + 1 \geq 1, \quad |\tA| \geq 1
\end{equation}
and since $|A| \leq c(S, \Sigma_0, T)H$, we have
\begin{equation} \label{eqn:tilde-bounded-above}
|\tA| \leq c(S, \Sigma_0, T)\tH .
\end{equation}

\begin{remark} \label{remark:orders}
Since $|\tA| \geq 1$ and $\epsilon \leq \frac{1}{2n}$, we have
\[
1 = O(|\tA|), \quad |A| = O(|\tA|), \quad |\nabla A| = O(|\nabla \tA| + |\tA|) .
\]
\end{remark}


\begin{lemma}\label{lemma:scale-breaking-invariance}
If $\bar h_{ij} = h_{ij} + O(1)$, and
\[
S_l \geq -\theta H^l - C
\]
for any $\theta > 0$, then we also have
\[
\bar S_l \geq -\theta \bar H^l - \bar C
\]
for any $\theta > 0$.  Here both $C, \bar C$ depend on $S, \Sigma_0, T, \theta, n$.
\end{lemma}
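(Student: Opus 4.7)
The plan is to compare $\bar S_l$ with $S_l$, and $\bar H^l$ with $H^l$, as polynomials, then absorb the lower-order error using Young's inequality and the a priori bound $|A| \leq C(S, \Sigma_0, T)\, H$ from Theorem \ref{theorem:A-H-bound-intro}.

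First, recall that $s_l$ is a polynomial of degree $l$ in the entries of a symmetric matrix. Writing $\bar h = h + B$ with $|B| = O(1)$, the difference $\bar S_l - S_l = s_l(h+B) - s_l(h)$ is therefore a polynomial in the entries of $h$ of degree at most $l-1$, with coefficients bounded by a constant depending only on $S$. Since each entry of $h$ is controlled by $|A| \leq cH$, this yields $|\bar S_l - S_l| \leq c(1 + H^{l-1})$. Analogously, since $\bar H = H + \tr(B) = H + O(1)$, a binomial expansion gives $|H^l - \bar H^l| \leq c(1 + \bar H^{l-1})$ and $H^{l-1} \leq c(1 + \bar H^{l-1})$.

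Next, given an arbitrary target $\theta > 0$, I would invoke the hypothesis at the strengthened value $\theta/2$ to obtain $S_l \geq -(\theta/2) H^l - C_{\theta/2}$. Substituting the two comparisons above and consolidating constants produces a bound of the shape
\[
\bar S_l \geq -\frac{\theta}{2} \bar H^l - c' \bar H^{l-1} - C',
\]
with $c', C'$ depending on $(S, \Sigma_0, T, \theta, n)$. A final application of Young's inequality, $c' \bar H^{l-1} \leq (\theta/2) \bar H^l + C''(\theta)$, absorbs the middle term and delivers $\bar S_l \geq -\theta \bar H^l - \bar C$ as desired.

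There is no serious obstacle; the argument is essentially bookkeeping with polynomial expansions. The only subtle move is applying the inductive hypothesis at $\theta/2$ rather than at the target $\theta$ itself, so that Young's inequality has the necessary slack to recover the leading coefficient after converting back from $H$ to $\bar H$.
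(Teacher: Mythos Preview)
Your proposal is correct and matches the paper's approach essentially line for line: both compare $\bar S_l$ with $S_l$ via the polynomial expansion bound $|\bar S_l - S_l| \leq c(1+H^{l-1})$ (using $|A| \leq cH$), convert between $H$ and $\bar H$, and absorb the resulting $\bar H^{l-1}$ term with Young's inequality. The only cosmetic difference is that the paper applies the hypothesis at $\theta$ and arrives at $-4\theta\bar H^l - \bar C$ (then uses arbitrariness of $\theta$), whereas you pre-shrink to $\theta/2$ to land exactly on $-\theta\bar H^l$.
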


\begin{proof}
Given $\theta > 0$ and the corresponding $C$, we have for $c = c(S, \Sigma_0, T, n, l)$, 
\begin{align*}
\bar S_l &\geq S_l - cH^{l-1} \\
&\geq -\theta H^l - C - cH^{l-1} \\
&\geq -2\theta \bH^l - C - c\bar H^{l-1} - c \\
&\geq -4\theta \bH^l - C - c - c\left(\frac{c}{\theta}\right)^{l-1} . \qedhere
\end{align*}
\end{proof}

\begin{lemma}\label{lemma:scale-breaking-to-bound}
Suppose for any $l = 1, \ldots, k$ and any $\theta > 0$, we have
\[
S_l \geq -\theta H^l - C.
\]
Then for any $\epsilon \in (0, \frac{1}{2n}]$, there is a $D_\epsilon \geq D_0 + 1$ such that
\begin{equation}\label{eqn:scale-breaking-to-bound}
\tS_k \geq \frac{\epsilon}{1+n\epsilon} \frac{n-k+1}{k} \tS_{k-1} \tH
\end{equation}
whenever $D \geq D_\epsilon$.
\end{lemma}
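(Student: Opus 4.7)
The plan is to expand both sides in $E := \epsilon H + D - D_0$. First I would apply Lemma \ref{lemma:scale-breaking-invariance} to restate the inductive hypothesis as $\bar S_l \geq -\theta \bar H^l - \bar C_\theta$ for every $l \leq k$ and any $\theta > 0$. Since $\tilde\lambda_i = \bar\lambda_i + E$, the shift identity gives
\[
\tilde S_l = \sum_{j=0}^l \binom{n-j}{l-j} E^{l-j} \bar S_j, \qquad \tilde H = nE + \bar H,
\]
which I would apply for $l = k-1, k$. Two positive features will be crucial: $\bar S_0 = 1$ and $\bar S_1 = \bar H \geq 1$ are exactly positive (so no inductive perturbation is needed for $j = 0, 1$), and $E - \epsilon \bar H \geq (D - D_0) - O(\epsilon)$ is made arbitrarily large positive by choosing $D$ large, since $\bar H = H + O(1)$.

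The core algebraic step is to regroup $L := \tilde S_k - \alpha \tilde S_{k-1}\tilde H$ by $\bar S_j$ (for $j = 0, \ldots, k-1$), leaving $\bar S_k$ on its own. Using $\binom{n-j}{k-1-j}/\binom{n-j}{k-j} = (k-j)/(n-k+1)$ combined with the specific value $\alpha = \epsilon(n-k+1)/(k(1+n\epsilon))$, the coefficient of $\bar S_j$ collapses, after substituting $\tilde H = nE + \bar H$, to
\[
a_j = \frac{\binom{n-j}{k-j}}{k(1+n\epsilon)} E^{k-1-j}\bigl[(k + nj\epsilon) E - \epsilon(k-j)\bar H\bigr],
\]
giving $L = \sum_{j=0}^{k-1} a_j \bar S_j + \bar S_k$. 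For $D$ past a threshold depending on $\epsilon$ and $S$, we have $E \geq \epsilon \bar H + 1$; substituting this into the bracket of $a_j$ yields $\epsilon \bar H \cdot j(1+n\epsilon) + (k + nj\epsilon)$, so each $a_j > 0$.

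Having $a_j > 0$, I would exploit the exact positive contributions $a_0 \cdot 1$ and $a_1 \cdot \bar H$ and apply the inductive bound $\bar S_j \geq -\theta \bar H^j - \bar C_\theta$ only for $j \geq 2$ and for $j = k$. Tracking leading powers of $\bar H$ (where $E \approx \epsilon \bar H$), the term $a_1 \bar H$ contributes approximately $\tfrac{\binom{n}{k} \epsilon^{k-1}}{n}\bar H^k$, while each $-\theta a_j \bar H^j$ contributes at most $\theta \cdot \tfrac{j\binom{n-j}{k-j}}{k}\epsilon^{k-j}\bar H^k$ and the $\bar S_k$ piece gives $-\theta \bar H^k$. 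Choosing $\theta = \theta(\epsilon, n, k)$ sufficiently small --- which is exactly the flexibility afforded by ``for any $\theta > 0$'' in the inductive hypothesis --- renders the $\bar H^k$-coefficient of $L$ strictly positive. The remaining lower-order-in-$\bar H$ pieces together with the $\bar C_\theta$-constants are then dominated by the $(D - D_0)$-contribution to $a_0$, which is of order $(D - D_0) \epsilon^{k-1} \bar H^{k-1}$ and grows linearly in $D$; taking $D = D_\epsilon$ large completes the proof.

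The main obstacle is the algebraic simplification yielding the clean form of $a_j$ and the verification that every coefficient is strictly positive. This hinges on the specific value of $\alpha$, set just below the Maclaurin ratio $(n-k+1)/(nk)$ attained in the equal-eigenvalues regime; it is precisely this choice that makes the dominant $E^k$-contributions of $\tilde S_k$ and $\alpha \tilde S_{k-1}\tilde H$ leave a positive surplus proportional to $E - \epsilon \bar H$, which in turn supplies both the positive $a_0$ (pumped up by $D$) and the positive $\bar H^k$-leading term from $a_1 \bar H$ that anchors the whole estimate.
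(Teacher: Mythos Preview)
Your computation is correct: the shift identity and the resulting coefficients $a_j$ are exactly as you state, and the positivity of each $a_j$ once $E \geq \epsilon\bar H + 1$ follows just as you indicate. The paper's own proof, however, is much shorter: after invoking Lemma~\ref{lemma:scale-breaking-invariance} to transfer the inductive hypothesis to the $\bar S_l$ (exactly as you do in your first step), it simply cites Lemma~2.7 of \cite{huisken-sinestrari:convex-pinching} as a black box, observing that $b_{ij} = \bar h_{ij} + (\epsilon H + D - D_0)g_{ij}$ is precisely the kind of shift that lemma is designed to handle. What you have written is, in effect, a self-contained proof of that cited lemma specialized to the present setting. The payoff of your route is that it makes the mechanism transparent --- in particular why the specific constant $\alpha = \epsilon(n-k+1)/(k(1+n\epsilon))$ is the correct one, sitting just below the Maclaurin ratio --- whereas the paper trades this insight for brevity by outsourcing to the literature.

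One caution on your final paragraph: the ``lower-order-in-$\bar H$'' terms you absorb into the $a_0$-contribution themselves carry powers of $E$ (hence of $D$), so you should check that the positive piece $a_0 \sim \frac{\binom{n}{k}}{1+n\epsilon} E^{k-1}(E - \epsilon\bar H)$ genuinely dominates them \emph{uniformly} in $\bar H \geq 1$ as $D \to \infty$, not merely for each fixed $\bar H$. This is routine --- for instance by treating the lower bound as a degree-$k$ polynomial in the pair $(\bar H, E)$ and checking positivity of its top-degree homogeneous part on the relevant cone --- but the sketch as written elides it.
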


\begin{proof}
Lemma \ref{lemma:scale-breaking-invariance} implies the hypothesis holds for $\bar S_l$ ($l = 1, \ldots, k$).  Since $b_{ij} = \bh_{ij} + (\epsilon H + D - D_0) g_{ij}$, by Lemma 2.7 of \cite{huisken-sinestrari:convex-pinching} there exists a $D_1 = D_1(\epsilon, S, \Sigma_0, T)$ such that \eqref{eqn:scale-breaking-to-bound} holds whenever $D - D_0 \geq D_1$.  Now set $D_\epsilon = D_1 + D_0 + 1$.
\end{proof}

Although we will fix $\epsilon \in (0, \frac{1}{2n}]$ later, for the duration of the paper we take $D = D_\epsilon$ as in Lemma \ref{lemma:scale-breaking-to-bound}.

\begin{remark}\label{remark:tilde-S-bounded-below}
Our inductive hypothesis and our choice of $D$ implies that, for $l = 1, \ldots, k$
\begin{equation}
\tS_l \geq c(n)\epsilon \tH \tS_{l-1} \geq c(n) \epsilon^{l-1} \tH^l .
\end{equation}
\end{remark}

\begin{remark}[Derivatives of $\tS_l$]\label{remark:derivative-bounds}
$\tS_l$ is a homogeneous degree $l$ polynomial in the entries $b^i_j$.  If $\partial$ denotes differentiation in the entries of $b^i_j$, we have for any $d + s \leq l$ and any $l = 1, \ldots, n$
\[
\left| \partial^d \tS_l \right| \leq c(S, T, \Sigma_0, n) \tH^{l-d}
\]
and
\[
\left| \nabla^s \partial^d \tS_l \right| \leq c(S, T, \Sigma_0, n) \tH^{l-d-s} |\nabla \tA|^s .
\]

Using Remark \ref{remark:tilde-S-bounded-below}, we also get that, for $l = 1, \ldots, k+1$
\[
\left| \partial^d \tQ_l \right| \leq c(S, \Sigma_0, T, n, \epsilon) \tH^{1-d} .
\]
\end{remark}

\begin{definition}
For $\eta, \sigma \in (0, 1]$, let
\[
f = \frac{-\tQ_{k+1} -\eta \tH}{\tH^{1-\sigma}} .
\]
\end{definition}

We see that $f$ is well-defined by Remark \ref{remark:tilde-S-bounded-below} and $f \geq 0$ if and only if
\[
\tQ_{k+1} \leq -\eta \tH .
\]
By Remark \ref{remark:derivative-bounds} we have that
\begin{equation}\label{eqn:f-bounds}
| \partial^d f| \leq c(S, \Sigma_0, T, \epsilon, n) \tH^{\sigma-d} .
\end{equation}

\begin{lemma}\label{lemma:f-bounded-implies}
Suppose for every $\epsilon \in (0, \frac{1}{2n}]$ and $\eta \in (0, 1]$, there exists $\sigma \in (0, 1]$ and $C = C(S, \Sigma_0, T, n, \epsilon, \sigma)$ such that
\[
f_+ < C .
\]
Then for any $\theta > 0$ there is a $\bar C = \bar C(S, \Sigma_0, T,n, \theta)$ such that
\[
S_{k+1} \geq -\theta H^{k+1} - \bar C .
\]
\end{lemma}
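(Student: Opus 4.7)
My plan is to turn the hypothesis $f_+ < C$ into the desired lower bound on $S_{k+1}$ by removing the two perturbation layers in succession (twice-perturbed $\to$ once-perturbed $\to$ unperturbed), using $\epsilon$ and $\eta$ as small parameters that will eventually be tied to the target $\theta$.

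First, for a fixed (but eventually small) $\epsilon \in (0, \tfrac{1}{2n}]$ and $\eta \in (0, 1]$, the hypothesis supplies $\sigma$ and $C$ with $f \leq C$ throughout spacetime.  Rearranging the definition of $f$ gives $\tQ_{k+1} \geq -\eta \tH - C\,\tH^{1-\sigma}$, and multiplying by the strictly positive quantity $\tS_k$ (Remark \ref{remark:tilde-S-bounded-below}) together with the upper bound $0 < \tS_k \leq c\,\tH^k$ (which follows from \eqref{eqn:tilde-bounded-above}) yields
\[
\tS_{k+1} \geq -c\eta\,\tH^{k+1} - cC\,\tH^{k+1-\sigma}.
\]
A standard Young inequality of the form $x^{k+1-\sigma} \leq \eta\, x^{k+1} + C'(\eta,\sigma)$ absorbs the last term into a further $c\eta\,\tH^{k+1}$ at the cost of an additive constant, giving $\tS_{k+1} \geq -c\eta\,\tH^{k+1} - C''$ with $c = c(n, S, \Sigma_0, T)$ and $C''$ depending additionally on $\eta, \sigma$.

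Next I would pass from $\tS_{k+1}$ to $\bar S_{k+1}$ via the scalar shift identity
\[
s_{k+1}(\bar\lambda + r\,\mathbf{1}) = \sum_{j=0}^{k+1}\binom{n-j}{k+1-j}r^{k+1-j}\bar S_j, \qquad r := \epsilon H + D - D_0,
\]
applied to the eigenvalues $\tlambda_i = \bar\lambda_i + r$.  Each summand for $j \leq k$ can be bounded in absolute value by $c\,\epsilon^{k+1-j}H^{k+1}$ plus lower-order-in-$H$ terms, using Theorem \ref{theorem:A-H-bound} to get $|\bar S_j| \leq c\,\bH^j$; the leading $\epsilon$-dependence comes from $j = k$ and is simply $c\epsilon\, H^{k+1}$.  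Combined with $\tH \leq (1+n\epsilon)H + c \leq 2H + c$, this produces
\[
\bar S_{k+1} \geq -(c\eta + c\epsilon)\,H^{k+1} - C''',
\]
where $c$ still depends only on $(n, S, \Sigma_0, T)$.

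Finally, since $\bh_{ij} = h_{ij} + O(1)$, the polynomial-expansion argument in the proof of Lemma \ref{lemma:scale-breaking-invariance} applies equally in reverse and gives $|S_{k+1} - \bar S_{k+1}| \leq c(H^k + 1)$, and one last Young absorbs $cH^k$ into a small multiple of $H^{k+1}$.  Given $\theta > 0$, I first choose $\epsilon$ so that $c\epsilon < \theta/4$, then $\eta$ so that $c\eta < \theta/4$; the hypothesis then produces the required $\sigma, C$, and chaining the inequalities above yields $S_{k+1} \geq -\theta\, H^{k+1} - \bar C$ with $\bar C = \bar C(S, \Sigma_0, T, n, \theta)$.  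The main bookkeeping obstacle is tracking how the accumulated constants depend on $\epsilon, \eta$ (which get driven to zero), but since both are fixed \emph{before} the hypothesis is invoked, this only inflates the final constant $\bar C$ and never the coefficient in front of $H^{k+1}$.
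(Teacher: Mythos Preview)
Your argument is correct and follows essentially the same route as the paper: unwind the two perturbation layers in turn, absorbing all sub-leading powers of $H$ via Young so that only $c\eta$ and $c\epsilon$ survive in front of $H^{k+1}$, then choose $\epsilon,\eta$ in terms of $\theta$.  The paper is simply terser---it outsources your Steps 1--2 (passing from the bound on $f$ through $\tS_{k+1}$ down to $\bar S_{k+1}$) to Lemma~2.8 of \cite{huisken-sinestrari:convex-pinching}, and your Step 3 to Lemma~\ref{lemma:scale-breaking-invariance}---whereas you write out the shift identity and the polynomial-expansion bounds by hand.
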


\begin{proof}
Recall we have fixed $D = D_\epsilon$.  The proof of Lemma 2.8 in \cite{huisken-sinestrari:convex-pinching} shows the hypotheses imply that
\[
\bar S_{k+1} \geq -\theta \bar H^{k+1} - \bar C
\]
for any $\theta > 0$, and $\bar C = \bar C(S, \Sigma_0, T, \theta, n)$.  Now use Lemma \ref{lemma:scale-breaking-invariance}.
\end{proof}

We work towards bounding $f_+$, for a given $\eta > 0$.  We first calculate the order of boundary derivatives.  Choose orthonormal coordinates at a fixed $p \in \partial\Sigma$ such that $\partial_1 \equiv N$.


\begin{theorem}\label{theorem:b_ij-boundary-derivatives}
At $p$ we have, for $i, j > 1$,
\begin{align*}
\nabla_1 b_{ij} = O(|\tA|) \\
\nabla_1 b_{11} = O(|\tA|)
\end{align*}
\end{theorem}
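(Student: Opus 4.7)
The plan is to reduce directly to Theorem \ref{theorem:boundary-derivatives} using the definition of the twice-perturbed second fundamental form. Recall that
\[
b_{ij} = \bh_{ij} + (\epsilon H + D - D_0)\,g_{ij},
\]
so that $b_{ij} - \bh_{ij}$ is a scalar (namely $\epsilon H + D - D_0$) times the metric. Since covariant differentiation annihilates $g$ and $D - D_0$ is a constant, the difference of covariant derivatives collapses to a single term:
\[
\nabla_1 b_{ij} = \nabla_1 \bh_{ij} + \epsilon\,(\nabla_1 H)\,g_{ij}.
\]

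Now I would invoke two prior results. First, by Theorem \ref{theorem:boundary-derivatives}, both $\nabla_1 \bh_{ij}$ (for $i,j > 1$) and $\nabla_1 \bh_{11}$ are $O(|\bA|)$. Second, the scalar factor $\nabla_1 H = N(H) = k_{\nu\nu} H$ by Proposition \ref{prop:H-boundary-derivative}, so $|\nabla_1 H| = O(H)$. Finally, Remark \ref{remark:orders} (together with $|\bA| \leq |A| + O(1)$ and $H \leq n|A|$) gives $|\bA| = O(|\tA|)$ and $H = O(|\tA|)$. Substituting these estimates into the identity above yields $\nabla_1 b_{ij} = O(|\tA|)$ for $i,j > 1$ and $\nabla_1 b_{11} = O(|\tA|)$, as claimed.

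There is no real obstacle: the hard work — computing the tangential boundary derivatives of $h$ and absorbing the $T_{ij\nu}$-perturbation into the first perturbed tensor $\bA$ — has already been done in Lemma \ref{lemma:boundary-derivatives} and Theorem \ref{theorem:boundary-derivatives}. The further perturbation by $(\epsilon H + D)g_{ij}$ only contributes a boundary derivative of size $O(H)$, which is already dominated by $|\tA|$ thanks to the a priori bound $|A| \leq c(S, \Sigma_0, T) H$ implicit in \eqref{eqn:tilde-bounded-above}. The main conceptual point, worth highlighting in the write-up, is simply that the scale-breaking perturbation $\epsilon H\, g_{ij}$ is gradient-compatible at the boundary.
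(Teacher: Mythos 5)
Your proposal is correct and follows essentially the same route as the paper: expand $\nabla_1 b_{ij}$ using the definition of the twice-perturbed tensor, note that the only new term is $\epsilon(\nabla_1 H)g_{ij}$, bound it via Proposition \ref{prop:H-boundary-derivative}, and combine with Theorem \ref{theorem:boundary-derivatives} and the elementary observation $|\bA|, H = O(|\tA|)$. One small remark: your closing appeal to the a priori bound $|A| \leq c(S,\Sigma_0,T)H$ from \eqref{eqn:tilde-bounded-above} is not needed — the trivial estimate $H \leq n|A| = O(|\tA|)$, which you already state parenthetically, suffices.
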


\begin{proof}
By Theorem \ref{theorem:boundary-derivatives} and Proposition \ref{prop:H-boundary-derivative}, we calcuate
\begin{align*}
\nabla_1 b_{11} 
&= \nabla_1 (\bar h_{11} + (\epsilon H + D) g_{11}) \\
&= O(|A| + 1) + \epsilon \partial_1 H g_{11} \\
&= O(|\bA|)
\end{align*}
and the proof for $i, j > 1$ is identical.
\end{proof}

\begin{corollary}\label{cor:tilde-S-boundary}
For every $l = 1, \ldots, n$, 
\begin{equation}\label{eqn:tilde-S-boundary}
N \tS_l = O(|\tA|^l) .
\end{equation}
\end{corollary}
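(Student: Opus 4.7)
The plan is to compute $N\tS_l$ via the chain rule and exploit a block-structure of $b^i_j$ at the boundary that is specifically engineered by the perturbation $T_{ij\nu}$.

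First I would set up orthonormal coordinates at a fixed $p \in \partial\Sigma$ exactly as in Theorem \ref{theorem:b_ij-boundary-derivatives}: take $\partial_1 \equiv N$ and $(\partial_i)_{i>1}$ orthonormal and tangent to $\partial\Sigma$, so $g^{ij} = \delta^{ij}$ at $p$ and $b^i_j = b_{ij}$. The key structural observation is that the perturbation was designed so that $\bar h_{N,X} = 0$ for every $X \in T_p\partial\Sigma$ (indeed, $\bar h_{N,X} = h_{N,X} + T_{NX\nu} = -k_{\nu X} + k_{\nu X} = 0$). Since also $(\epsilon H + D) g_{1j} = 0$ for $j > 1$, we conclude $b_{1j} = 0$ for $j > 1$, so at $p$ the matrix $(b^i_j)$ is block diagonal with a $(1,1)$ block and an $(n-1)\times(n-1)$ block.

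Next I would apply the chain rule
\[
N \tS_l \;=\; \sum_{i,j} \frac{\partial s_l}{\partial b^i_j}(b)\,\nabla_1 b^i_j
\]
and argue that at any block-diagonal symmetric matrix $M$ the partial derivative $\partial s_l/\partial M_{1j}$ vanishes for $j > 1$. This follows from writing $s_l(M) = \sum_{|I|=l} \det(M_I)$ and noting that only those principal minors with both $1$ and $j$ in $I$ depend on $M_{1j}$; expanding such a minor along row $1$ (with $M_{1j}=M_{j1}=0$) shows that $M_{1j}$ enters only through the symmetric pair $(M_{1j}, M_{j1})$, producing a purely quadratic contribution, so the first-order derivative at $M_{1j}=0$ vanishes. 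Thus only "same-block" terms survive:
\[
N\tS_l \;=\; \frac{\partial s_l}{\partial b^1_1}(b)\,\nabla_1 b_{11} + \sum_{i,j>1}\frac{\partial s_l}{\partial b^i_j}(b)\,\nabla_1 b_{ij}.
\]

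Finally, each of these terms is controlled by the two ingredients already at hand: Remark \ref{remark:derivative-bounds} gives $|\partial s_l/\partial b^i_j| \leq c\tH^{l-1}$, and Theorem \ref{theorem:b_ij-boundary-derivatives} gives $|\nabla_1 b_{ij}| = O(|\tA|)$ precisely for the same-block indices. Combining with $\tH = O(|\tA|)$ and $|\tA| \geq 1$ from \eqref{eqn:tilde-bounded-below}--\eqref{eqn:tilde-bounded-above} yields $N\tS_l = O(|\tA|^{l-1}\cdot |\tA|) = O(|\tA|^l)$, as claimed.

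The only substantive step is the vanishing of $\partial s_l/\partial M_{1j}$ at block-diagonal matrices; everything else is a direct assembly of the bounds recorded in the previous remark and theorem. This vanishing is precisely why the perturbation $T_{ij\nu}$ was introduced in the first place, and it is what makes the off-block derivatives $\nabla_1 b_{1j}$ (which we have no bound on) irrelevant.
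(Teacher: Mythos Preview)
Your proof is correct and matches the paper's approach. The paper's proof is terser---it says to write $\tS_l$ as the sum of $l\times l$ principal minors of $b^i_j$ and use $b_{N,X}=0$---but unpacking that instruction leads to exactly your computation: the off-block cofactors $\partial s_l/\partial b_{1j}$ vanish because they contain a zero column, so only the same-block derivatives $\nabla_1 b_{11}$ and $\nabla_1 b_{ij}$ ($i,j>1$) from Theorem~\ref{theorem:b_ij-boundary-derivatives} contribute. One minor simplification: you do not need Remark~\ref{remark:derivative-bounds} (which routes through $\tH$); since $s_l$ is homogeneous of degree $l$, the bound $|\partial s_l/\partial b^i_j|\leq c(n)|\tA|^{l-1}$ is immediate.
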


\begin{proof}
For $l = 1$ this follows from the boundary condition $N H = O(H)$ and Proposition \ref{prop:tensor-evolution}.  For $l > 1$, write $S_l$ as the sum of $l$-by-$l$ minors of $b_i^j$, and use that $b_{N, X} = 0$ for $X \in T_p \partial\Sigma$.
\end{proof}

\begin{theorem} \label{theorem:f-boundary-derivative}
We have
\begin{equation}
|N f| \leq c(S, \Sigma_0, T, n, \epsilon) \tH^\sigma .
\end{equation}
\end{theorem}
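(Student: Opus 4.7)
The plan is to expand $Nf$ by the product/quotient rule and show each resulting piece is $O(\tH^\sigma)$. Writing
\[
Nf \;=\; -\tH^{\sigma-1}\,N\tQ_{k+1} \;-\; \eta\,\tH^{\sigma-1}\,N\tH \;+\; (\sigma-1)\bigl(-\tQ_{k+1}-\eta\tH\bigr)\tH^{\sigma-2}\,N\tH,
\]
the goal is to bound the three terms separately. The middle term is the easiest: since $\tH = H + O(1)$, Proposition \ref{prop:H-boundary-derivative} together with Proposition \ref{prop:tensor-evolution} (applied to the perturbation $T_{ij\nu}$) yields $|N\tH| = O(\tH)$, so $|\eta\tH^{\sigma-1}N\tH| = O(\tH^\sigma)$.

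The main work is estimating $N\tQ_{k+1}$. Applying the quotient rule to $\tQ_{k+1} = \tS_{k+1}/\tS_k$ gives
\[
N\tQ_{k+1} \;=\; \frac{N\tS_{k+1}}{\tS_k} \;-\; \frac{\tS_{k+1}\,N\tS_k}{\tS_k^2}.
\]
Corollary \ref{cor:tilde-S-boundary} provides $|N\tS_l| = O(|\tA|^l)$, which combined with the a priori bound $|\tA| \leq c(S,\Sigma_0,T)\tH$ from \eqref{eqn:tilde-bounded-above} gives $|N\tS_l| \leq c\tH^l$. Remark \ref{remark:derivative-bounds} gives the numerator bound $|\tS_l| \leq c\tH^l$. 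The decisive ingredient is the lower bound $\tS_k \geq c(n)\epsilon^{k-1}\tH^k$ from Remark \ref{remark:tilde-S-bounded-below}, which is precisely what the inductive hypothesis and the choice $D = D_\epsilon$ were set up to provide. Substituting these three bounds yields $|N\tQ_{k+1}| \leq c(S,\Sigma_0,T,n,\epsilon)\,\tH$, and multiplication by $\tH^{\sigma-1}$ produces $O(\tH^\sigma)$.

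For the last term, Remark \ref{remark:derivative-bounds} with $d=0$ gives $|\tQ_{k+1}| \leq c\tH$, so $|{-\tQ_{k+1}-\eta\tH}| \leq c\tH$, and combining with $|N\tH|=O(\tH)$ yields the bound $c\tH\cdot\tH^{\sigma-2}\cdot\tH = c\tH^\sigma$, as required.

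The main technical obstacle is the potentially singular denominator $\tS_k$ in $\tQ_{k+1}$; everything hinges on the quantitative lower bound on $\tS_k$ that the induction and the twice-perturbed construction supply. Once that is in hand, the argument is a routine differentiation combined with the previously established boundary estimates for the entries of $\tA$.
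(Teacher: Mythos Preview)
Your proposal is correct and follows the same approach the paper takes; the paper's own proof is a one-liner (``Immediate from Corollary \ref{cor:tilde-S-boundary} and Remark \ref{remark:tilde-S-bounded-below}''), and you have correctly unpacked exactly those two ingredients via the product/quotient rule. One small remark: your claim ``$\tH = H + O(1)$'' is slightly off since $\tH = (1+n\epsilon)H + O(1)$, but this does not affect the argument, and in fact $|N\tH| = O(\tH)$ is already the $l=1$ case of Corollary \ref{cor:tilde-S-boundary}.
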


\begin{proof}
Immediate from Corollary \ref{cor:tilde-S-boundary} and Remark \ref{remark:tilde-S-bounded-below}.
\end{proof}


We obtain an (EVOLUTION-LIKE) equation for $f$.

\begin{proposition}
\begin{equation}
\partial_t b^i_j = \Delta b^i_j + |\tA|^2 b^i_j + O(D|\tA|^2)
\end{equation}
\end{proposition}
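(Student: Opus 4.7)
The plan is to reduce everything to the already-established evolution equations from Theorem \ref{thm:A-H-evolution} and then rewrite the result in terms of the twice-perturbed quantities by absorbing the discrepancies into the $O(D|\tA|^2)$ error. Since $b^i_j = \bar h^i_j + (\epsilon H + D - D_0)\delta^i_j$, the Laplacian and time derivative commute with the constant-coefficient shift $(D-D_0)\delta^i_j$, so
\[
(\partial_t - \Delta) b^i_j = (\partial_t - \Delta)\bar h^i_j + \epsilon\,((\partial_t - \Delta) H)\,\delta^i_j.
\]
From Theorem \ref{thm:A-H-evolution} the first term equals $|\bA|^2 \bar h^i_j + O(|\bA|^2)$, while $(\partial_t - \Delta)H = |\bA|^2 H + O(|\bA|)H = |\bA|^2 H + O(|\bA|^2)$, using $H = O(|\bA|)$. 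Hence
\[
(\partial_t - \Delta) b^i_j = |\bA|^2 \bar h^i_j + \epsilon |\bA|^2 H \delta^i_j + O(|\bA|^2).
\]

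Next I would compare $|\bA|^2$ with $|\tA|^2$. Since $\tlambda_i = \bar\lambda_i + (\epsilon H + D - D_0)$, a direct expansion gives
\[
|\tA|^2 = |\bA|^2 + 2(\epsilon H + D - D_0)\bH + n(\epsilon H + D - D_0)^2 = |\bA|^2 + O(D|\tA|),
\]
using $\bH, H = O(|\tA|)$ from Corollary \ref{corollary:orders} and Remark \ref{remark:orders}. Therefore $(|\tA|^2 - |\bA|^2)\bar h^i_j = O(D|\tA|^2)$, so I can replace $|\bA|^2 \bar h^i_j$ by $|\tA|^2 \bar h^i_j$ at the cost of an $O(D|\tA|^2)$ error. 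Similarly,
\[
|\tA|^2(\epsilon H + D - D_0) - \epsilon |\bA|^2 H = \epsilon H(|\tA|^2 - |\bA|^2) + (D - D_0)|\tA|^2 = O(D|\tA|^2),
\]
again because $H = O(|\tA|)$. Assembling these pieces,
\[
(\partial_t - \Delta) b^i_j = |\tA|^2 \bar h^i_j + |\tA|^2 (\epsilon H + D - D_0) \delta^i_j + O(D|\tA|^2) = |\tA|^2 b^i_j + O(D|\tA|^2),
\]
as desired. The only substantive step is the bookkeeping of the $D$-dependence in the last display; the main point is that every discrepancy introduced by passing from $\bA$ to $\tA$ is at worst linear in $D$ times $|\tA|^2$, which is exactly the size of the allowed error. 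I do not foresee a genuine obstacle here since the computation is entirely algebraic once Theorem \ref{thm:A-H-evolution} is available; the only care required is to resist the temptation to absorb $D$ into $O(\cdot)$, since later applications will need to track its dependence.
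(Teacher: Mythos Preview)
Your route is essentially the paper's: reduce to already-known evolution equations (you invoke Theorem \ref{thm:A-H-evolution} for $\bar h^i_j$, while the paper goes back to Propositions \ref{prop:general-evolution-equations} and \ref{prop:tensor-evolution} for the unperturbed $h^i_j$), then try to trade the coefficient $|\bA|^2$ (resp.\ $|A|^2$) for $|\tA|^2$ at the cost of $O(D|\tA|^2)$.

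That trade, however, does not go through as you have written it. In your expansion
\[
|\tA|^2 - |\bA|^2 \;=\; 2(\epsilon H + D - D_0)\bH + n(\epsilon H + D - D_0)^2,
\]
the terms $2\epsilon H\,\bH$ and $n\epsilon^2 H^2$ are of size $|\tA|^2$, not $D|\tA|$: $\epsilon$ is a fixed number in $(0,\tfrac{1}{2n}]$ while $H$ can be arbitrarily large compared to $D$. Consequently $(|\tA|^2-|\bA|^2)\bar h^i_j$ is genuinely $O(|\tA|^3)$, and your second display inherits the same defect through $\epsilon H(|\tA|^2-|\bA|^2)$. A sanity check: with $T\equiv 0$, $D$ fixed, and $h^i_j=\lambda\,\delta^i_j$ for $\lambda\to\infty$, one computes $(\partial_t-\Delta)b^i_j-|\tA|^2 b^i_j$ of order $\lambda^3$, while $D|\tA|^2$ is only of order $\lambda^2$. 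The paper's own last step, passing from $|A|^2 b^i_j$ to $|\tA|^2 b^i_j$, has exactly the same issue. What both arguments \emph{do} establish cleanly is
\[
(\partial_t-\Delta)b^i_j \;=\; |A|^2\, b^i_j + O(D|\tA|^2)\qquad(\text{equivalently } |\bA|^2\, b^i_j + O(D|\tA|^2)),
\]
and this is all that is needed downstream: after Euler's identity $\tfrac{\partial f}{\partial b^i_j}b^i_j=\sigma f$, one only uses $|A|^2\le c|\tA|^2$ to reach the bound $\sigma|\tA|^2 f$.
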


\begin{proof}
By Propositions \ref{prop:general-evolution-equations} and \ref{prop:tensor-evolution}, 
\begin{align*}
(\partial_t - \Delta) b^i_j 
&= |A|^2 (h^i_j + \epsilon H g^i_j) + O(1+|A|^2) \\
&= |A|^2 b^i_j - (D + T_{ij\nu})|A|^2 + O(1 + |A|^2) \\
&= |\tA|^2 b^i_j + O(D|\tA|^2)
\end{align*}
recalling that $D \geq 1$.
\end{proof}

\begin{lemma}\label{lemma:gradient-term}
Let $B > \eta > 0$.  There are constants $c_0 = c_0(n, \eta, B)$ and $c = c(c_0, S, \Sigma_0, T, n, \epsilon)$, such that whenever $-B \tS_k \tS_1 \leq \tS_{k+1} \leq -\eta \tS_1 \tS_k$ we have
\begin{equation}
\frac{\partial^2 \tilde Q_{k+1}}{\partial b_{ij} \partial b_{pq}} \nabla_l b_{ij} \nabla_l b_{pq} \leq -\frac{1}{c_0} \frac{|\nabla \tA|^2}{|\tA|} + c \tH
\end{equation}
\end{lemma}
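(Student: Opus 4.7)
My approach is to adapt the concavity lemma for $q_{k+1}$ from \cite{huisken-sinestrari:convex-pinching} (the analog of their Lemma 2.3) to our twice-perturbed tensor $\tA$. Working pointwise, I choose an orthonormal frame diagonalizing $b^i_j$ with eigenvalues $\tlambda_1, \ldots, \tlambda_n$. Since $\tS_k \geq c\epsilon^{k-1}\tH^k > 0$ by Remark \ref{remark:tilde-S-bounded-below}, the hypothesis $-B\tS_k\tS_1 \leq \tS_{k+1} \leq -\eta \tS_1 \tS_k$ is equivalent to the cone condition $-B\tH \leq \tQ_{k+1}(\tlambda) \leq -\eta \tH$, and the inductive controls $\tS_l \geq c\epsilon^{l-1}\tH^l$ for $l \leq k$ place $\tlambda$ in the regime where HS's algebraic concavity estimate applies with constants depending only on $n, \eta, B$.

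In this frame the Hessian of $\tQ_{k+1}$, viewed as a quadratic form on symmetric matrices, admits the standard splitting
\[
\frac{\partial^2 \tQ_{k+1}}{\partial b_{ij}\, \partial b_{pq}} V_{ij} V_{pq} = \sum_{i,j} \frac{\partial^2 q_{k+1}}{\partial \lambda_i \partial \lambda_j}(\tlambda)\, V_{ii} V_{jj} + 2\sum_{i \neq j} \frac{(\partial q_{k+1}/\partial \lambda_i - \partial q_{k+1}/\partial \lambda_j)(\tlambda)}{\tlambda_i - \tlambda_j}\, V_{ij}^2 ,
\]
with the off-diagonal quotient interpreted as the appropriate limit when $\tlambda_i = \tlambda_j$. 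Setting $V_{ij} = \nabla_l b_{ij}$, summing over $l$, and invoking the HS algebraic concavity, the entire right-hand side is bounded above by $-c_0^{-1} |\nabla b|^2 / |\tlambda|$; using $|\tlambda| = |\tA|$ and $\nabla b = \nabla \tA$, this is the claimed leading term $-c_0^{-1}|\nabla \tA|^2/|\tA|$.

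The additive $c\tH$ error arises from lower-order corrections. The derivative $\nabla_l b_{ij}$ contains the pieces $\nabla_l T_{ij\nu}$ and $\epsilon(\nabla_l H) g_{ij}$, which are $O(|\tA|)$ by Proposition \ref{prop:tensor-evolution} and Remark \ref{remark:orders}. Together with any commutator terms picked up when the HS concavity bound degenerates near the boundary of the cone, these are of size $|\tA| \leq c\tH$ by \eqref{eqn:tilde-bounded-above}, and they absorb via Peter-Paul into $\frac{1}{2c_0}|\nabla \tA|^2/|\tA|$ at the cost of the asserted $c\tH$ term.

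The main obstacle is the HS concavity itself for the shifted eigenvalues $\tlambda_i = \blambda_i + (\epsilon H + D - D_0)$. The difference quotient in the off-diagonal piece is a priori dangerous when eigenvalues coincide; HS handle this by exploiting concavity of $q_{k+1}$ on the cone where $\tS_l > 0$ for $l \leq k$ and $\tQ_{k+1}$ is controlled from above and below. Because $\epsilon H + D - D_0$ is a uniform scalar shift, the differences $\tlambda_i - \tlambda_j = \blambda_i - \blambda_j$ are preserved, and the positivity hypotheses required by HS are exactly those supplied by our inductive hypothesis and Remark \ref{remark:tilde-S-bounded-below}; the estimate then transfers essentially verbatim, with all error bookkeeping collected into the single $c\tH$ remainder.
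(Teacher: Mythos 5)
There is a genuine gap in the central step. You set $V_{ij}=\nabla_l b_{ij}$ and invoke Huisken--Sinestrari algebraic concavity to conclude that the Hessian of $\tQ_{k+1}$ contracted against $V$ is bounded above by $-c_0^{-1}|V|^2/|\tA|$. No such bound can hold for an arbitrary symmetric tensor $V$: since $q_{k+1}$ is homogeneous of degree one, Euler's relation gives $\frac{\partial^2 q_{k+1}}{\partial\tlambda_i\partial\tlambda_j}\tlambda_i\tlambda_j=0$, so the Hessian annihilates the radial direction and there is no uniform strict negative upper bound on the cone. What HS's Lemma~2.13 actually controls is a specific functional $J(\tlambda,\cdot,\epsilon)$ evaluated on the gradient of the HS-type perturbed tensor $h_{ij}+(\epsilon H+D)g_{ij}$; it exploits the Codazzi-type symmetry of $\nabla_l h_{ij}$ in $(l,i,j)$ together with the trace-coupling $\epsilon\nabla_l H\,g_{ij}=\epsilon(\tr \nabla_l h)\,g_{ij}$, and it cannot be applied to the tensor $\nabla_l T_{ij\nu}$, which has no such symmetry. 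The paper is therefore forced to split $\nabla_l b_{ij}=\nabla_l(b_{ij}-T_{ij\nu})+\nabla_l T_{ij\nu}$ and expand the Hessian quadratic form bilinearly: HS Lemma~2.13 applies to the $\nabla(b-T)$--$\nabla(b-T)$ block, giving $-c_0^{-1}|\nabla(\tA-T)|^2/|\tA|$, from which $-\tfrac{1}{2}c_0^{-1}|\nabla\tA|^2/|\tA|$ is recovered via $|\nabla T|=O(\tH)$; the pure $\nabla T$--$\nabla T$ block is non-positive by ordinary (non-strict) concavity of $q_{k+1}$ (HS Theorem~2.5, Lemma~2.12); and the cross block is estimated by $c|\nabla\tA|+c\tH$ using $|\nabla T|=O(\tH)$ and the second-derivative bound $|\partial^2 q_{k+1}|\leq c\tH^{-1}$ of Remark~\ref{remark:derivative-bounds}. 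Your argument never performs this decomposition, so it has no mechanism to dispose of the $\nabla T$ contributions, and the leading negative term is not actually established.

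A secondary error compounds this: you assert that $\epsilon(\nabla_l H)g_{ij}=O(|\tA|)$. That is false --- $|\nabla H|$ is a gradient quantity and is not pointwise controlled by $|\tA|$. In the actual proof this piece is not a remainder at all: it is part of the structured input to HS's $J$-functional (indeed $\nabla_l(b-T)_{ij}=\nabla_l h_{ij}+\epsilon\nabla_l H\,g_{ij}$) and is absorbed into the negative gradient term, not into the $c\tH$ remainder. The only genuinely lower-order piece is $\nabla T=O(\tH)$, and even that must be handled by the explicit bilinear splitting rather than by a blanket Peter--Paul on ``lower-order corrections.''
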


\begin{proof}
Choose orthonormal coordinates which diagonalize $b_{ij}$.  We have, using the notation of Lemma 2.13 of \cite{huisken-sinestrari:convex-pinching},
\begin{align}
\frac{\partial^2 \tilde Q_{k+1}}{\partial b_{ij} \partial b_{pq}} \nabla_l b_{ij} \nabla_l b_{pq}
&= J(\tilde \lambda, \nabla_l (b_{ij} - T_{ij\nu}), \epsilon) \label{eqn:gradient-term-1} \\
&\quad + 2 \frac{\partial^2 q_{k+1}}{\partial \theta_{ij} \partial \theta_{pq}} (\tilde A) \nabla_l (b_{ij} - T_{ij\nu}) \nabla_l T_{ij\nu} \label{eqn:gradient-term-2}\\
&\quad + \frac{\partial^2 q_{k+1}}{\partial \theta_{ij} \partial \theta_{pq}}(\tilde A) \nabla_l T_{ij\nu} \nabla_l T_{pq\nu}\label{eqn:gradient-term-3}
\end{align}

By this same Lemma 2.13, 
\begin{align*}
J(\tilde\lambda, \nabla_l (b_{ij} - T_{ij\nu}), \epsilon) 
&\leq -\frac{1}{c_0} \frac{|\nabla (\tilde A - T)|^2}{|\tilde A|} \\
&\leq -\frac{1}{2c_0} \frac{|\nabla \tilde A|^2}{|\tilde A|} + \frac{1}{c_0} \frac{|\nabla T|^2}{|\tilde A|}
\end{align*}
for $c_0 = c_0(B, n, \eta)$.

By Theorem 2.5 and Lemma 2.12 of \cite{huisken-sinestrari:convex-pinching}, term \eqref{eqn:gradient-term-3} is non-positive.  We bound term \eqref{eqn:gradient-term-2}.  Recall that $|\nabla T| = O(H+1) = O(\tH)$.  Using Remark \ref{remark:derivative-bounds}
\begin{align*}
2 \frac{\partial^2 q_{k+1}}{\partial \theta_{ij} \partial \theta_{pq}} (\tilde A) \nabla_l (b_{ij} - T_{ij\nu}) \nabla_l T_{ij\nu}
&\leq 2\left| \frac{\partial^2 q_{k+1}}{\partial \theta_{ij} \partial \theta_{pq}} (\tA) \right| (|\nabla \tA| + |\nabla T|) |\nabla T| \\
&\leq c |\nabla \tA| + c \tH
\end{align*}
where $c = c(S, \Sigma_0, T, n, \epsilon)$.

We deduce
\begin{align*}
\frac{\partial^2 \tilde Q_{k+1}}{\partial b_{ij} \partial b_{pq}} \nabla_l b_{ij} \nabla_l b_{pq}
&\leq -\frac{1}{4c_0} \frac{|\nabla \tA|^2}{|\tA|} + c \tH
\end{align*}
for $c = c(S, \Sigma_0, T, n, \epsilon, c_0)$.
\end{proof}


Since $f$ is a homogeneous, degree $\sigma$, symmetric function of the eigenvalues $\tilde\lambda_i$ of $b^i_j$, we obtain that
\begin{align*}
(\partial_t - \Delta)f &= \frac{\partial f}{\partial b^i_j} (|\tA|^2 b^i_j + O(|\tA|^2D)) - \frac{\partial^2 f}{\partial b^i_j \partial b^p_q} \nabla_l b^i_j \nabla^l b^p_q \\
&\leq -\frac{\partial^2 f}{\partial b^i_j \partial b^p_q} \nabla_l b^i_j \nabla^l b^p_q + \sigma|\tA|^2 f + c D|\tA|^2 \sum \left|\frac{\partial f}{\partial b^i_j}\right| \\
&\leq -\frac{\partial^2 f}{\partial b^i_j \partial b^p_q} \nabla_l b^i_j \nabla^l b^p_q + \sigma|\tA|^2 f + c D \tH^{1+\sigma}
\end{align*}
for $c = c(S, \Sigma_0, T, n, \epsilon)$.  In the last line we used the inequality \eqref{eqn:f-bounds}.

Lemma \ref{lemma:gradient-term} allows us to crucially obtain a gradient term wherever $f$ is non-negative: on $\spt f_+$ we have
\begin{align}
(\partial_t - \Delta) f 
&\leq \frac{2(1-\sigma)}{\tH} <\nabla \tH, \nabla f> - \frac{\sigma(1-\sigma)}{\tH^2} f|\nabla \tH|^2 \nonumber \\
&\quad + \frac{1}{\tH^{1-\sigma}} \frac{\partial^2 \tilde Q_{k+1}}{\partial b_{ij} \partial b_{pq}} \nabla_m b_{ij} \nabla_m b_{pq} + \sigma |\tA|^2 f + c D \tH^{1+\sigma} \nonumber \\
&\leq 2 \frac{|\nabla \tH||\nabla f|}{\tH} - \frac{1}{c} \frac{|\nabla \tA|^2}{\tH^{2-\sigma}} + c \tH^\sigma + \sigma |\tA|^2 f + cD \tH^{1+\sigma} \label{eqn:evolution-of-f} .
\end{align}

\begin{lemma}\label{lemma:convex-EVOLUTION-LIKE}
There are constants $c = c(S, \Sigma_0, T, n, k, \epsilon, \eta)$ and $C = C(c, p, \sigma, D)$ such that whenever $p > p_0(c, n)$, we have
\begin{align*}
\partial_t \int_{\Sigma_t} f^p_k
&\leq -p^2/3 \int_{\Sigma_t} f^{p-2}_k |\nabla f|^2 - p/c \int_{\Sigma_t} f^{p-1}_k \frac{|\nabla \tA|^2}{\tH^{2-\sigma}} + c\int_{\partial\Sigma_t} f^{p-1}_k \tH^\sigma \\
&\quad + 2p\sigma \int_{A(k, t)} f^p \tH^2 + C \int_{A(k, t)} f^p + C |A(k, t)|  - 1/5 \int_{\Sigma_t} f^p_k \tH^2
\end{align*}
\end{lemma}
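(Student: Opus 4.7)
My plan is to differentiate $\int_{\Sigma_t} f^p_k$ in time, plug in the pointwise bound \eqref{eqn:evolution-of-f} for $f$ on the super-level set $\{f > k\}$, integrate by parts, and then absorb each resulting error into either the negative gradient term $-p(p-1)\int f^{p-2}_k|\nabla f|^2$ produced by the Laplace part, the good term $-(p/c)\int f^{p-1}_k|\nabla\tA|^2/\tH^{2-\sigma}$ already present in \eqref{eqn:evolution-of-f}, or the absorbing $-\tfrac{1}{5}\int f^p_k\tH^2$ obtained from $\partial_t dV = -H^2 dV$ after converting $H^2 \geq \tH^2/c - c$. The boundary contribution $p\int_{\partial\Sigma_t} f^{p-1}_k Nf$ coming out of integration by parts is retained as is, since Theorem \ref{theorem:f-boundary-derivative} controls $|Nf|$ by $c\tH^\sigma$.

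The main computation is handling the cross term $2pf^{p-1}_k|\nabla\tH||\nabla f|/\tH$ from \eqref{eqn:evolution-of-f}. I would Peter--Paul it with weight $p$ to split as $(p^2/6)f^{p-2}_k|\nabla f|^2 + 6f^p_k|\nabla\tH|^2/\tH^2$: the first piece reduces the Laplace gradient from $-p(p-1)$ to $-p^2/3$, and for the second, $|\nabla\tH|\leq c|\nabla\tA|$ combined with $f \leq c\tH^\sigma$ from \eqref{eqn:f-bounds} rewrites it as $6c\, f^{p-1}_k|\nabla\tA|^2/\tH^{2-\sigma}$, absorbable into the good term for $p$ sufficiently large in terms of $c$.

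The zeroth-order contributions $c\tH^\sigma$ and $cD\tH^{1+\sigma}$ in \eqref{eqn:evolution-of-f} — once multiplied by $pf^{p-1}_k$ — are dispatched via Remark \ref{remark:handle-low-H-powers} with $g=f_k$, $r\in\{\sigma,1+\sigma\}$, $q=p-1$. The hypothesis $rp/q<2$ reduces to $p(1-\sigma)>2$, which pins down the threshold $p>p_0(c,n)$. Each produces a small $\mu^{-1}\int f^p_k\tH^2$ (absorbed into $-\tfrac{1}{5}\int f^p_k\tH^2$), a $C\int f^p_k$ piece, and a $C|A(k,t)|$ piece. The remaining $\sigma|\tA|^2 f$ term, upon writing $f=f_k+k$ and using $|\tA|^2 \leq c\tH^2$, yields the target $cp\sigma\int_{A(k,t)} f^p\tH^2$ (which is the $2p\sigma$-term in the statement after fixing the universal constant) plus a strictly lower-order $p\sigma k\int f^{p-1}_k|\tA|^2$. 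This last residual splits by Young's inequality $f^{p-1}_k \leq \epsilon f^p_k + C(\epsilon,p)$: the $\epsilon f^p_k\tH^2$ part is absorbed into $-\tfrac{1}{5}\int f^p_k\tH^2$, and the $C(\epsilon,p)\tH^2\chi_{A(k,t)}$ part reduces to the $C\int f^p + C|A(k,t)|$ pieces via another application of Remark \ref{remark:handle-low-H-powers}.

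The principal obstacle is the Peter--Paul reduction of the cross term: absorbing the residue $f^p_k|\nabla\tH|^2/\tH^2$ into $|\nabla\tA|^2/\tH^{2-\sigma}$ is possible \emph{only because} the $\tH^\sigma$ multiplier built into $f$ gives $f \leq c\tH^\sigma$, which is exactly what upgrades a $\tH^{-2}$ weight to the $\tH^{-2+\sigma}$ required by the good term. Without this perturbation the cross-term residue would not fit and the entire scheme would collapse; the same observation explains why the $cD\tH^{1+\sigma}$ term forces the lower bound $p(1-\sigma)>2$ rather than the naive $p>1$.
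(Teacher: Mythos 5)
Your proposal is correct and follows essentially the same route as the paper: differentiate $\int f_k^p$ in time, substitute the pointwise inequality \eqref{eqn:evolution-of-f}, integrate $p f_k^{p-1}\Delta f$ by parts to produce the $-p(p-1)$ gradient term and the boundary term (controlled by Theorem \ref{theorem:f-boundary-derivative}), Peter--Paul the cross term and absorb its residue into the good $|\nabla\tA|^2/\tH^{2-\sigma}$ term via $f\le c\tH^\sigma$ and $|\nabla\tH|=O(|\nabla\tA|)$, handle the zeroth-order pieces with Remark \ref{remark:handle-low-H-powers}, and extract $-\tfrac15\int f_k^p\tH^2$ from $-\int f_k^p H^2$. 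The only (harmless) stylistic deviation is your handling of $\sigma|\tA|^2 f$: writing $f=f_k+k$ and Young-ing the leftover $k f_k^{p-1}$ piece is more work than simply observing $f_k^{p-1}f\le f^p\chi_{A(k,t)}$, which gives the $cp\sigma\int_{A(k,t)}f^p\tH^2$ term directly (the ``$2p\sigma$'' in the statement is really $cp\sigma$, as the $(\star)$ template makes explicit).
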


\begin{proof}
We have by equation \eqref{eqn:evolution-of-f} (all integrals over $\Sigma_t$ unless stated),
\begin{align*}
\partial_t \int f^p_k
&= p \int f^p_k \Delta f - \int f^p_k H^2 \\
&\leq -p(p-1) \int f^{p-2}_k |\nabla f|^2 + p\int_{\partial\Sigma} f^{p-1}_k |N f| + p^2/3 \int f^{p-2}_k |\nabla f|^2 \\
&\quad + 3c \int f^{p-1}_k \frac{|\nabla \tH|^2}{\tH^{2-\sigma}} - p/c \int f^{p-1}_k \frac{|\nabla \tA|^2}{\tH^{2-\sigma}} + pc \int f^{p-1}_k \tH^\sigma \\
&\quad + p\sigma \int_{A(k, t)} f^p |\tA|^2 + cD \int f^{p-1}_k \tH^{1+\sigma} - \int f^p_k H^2
\end{align*}
provided $p > 2c^2n$.  Here $c = c(S, \Sigma_0, T, n, \epsilon, \eta)$ and $C = C(c, p, \sigma, D)$.  The last term results from
\[
H^2 = \left( \frac{1}{1+n\epsilon} \tH + O(1)\right)^2 \geq \frac{1}{4} \tH^2 + O(1) .
\]
The boundary term is handled by Theorem \ref{theorem:f-boundary-derivative}.  And the other terms are handled by Peter-Paul and/or Remark \ref{remark:handle-low-H-powers}.
\end{proof}


We obtain a (POINCARE-LIKE) equation for $f$.

\begin{lemma}\label{lemma:convex-laplace-inequality}
For $\epsilon < \epsilon_0(n, k, \eta)$ we have on $\spt f_+$ 
\begin{align*}
\frac{\partial \tS_k}{\partial b_{ij}} \nabla_i \nabla_j \tS_{k+1}
&\geq \frac{\partial \tS_k}{\partial b_{ij}} \frac{\partial^2 \tS_{k+1}}{\partial b_{lm} \partial b_{pq}} \nabla_i b_{lm} \nabla_j b_{pq} + \frac{\partial \tS_k}{\partial b_{ij}} \frac{\partial \tS_{k+1}}{\partial b_{lm}} \nabla_l \nabla_m b_{ij} \\
&\quad + \frac{\epsilon}{1+n\epsilon} \left( (n-k) \tS_k \frac{\partial \tS_k}{\partial b_{ij}} - (n-k+1) \tS_{k-1} \frac{\partial \tS_{k+1}}{\partial b_{ij}} \right) \nabla_i\nabla_j \tH \\
&\quad + \frac{1}{2}\eta \tH^2\tS^2_k - cD|\tA|^{2k}(D + |\tA|) - c|\tA|^{2k-1} |\nabla \tA|
\end{align*}
\end{lemma}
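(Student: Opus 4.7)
The plan is to expand $\frac{\partial\tS_k}{\partial b_{ij}}\nabla_i\nabla_j\tS_{k+1}$ via the chain rule, which directly produces the first term of the claim, and then reshape the remaining quantity $\frac{\partial\tS_k}{\partial b_{ij}}\frac{\partial\tS_{k+1}}{\partial b_{lm}}\nabla_i\nabla_j b_{lm}$ by swapping the two covariant derivatives falling on $b_{lm}$.

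To carry out the swap, write $b_{lm} = h_{lm} + T_{lm\nu} + (\epsilon H + D)g_{lm}$. Codazzi for $h$ gives $\nabla_i b_{lm} = \nabla_l b_{im} + \epsilon(\nabla_i H\, g_{lm} - \nabla_l H\, g_{im}) + O(1+|\tA|)$, where the error tracks $\nabla T$ via Proposition \ref{prop:tensor-evolution}. Applying a second derivative and invoking the Ricci identity together with Gauss's equation $R_{ijab} = h_{ia}h_{jb} - h_{ib}h_{ja}$ yields
\begin{equation*}
\nabla_i\nabla_j b_{lm} = \nabla_l\nabla_m b_{ij} + \epsilon g_{lm}\nabla_i\nabla_j H - \epsilon g_{ij}\nabla_l\nabla_m H + Q_{ijlm} + O(|\tA||\nabla\tA|+|\tA|^2),
\end{equation*}
where $Q_{ijlm}$ collects the cubic commutator/Gauss contributions. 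Contracting with $\frac{\partial\tS_k}{\partial b_{ij}}\frac{\partial\tS_{k+1}}{\partial b_{lm}}$, the first summand recovers the second term of the claim, and the two $\epsilon$-pieces, via the symmetric-function identities $g_{lm}\partial\tS_{k+1}/\partial b_{lm} = (n-k)\tS_k$ and $g_{ij}\partial\tS_k/\partial b_{ij} = (n-k+1)\tS_{k-1}$ together with the substitution $\nabla\nabla H = \frac{1}{1+n\epsilon}\nabla\nabla\tH + (\text{error})$ (valid since $\tH = (1+n\epsilon)H + O(1+|\tA|)$), produce the $\frac{\epsilon}{1+n\epsilon}$ line of the statement.

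The cubic piece $\frac{\partial\tS_k}{\partial b_{ij}}\frac{\partial\tS_{k+1}}{\partial b_{lm}}Q_{ijlm}$ becomes, after writing $h = b - T_{\cdot\cdot\nu} - (\epsilon H + D)g$ at the cost of a $cD|\tA|^{2k}(D+|\tA|)$ error, a symmetric polynomial in $\tlambda$ of degree $2k+2$. The main obstacle is showing that under the two-sided pinching $-B\tS_k\tS_1 \leq \tS_{k+1} \leq -\eta\tS_k\tS_1$ this polynomial is bounded below by $\eta\tH^2\tS_k^2$ plus absorbable remainders; this adapts the Newton--MacLaurin-type computation of \cite{huisken-sinestrari:convex-pinching}, using our inductive lower bound $\tS_l \geq c(n)\epsilon^{l-1}\tH^l$ (Remark \ref{remark:tilde-S-bounded-below}) to handle the perturbed curvature $\tA$. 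Keeping half of the resulting positive term leaves enough slack to absorb the $c|\tA|^{2k-1}|\nabla\tA|$ error generated by $\nabla T$ and the $H \leftrightarrow \tH$ substitution, yielding $\tfrac{1}{2}\eta\tH^2\tS_k^2$ as in the conclusion.
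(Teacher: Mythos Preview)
Your approach is essentially the paper's: chain-rule expansion, derivative swap via Codazzi/Gauss/Ricci while tracking the $T$ and $\epsilon Hg$ errors, the symmetric-function identities $g_{lm}\,\partial\tS_{k+1}/\partial b_{lm}=(n-k)\tS_k$ and $g_{ij}\,\partial\tS_k/\partial b_{ij}=(n-k+1)\tS_{k-1}$ for the $\epsilon$-piece, the $H\leftrightarrow\tH$ substitution, conversion of the cubic commutator from $h$ to $b$, and finally the polynomial estimate of \cite{huisken-sinestrari:convex-pinching} (their Lemma~2.15 and Corollary~2.16). The paper carries out the commutator step with $\bar h$ rather than $h$, but since $\bar h=h+O(1)$ this is cosmetic.

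One point is wrong, however. You write that the factor $\tfrac12$ in $\tfrac12\eta\tH^2\tS_k^2$ comes from absorbing the gradient error $c|\tA|^{2k-1}|\nabla\tA|$ into the positive term. This cannot be right: $|\nabla\tA|$ is not pointwise controlled by any power of $|\tA|$, so no such absorption is possible, and indeed the statement of the lemma keeps $-c|\tA|^{2k-1}|\nabla\tA|$ as a separate error (it is only removed later, after integration by parts, in Lemma~\ref{lemma:convex-POINCARE-LIKE}). The $\tfrac12$ instead arises inside the Corollary~2.16 argument: on $\spt f_+$ the dominant piece of the cubic polynomial is $-\tH\tS_k\tS_{k+1}\geq\eta\tH^2\tS_k^2$, and the remaining polynomial terms (including the $\epsilon$-corrections of order $\epsilon\tH\cdot|\tA|^{2k+1}$ and $\epsilon^2\tH^2\cdot|\tA|^{2k}$) are absorbed into half of this---which is exactly why the hypothesis $\epsilon<\epsilon_0(n,k,\eta)$ is needed. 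Also, only the one-sided pinching $\tS_{k+1}\leq-\eta\tS_1\tS_k$ is assumed on $\spt f_+$; the lower bound you invoke is a consequence of $|\tA|\leq c\tH$ and Remark~\ref{remark:tilde-S-bounded-below}, not a hypothesis.
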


\begin{proof}
We follow the proof of Lemma 2.15 and Corollary 2.16 in \cite{huisken-sinestrari:convex-pinching}.  Recall we fixed $D = D_\epsilon$.  From Proposition \ref{prop:tensor-evolution} and Remark \ref{remark:orders} we have $\nabla_p \nabla_q T_{ij} = O(|\tA|^2 + |\nabla \tA|)$.  In particular, 
\[
\nabla_p \nabla_q \tH = (1+n\epsilon) \nabla_p \nabla_q H + O(|\tA|^2 + |\nabla \tA|)
\]
and
\begin{align*}
\nabla_i \nabla_j \bar h_{lm} - \nabla_l \nabla_m \bar h_{ij} &= \bar h_{ij} \bar h_{lr} \bar h_{rm} - \bar h_{lm} \bar h_{ir} \bar h_{rj} + \bar h_{im} \bar h_{lr} \bar h_{rj} - \bar h_{lj} \bar h_{mr} \bar h_{ri} \\
&\quad\quad + O(|\tA|^2 + |\nabla \tA|) .
\end{align*}
We therefore calculate
\begin{align*}
&\frac{\partial \tS_k}{\partial b_{ij}} \frac{\partial \tS_{k+1}}{\partial b_{lm}} \left( \nabla_i \nabla_j b_{lm} - \nabla_l \nabla_m b_{ij} \right) \\
&= \frac{\partial \tS_k}{\partial b_{ij}} \frac{\partial \tS_{k+1}}{\partial b_{lm}} \left[\nabla_i \nabla_j \bar h_{lm} - \nabla_l \nabla_m \bar h_{ij}  \right. \\
&\quad \left. + \frac{\epsilon}{1+n\epsilon} \left( \delta_{lm} \nabla_i\nabla_j \tH - \delta_{ij} \nabla_l \nabla_m \tH + O(|\tA|^2 + |\nabla \tA|) \right) \right] \\
&\geq \frac{\partial \tS_k}{\partial b_{ij}} \frac{\partial \tS_{k+1}}{\partial b_{lm}} \left[\nabla_i \nabla_j \bar h_{lm} - \nabla_l \nabla_m \bar h_{ij} + \frac{\epsilon}{1+n\epsilon} \left( \delta_{lm} \nabla_i\nabla_j \tH - \delta_{ij} \nabla_l \nabla_m \tH \right)\right] \\
&\quad - c|\tA|^{2k+1} - c|\tA|^{2k-1} |\nabla \tA| .
\end{align*}

Choose a frame which diagonalizes $\bar h_{ij}$, and hence $b_{ij}$, then
\begin{align*}
\frac{\partial \tS_k}{\partial b_{ij}} \frac{\tS_{k+1}}{\partial b_{lm}} (\nabla_i \nabla_j \bar h_{lm} - \nabla_l \nabla_m \bar h_{ij})
&= \frac{\partial \tS_k}{\partial \tilde \lambda_i} \frac{\partial \tS_{k+1}}{\partial \tilde \lambda_m} \left[ \blambda_i \blambda_m^2 - \blambda_i^2 \blambda_m + O(|\tA|^2 + |\nabla\tA|) \right] \\
&\geq \frac{\partial \tS_k}{\partial \tlambda_i} \frac{\partial \tS_{k+1}}{\partial \tlambda_m} \left[ \tlambda_i \tlambda_m^2 - \tlambda_i^2\tlambda_m + O(|\tA|^2 + |\nabla \tA|) \right. \\
&\quad\quad + \left(\frac{\epsilon}{1+n\epsilon} \tH + O(D)\right)^2 (\tlambda_m - \tlambda_i) \\
&\quad\quad \left. + \left(\frac{\epsilon}{1+n\epsilon} \tH + O(D)\right)(\tlambda_i^2 - \tlambda_m^2) \right]\\
&\geq \frac{\partial \tS_k}{\partial \tilde \lambda_i} \frac{\partial \tS_{k+1}}{\partial \tilde \lambda_m} \left[ \tlambda_i \tlambda_m^2 - \tlambda_i^2\tlambda_m \right. \\
&\quad\quad \left. + \left(\frac{\epsilon \tH}{1+n\epsilon}\right)^2 (\tlambda_m - \tlambda_i) + \left(\frac{\epsilon \tH}{1+n\epsilon}\right)(\tlambda_i^2 - \tlambda_m^2) \right] \\
&\quad -c D |\tA|^{2k} (D + |\tA|) - c |\tA|^{2k-1} |\nabla \tA|.
\end{align*}

Therefore, by precisely the same arguments at in Lemma 2.15 of \cite{huisken-sinestrari:convex-pinching}, we have for any $\epsilon > 0$
\begin{align*}
\frac{\partial \tS_k}{\partial b_{ij}} \nabla_i \nabla_j \tS_{k+1}
&\geq \frac{\partial \tS_k}{\partial b_{ij}} \frac{\partial^2 \tS_{k+1}}{\partial b_{lm} \partial b_{pq}} \nabla_i b_{lm} \nabla_j b_{pq} \\
&\quad + \frac{\partial \tS_k}{\partial b_{ij}} \frac{\partial \tS_{k+1}}{\partial b_{lm}} \nabla_l \nabla_m b_{ij} \\
&\quad + \frac{\epsilon}{1+n\epsilon} \left( (n-k) \tS_k \frac{\partial \tS_k}{\partial b_{ij}} - (n-k+1) \tS_{k-1} \frac{\partial \tS_{k+1}}{\partial b_{ij}} \right) \nabla_i\nabla_j \tH \\
&\quad - \tH \tS_k \tS_{k+1} + (k+1) \tS^2_k + k((k+1) \tS^2_{k+1} -(k+2) \tS_k\tS_{k+2}) \\
&\quad + \left(\frac{\epsilon \tH}{1+n\epsilon}\right)^2 \left[ (k+1)(n-k+1)\tS_{k+1}\tS_{k-1} - k(n-k) \tS^2_k \right] \\
&\quad + \left(\frac{\epsilon \tH}{1+n\epsilon}\right) \left[ (n-k)\tS_k (\tH \tS_k -(k+1)\tS_{k+1}) \right.\\
&\quad\quad \left. + (n-k+1)\tS_{k-1}((k+2)(\tS_{k+2} - \tH\tS_{k+1})\right] \\
&\quad - cD|\tA|^{2k}(D + |\tA|) - c|\tA|^{2k-1} |\nabla \tA|.
\end{align*}
And the Lemma follows by the same argument as in Corollary 2.16 of \cite{huisken-sinestrari:convex-pinching}.
\end{proof}

\begin{lemma}\label{lemma:convex-POINCARE-LIKE}
There is a constant $c = c(S, \Sigma_0, T, n, \epsilon, \eta, D)$ such that for any $p > 2$ and $\beta > 0$, we have
\begin{align*}
\frac{1}{c} \int_{\Sigma_t} f^p_+ \tH^2 
&\leq (p+p/\beta) \int_{\Sigma_t} f^{p-2}_+ |\nabla f|^2 + (1+\beta p) \int_{\Sigma_t} f^{p-1}_+ \frac{|\nabla \tA|^2}{\tH^{2-\sigma}} + \int_{\Sigma_t} f^p_+ \\
&\quad + \int_{\partial\Sigma_t} f^{p-1}_+\tH^\sigma
\end{align*}
\end{lemma}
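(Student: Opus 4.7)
The proof proceeds by multiplying the pointwise Laplace-type inequality of Lemma \ref{lemma:convex-laplace-inequality} by the weight $w = 2f_+^p/(\eta\tS_k^2)$, integrating over $\Sigma_t$, and integrating by parts once to move derivatives off the Hessian-type terms. Since $\tS_k \geq c(n)\epsilon^{k-1}\tH^k > 0$ on $\spt f_+$ by Remark \ref{remark:tilde-S-bounded-below}, $w$ is well-defined and of order $f_+^p/\tH^{2k}$; the leading term $\tfrac12\eta\tH^2\tS_k^2$ in Lemma \ref{lemma:convex-laplace-inequality} thus contributes exactly $\int_{\Sigma_t} f_+^p\tH^2$, producing the desired left-hand side. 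All other terms on the right of Lemma \ref{lemma:convex-laplace-inequality} must then be absorbed into the four quantities in the statement.

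The bulk of the work is to integrate by parts once each of the two second-derivative terms $\int w (\partial\tS_k/\partial b_{ij})\nabla_i\nabla_j \tS_{k+1}$ and $\int w\cdot(\mathrm{coef}_{ij})\nabla_i\nabla_j\tH$. On $\spt f_+$ we have the identity $\tS_{k+1} = -\tS_k\tH^{1-\sigma}(f+\eta\tH^\sigma)$, so that $\nabla\tS_{k+1} = -\tS_k\tH^{1-\sigma}\nabla f + R$ with $R = O(\tH^k|\nabla\tA|)$. The ``main'' cross term produced by IBP is $(2p/\eta)\int f_+^{p-1}(\partial\tS_k/\partial b_{ij})\nabla_i f\nabla_j f \cdot \tH^{1-\sigma}/\tS_k$, which, using $|\partial\tS_k/\partial b| \leq c\tH^{k-1}$ and $f_+\leq c\tH^\sigma$ from \eqref{eqn:f-bounds}, is at most $cp\int f_+^{p-1}|\nabla f|^2\tH^{-\sigma} \leq cp\int f_+^{p-2}|\nabla f|^2$, supplying the $p$ summand in $(p+p/\beta)$. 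All remaining cross terms pair $\nabla f$ or $\nabla\tA$; applying Young's with parameter $\beta$ yields $(p/\beta)\int f_+^{p-2}|\nabla f|^2$ and $(\beta p)\int f_+^{p-1}|\nabla\tA|^2/\tH^{2-\sigma}$. The explicit quadratic form in Lemma \ref{lemma:convex-laplace-inequality} of order $\tH^{2k-2}|\nabla\tA|^2$, after weighting, contributes directly the $(1)\int f_+^{p-1}|\nabla\tA|^2/\tH^{2-\sigma}$ summand.

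For the boundary integrals produced by IBP, at each $p\in\partial\Sigma_t$ I work in orthonormal coordinates with $\partial_1 = N$ and $(\partial_i)_{i>1}$ spanning $T_p\partial\Sigma_t$. The free-boundary relation $\bh_{N,X}=0$ for $X\in T_p\partial\Sigma_t$ together with the definition of $b_{ij}$ forces $b_{1j}=0$ for $j>1$, so $N$ is an eigendirection of $b$. Hence $(\partial\tS_k/\partial b_{ij})N_i=\partial\tS_k/\partial b_{1j}$ vanishes for $j>1$, leaving only the diagonal piece $(\partial\tS_k/\partial b_{11})\,N(\tS_{k+1})$. By Corollary \ref{cor:tilde-S-boundary} we have $N(\tS_{k+1})=O(\tH^{k+1})$, and $\partial\tS_k/\partial b_{11}=O(\tH^{k-1})$, so the boundary integrand is $O(w\cdot\tH^{2k})=O(f_+^p)$, bounded by $c\int_{\partial\Sigma_t} f_+^{p-1}\tH^\sigma$ via $f_+\leq c\tH^\sigma$. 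The $\nabla^2\tH$ boundary contribution is controlled identically using $N\tH = k_{\nu\nu}H = O(\tH)$ from Proposition \ref{prop:H-boundary-derivative}.

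Finally, the lower-order error $cD|\tA|^{2k}(D+|\tA|) + c|\tA|^{2k-1}|\nabla\tA|$ from Lemma \ref{lemma:convex-laplace-inequality}, weighted by $w\sim f_+^p/\tH^{2k}$, gives contributions bounded by $C\int f_+^p$ and $C\int f_+^p|\nabla\tA|/\tH$, the latter absorbed by Young's into the $|\nabla\tA|^2/\tH^{2-\sigma}$ bucket plus $\int f_+^p$. The chief technical difficulty is tracking how many powers of $p$ each IBP and each Young's application distributes among the two gradient-type terms, so that the coefficient structure $(p+p/\beta)$ and $(1+\beta p)$ comes out cleanly with the constant $c$ independent of $p,\beta$; the crucial structural input is the algebraic identity $\tS_{k+1}=-\tS_k\tH^{1-\sigma}(f+\eta\tH^\sigma)$, which supplies a single ``free'' factor of $p$ without invoking Young's.
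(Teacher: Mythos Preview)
Your approach is essentially the paper's: multiply the Laplace-type inequality of Lemma~\ref{lemma:convex-laplace-inequality} by an appropriate weight, integrate, move all Hessian terms to first order by a single integration by parts, control the boundary terms using the fact that $N$ is an eigendirection of $b$, and finish with Young's inequality. The paper differs only cosmetically: it first rewrites $\frac{\partial\tS_k}{\partial b_{ij}}\nabla_i\nabla_j\tS_{k+1}$ in terms of $\frac{\partial\tS_k}{\partial b_{ij}}\nabla_i\nabla_j f$ (plus Hessians of $\tS_k$ and $\tH$) before integrating, which makes the factor of $p$ from differentiating $f_+^p$ appear immediately, whereas you recover it after IBP via the identity $\nabla\tS_{k+1} = -\tS_k\tH^{1-\sigma}\nabla f + O(\tH^k|\nabla\tA|)$.

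Two small gaps to fix. First, Lemma~\ref{lemma:convex-laplace-inequality} contains \emph{three} second-order terms, not two: you omitted $\frac{\partial\tS_k}{\partial b_{ij}}\frac{\partial\tS_{k+1}}{\partial b_{lm}}\nabla_l\nabla_m b_{ij}$. It is handled by the same IBP, with boundary term controlled via $\frac{\partial\tS_{k+1}}{\partial b_{1m}} = \delta_{1m}\,O(\tH^k)$ and $\nabla_1 b_{ij} = O(|\tA|)$ for the surviving indices (Theorem~\ref{theorem:b_ij-boundary-derivatives}). Second, the error $cD|\tA|^{2k}(D+|\tA|)$ weighted by $w\sim f_+^p/\tH^{2k}$ produces a term of order $\int f_+^p\tH$, not $\int f_+^p$; you must split this via Young's into $\mu^{-1}\int f_+^p\tH^2 + C(\mu)\int f_+^p$ and absorb the first piece into the left-hand side by choosing $\mu$ large (this is exactly what the paper does, setting $\mu = 2c/\eta$).
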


\begin{proof}
Fix $\epsilon = \epsilon(\eta, n, k)$ as in Lemma \ref{lemma:convex-laplace-inequality}.  Using inequalities of Remarks \ref{remark:tilde-S-bounded-below} and \ref{remark:derivative-bounds}, we have for $c = c(S, \Sigma_0, T, n, k, \epsilon)$, 
\begin{align*}
\frac{\partial \tS_k}{\partial b_{ij}} \nabla_i\nabla_j f
&\leq -\tH^{\sigma-1} \tS^{-1}_k \frac{\partial \tS_k}{\partial b_{ij}} \nabla_i\nabla_j \tS_{k+1} + c \tH^{k+\sigma-3}|\nabla \tA|^2 + c\tH^{k-2}|\nabla f||\nabla \tA| \\
&\quad + \frac{\partial \tS_k}{\partial b_{ij}} \left[ \tH^{\sigma-1}\tS^{-2}_k\tS_{k+1}\nabla_i\nabla_j \tS_k - (\eta \tH^{\sigma-1} - (\sigma-1) \tH^{-1}f)\nabla_i\nabla_j \tH \right]
\end{align*}

Multiply by $f^p_+ \tH^{-k+1-\sigma}$, integrate, and use Lemma \ref{lemma:convex-laplace-inequality} to obtain
\begin{align*}
\frac{\eta}{2c \epsilon^{k-1}} \int f^p_+ \tH^2 &\leq \\
\frac{\eta}{2} \int \tS_k \tH^{2-k} f^p_+ 
&\leq -\int f^p_+ \tH^{-k+1-\sigma} \frac{\partial \tS_k}{\partial b_{ij}} \nabla_i\nabla_j f \\
&\quad - \int f^p_+ \tH^{-k} \tS^{-1}_k \frac{\partial \tS_k}{\partial b_{ij}} \left\{ -\tS^{-1}_k \tS_{k+1} \nabla_i \nabla_j \tS_k \right. \\
&\quad\quad \left. + \frac{\partial^2 \tS_{k_1}}{\partial b_{lm} \partial b_{pq}} \nabla_i b_{lm} \nabla_j b_{pq} + \frac{\partial \tS_{k+1}}{\partial b_{lm}} \nabla_l \nabla_m b_{ij} \right\} \\
&\quad + \int f^p_+ \tH^{-k} \frac{\partial \tS_k}{\partial b_{ij}} \left( -\eta - \frac{\epsilon}{1+n\epsilon}(n-k) + (\sigma - 1)\tH^{-\sigma}f \right) \nabla_i \nabla_j \tH \\
&\quad + \frac{\epsilon}{1+n\epsilon} (n-k+1)\int f^p_+ \tH^{-k} \tS_{k-1} \tS^{-1}_k \frac{\partial \tS_{k+1}}{\partial b_{ij}} \nabla_i\nabla_j \tH \\
&\quad + cD \int f^p_+ + cD^2 \int f^p_+ \tH + c \int f^p_+ \tH^{-1} |\nabla \tA| \\
&\quad + c \int f^p_+ \tH^{-2} |\nabla \tA|^2 + c\int f^p_+ \tH^{-1-\sigma}|\nabla \tA||\nabla f|
\end{align*}
where $c = c(S, \Sigma_0, T, n, \epsilon)$.  As usual all integrals are over $\Sigma_t$ unless otherwise stated.

Integrate by parts all double covariant derivatives, using Theorem \ref{theorem:f-boundary-derivative} and equation \eqref{eqn:tilde-S-boundary} to handle boundary terms.  After applying remarks \ref{remark:tilde-S-bounded-below} and \ref{remark:derivative-bounds}, we obtain that
\begin{align*}
\frac{\eta}{c}\int f^p_+ \tH^2 
&\leq c\int_{\partial\Sigma_t} f^{p-1}_+f + c \int f^{p-1}_+ \frac{|\nabla \tA|^2}{\tH^{2-\sigma}} + c \int f^p_+ \frac{|\nabla f||\nabla \tA|}{\tH^{1+\sigma}} \\
&\quad + cp \int f^{p-1}_+ \frac{|\nabla f||\nabla \tA|}{\tH} + cp \int f^{p-2}_+|\nabla f|^2 + c \int f^p_+ \frac{|\nabla \tA|}{\tH} \\
&\quad + \frac{1}{\mu} \int f^p_+ \tH^2 + C(c, D, \mu) \int f^p_+
\end{align*}
where $\mu > 0$ is arbitrary.  Set $\mu = 2c/\eta$.  Recalling that $f \leq c \tH^\sigma$, the Lemma follows by using Peter-Paul on the remaining terms.
\end{proof}


In view of Lemma \ref{lemma:f-bounded-implies} and Theorem \ref{theorem:f-is-bounded}, to finish proving Theorem \ref{theorem:convexity-pinching} we merely need to show $f_+$ satisfies $(\star)$ of Section \ref{section:stampacchia}.  In the language of Section \ref{section:stampacchia}, let $\tH$ be itself (the twice-perturbed mean curvature), and $\tilde G = |\nabla \tilde A|$.  Then Lemmas \ref{lemma:convex-EVOLUTION-LIKE} and \ref{lemma:convex-POINCARE-LIKE} imply $f_+$ satisfies $(\star)$.  We are done.

\section{Umbilic pinching when $S = S^n$}
\label{section:umbilic-pinching}

We consider the case when $\Sigma_0$ is strictly convex and $S$ is the sphere $S^n$.  We prove the umbilic pinching Theorem \ref{theorem:umbilic-pinching}.  By Proposition \ref{prop:H-boundary-derivative} we know that $T \leq T_0(\Sigma_0) < \infty$.

Notice in this case $h_{N, X} \equiv 0$ for all $X \in T_p\partial\Sigma$, so the estimates of Lemma \ref{lemma:boundary-derivatives} give us directly boundary conditions on the principle curvatures.  We can therefore work with the unperturbed second fundamental form.  In conjunction with the following Remark we have
\begin{equation}\label{eqn:spherical-boundary-derivative}
NH = H, \quad N |A| = O(H) .
\end{equation}

\begin{remark}\label{remark:convexity-preserved}
By Theorem 9.7 in \cite{stahl:singularity}, there is an $\epsilon = \epsilon(\Sigma_0, n)$ such that
\begin{equation}
h_{ij} \geq \epsilon H g_{ij}
\end{equation}
for all $t \in [0, T)$.  Hence the pointwise estimates of Lemma 2.3 in \cite{huisken:umbilic-pinching} continue to hold in the spherical-free-boundary case.
\end{remark}

Arguing as in \cite{huisken:umbilic-pinching}, to prove Theorem \ref{theorem:umbilic-pinching} it will suffice to show the following Theorems.
\begin{theorem}\label{theorem:umbilic-f-is-bounded}
For any $\eta > 0$, we have
\[
|A|^2 - \frac{1}{n}H^2 \leq \eta H^2 + C(\eta, \Sigma_0, n) .
\]
\end{theorem}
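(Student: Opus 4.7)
The plan is to invoke the general framework of Section \ref{section:stampacchia} for the quantity
\[
f = \frac{|A|^2 - \bigl(\tfrac{1}{n} + \eta\bigr) H^2}{H^{2-\sigma}},
\]
and to show $f_+$ is uniformly bounded in spacetime. Once this is known the theorem follows: on $\{f > 0\}$ we obtain $|A|^2 - H^2/n - \eta H^2 \leq C H^{2-\sigma}$, and Young's inequality absorbs $CH^{2-\sigma}$ into $\eta H^2 + C(\eta)$ (up to relabeling $\eta$). Because $S = S^n$, no perturbation of the second fundamental form is needed: the identities \eqref{eqn:spherical-boundary-derivative} directly control the boundary behavior of $H$ and $|A|$. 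In the notation of $(\star)$, I would take $\tilde H = H$, $\tilde G = |\nabla A|$, and $f_\alpha = f H^{-\sigma}$.

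First I would combine the evolution equations of Proposition \ref{prop:general-evolution-equations} into
\[
(\partial_t - \Delta)\bigl(|A|^2 - \tfrac{1}{n}H^2\bigr) = 2|A|^2\bigl(|A|^2 - \tfrac{1}{n}H^2\bigr) - 2\bigl(|\nabla A|^2 - \tfrac{1}{n}|\nabla H|^2\bigr).
\]
By Remark \ref{remark:convexity-preserved} and Lemma 2.3 of \cite{huisken:umbilic-pinching}, convexity yields both the pointwise non-negativity $|A|^2 - H^2/n \geq 0$ and a Kato-type inequality $|\nabla A|^2 - \tfrac{1}{n}|\nabla H|^2 \geq \delta(n)|\nabla A|^2$, which plays the role that Lemma \ref{lemma:gradient-term} played in the convexity pinching argument. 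Feeding these into the quotient-rule identity \eqref{eqn:useful-formula} produces, on $\spt f_+$,
\[
(\partial_t - \Delta) f \leq \frac{2}{H}\langle \nabla f, \nabla H\rangle - \frac{1}{c}\frac{|\nabla A|^2}{H^{2-\sigma}} + \sigma |A|^2 f + c H^\sigma.
\]

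The boundary contributions are handled using \eqref{eqn:spherical-boundary-derivative}: a direct computation from $NH = H$ and $N|A| = O(H)$, together with the a priori bound $f \leq cH^\sigma$, yields $|Nf| \leq c H^\sigma$, the umbilic analogue of Theorem \ref{theorem:f-boundary-derivative}. Multiplying the evolution inequality by $p f_k^{p-1}$, integrating by parts, and absorbing the boundary term via Lemma \ref{lemma:push-boundary-inside} then gives the EVOLUTION-LIKE inequality of $(\star)$. For POINCARE-LIKE I would multiply the evolution inequality by $f_+^{p-1} H^{\sigma-2}$ and integrate, exploiting on $\{f > 0\}$ the reaction estimate
\[
|A|^2\bigl(|A|^2 - \tfrac{1}{n}H^2\bigr) \geq \eta H^2 \cdot f H^{2-\sigma},
\]
so that the reaction term controls $\int f_+^p H^2$ up to a factor of $\eta^{-1}$. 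Integration by parts of the $\Delta$-term supplies the remaining gradient and boundary contributions demanded by $(\star)$.

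The main obstacle is the same one that arises in Section \ref{section:convexity-pinching}: balancing the gradient term $|\nabla A|^2/H^{2-\sigma}$ against the cross term $H^{-1}\langle \nabla f, \nabla H\rangle$ via Peter-Paul while keeping the weights in $H$ consistent with $(\star)$. Convexity makes this substantially cleaner than in the general convexity-pinching case, since $|A|^2 - H^2/n \geq 0$ pointwise eliminates the need to separately track the sign of $f$ in the Kato step, and the sphere boundary condition bypasses the two-stage perturbation $A \to \bar A \to \tilde A$ entirely. Once $(\star)$ is verified, Theorem \ref{theorem:f-is-bounded} delivers the uniform bound on $f_+$ and completes the proof.
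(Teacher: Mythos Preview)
Your EVOLUTION-LIKE step and the boundary analysis are essentially right, and your overall strategy (plug into Theorem \ref{theorem:f-is-bounded}) matches the paper's. The genuine gap is in how you propose to obtain the POINCARE-LIKE inequality.

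You say you will ``multiply the evolution inequality by $f_+^{p-1}H^{\sigma-2}$ and integrate, exploiting the reaction estimate $|A|^2(|A|^2-\tfrac{1}{n}H^2)\geq \eta H^2\cdot fH^{2-\sigma}$.'' But that reaction term does not survive in the evolution of $f$: once you divide $|A|^2-(\tfrac{1}{n}+\eta)H^2$ by $H^{2-\sigma}$, the quotient rule \eqref{eqn:useful-formula} cancels the large reaction against the $H$-evolution, leaving only $\sigma|A|^2f$. That is precisely the inequality you wrote down, and no term of the form $|A|^2(|A|^2-H^2/n)$ appears in it. More fundamentally, POINCARE-LIKE is a purely \emph{spatial} estimate at each fixed time; any derivation starting from $(\partial_t-\Delta)f\leq\cdots$ leaves an uncontrolled $\partial_t f$ term.

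The paper (Lemma \ref{lemma:umbilic-integrate-laplace}, following Lemma 5.4 of \cite{huisken:umbilic-pinching}) instead obtains POINCARE-LIKE from the \emph{Simons identity}. Computing $\Delta(|A|^2-\tfrac{1}{n}H^2)$ directly produces the term $2Z$ with $Z=H\,\mathrm{tr}(A^3)-|A|^4$, and convexity (Remark \ref{remark:convexity-preserved}) gives the pointwise bound $Z\geq n\epsilon^2 H^2(|A|^2-H^2/n)$. This is what generates $\int f^pH^2$ on the left after multiplying by $f^{p-1}$ and integrating by parts; the remaining second-order terms $h^0_{ij}\nabla_i\nabla_j H$ and $\Delta H$ are what force the gradient terms on the right to be $|\nabla H|^2/H^{2-\sigma}$ rather than $|\nabla A|^2/H^{2-\sigma}$. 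Accordingly the paper takes $\tilde G=|\nabla H|$, not $|\nabla A|$, and works with $f=(|A|^2-\tfrac{1}{n}H^2)/H^{2-\sigma}$ without the $\eta$ built in (the $\eta$ appears only at the end via $CH^{2-\sigma}\leq \eta H^2+C(\eta)$). Your Kato step is then used only in EVOLUTION-LIKE, where it yields the weaker good term $-\epsilon^2|\nabla H|^2/H^{2-\sigma}$ matching $\tilde G=|\nabla H|$.
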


\begin{theorem}\label{theorem:umbilic-gradient-estimate}
For any $\eta > 0$, we have
\[
|\nabla H|^2 \leq \eta H^4 + C(\eta, \Sigma_0, n) .
\]
\end{theorem}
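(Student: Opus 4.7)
The plan is to fit the gradient estimate into the Stampacchia iteration framework of Section \ref{section:stampacchia}. I would define
\[
f = \frac{(|\nabla H|^2 - \eta H^4)_+}{H^{3-\sigma}}
\]
for small $\sigma > 0$, taking $\tilde H = H$, $\tilde G = |\nabla A|$, and verifying the structural hypotheses $(\star)$ for $f$. Theorem \ref{theorem:f-is-bounded} then gives $f \leq C$ pointwise, hence $|\nabla H|^2 \leq \eta H^4 + C H^{3-\sigma}$ on $\spt f$, and AM--GM on the last factor yields $|\nabla H|^2 \leq 2\eta H^4 + C(\eta)$, which is the theorem (after relabeling $\eta$).

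For the (EVOLUTION-LIKE) inequality, I would use the standard identities
\[
(\partial_t - \Delta)|\nabla H|^2 = -2|\nabla^2 H|^2 + 2|A|^2|\nabla H|^2 + 4H h_{ij}\nabla^i H \nabla^j H
\]
together with $(\partial_t - \Delta)H = |A|^2 H$ and formula \eqref{eqn:useful-formula}. The Kato-type inequality $|\nabla A|^2 \geq \tfrac{3}{n+2}|\nabla H|^2$ (valid up to controlled error by Theorem \ref{theorem:umbilic-f-is-bounded} and Remark \ref{remark:convexity-preserved}), combined with the umbilic bound $|A|^2 \leq \tfrac{H^2}{n} + \eta H^2 + C$, yields on $\spt f$ an evolution of the form
\[
(\partial_t - \Delta)f \leq \frac{2(3-\sigma)}{H}\langle\nabla H,\nabla f\rangle - \frac{1}{c}\frac{|\nabla A|^2}{H^{2-\sigma}} + \sigma H^2 f + CH^{1+\sigma}.
\]
After multiplying by $p f_k^{p-1}$ and integrating, this produces the required (EVOLUTION-LIKE) structure modulo the boundary integral $\int_{\partial\Sigma_t} f_k^{p-1}H^\sigma$.

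For the boundary term, the spherical identities \eqref{eqn:spherical-boundary-derivative} together with the commutativity of covariant derivatives on scalars give $\nabla_1\nabla_i H = \nabla_i(NH) = \nabla_i H$ for tangential $i$, while $\nabla_1\nabla_1 H$ is controlled through the tangential trace of the evolution equation restricted to $\partial\Sigma$. Using $h_{N,X}\equiv 0$ for $X\in T\partial\Sigma$, these combine to yield $|N|\nabla H|^2| = O(H|\nabla H|) + O(H^3)$, and hence $|Nf| = O(H^\sigma)$, which is exactly the shape required by $(\star)$.

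The main obstacle is establishing (POINCARE-LIKE), i.e., bounding $\int_{\Sigma_t} f^p H^2$ by gradient terms and lower order contributions. Unlike the convex pinching argument of Section \ref{section:convexity-pinching}, where Lemma \ref{lemma:convex-laplace-inequality} produces the $\tilde H^2\tilde S_k^2$ term directly from an elliptic identity, here one must manufacture the $H^2$ factor from the defining inequality $\eta H^4 \leq |\nabla H|^2$ on $\spt f$, then convert $|\nabla H|^2$ to a divergence via the Bochner-type identity for $\Delta |\nabla H|^2$ and integrate by parts to trade it for $|\nabla^2 H|^2$ and $\langle \nabla H, \nabla \Delta H\rangle$. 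The latter term, using the evolution of $H$, contributes $|A|^2$-weighted derivatives that reconnect to $|\nabla A|^2/H^{2-\sigma}$ after another integration by parts, while the boundary contributions are absorbed using Lemma \ref{lemma:push-boundary-inside}. Balancing all the $p$- and $\sigma$-dependent constants through Peter--Paul to match the template of $(\star)$ is the delicate step; once achieved, Theorem \ref{theorem:f-is-bounded} closes the argument.
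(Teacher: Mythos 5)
Your proposal has a genuine gap at the boundary term. For $N|\nabla H|^2$ you claim the estimate $|N|\nabla H|^2| = O(H|\nabla H|) + O(H^3)$, arguing that $\nabla_1\nabla_1 H$ can be controlled ``through the tangential trace of the evolution equation restricted to $\partial\Sigma$.'' This does not work. In the boundary computation one picks up the term $(\nabla_1\nabla_1 H)(\nabla_1 H)$; since $\nabla_1 H = NH = H > 0$ at the boundary, this leaves an uncontrolled factor $H\cdot\nabla_1\nabla_1 H$. The evolution equation gives $\nabla_1\nabla_1 H = \Delta H - \sum_{i>1}\nabla_i\nabla_i H = \partial_t H - |A|^2 H - \sum_{i>1}\nabla_i\nabla_i H$, but $\partial_t H$ at the boundary has no a priori pointwise bound near the singular time --- bounding it is essentially what the theorem is trying to do. The paper explicitly flags this: ``We cannot obtain a boundary condition on $|\nabla H|^2$.''

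The paper's key device, which your argument misses, is to replace $\nabla H$ by $\nabla H - HV$, where $V = \bar V^T$ is the tangential projection of a fixed extension $\bar V$ of $\nu_S$ with $\bar\nabla_{\nu_S}\bar V \equiv 0$ on $S^n$. At the boundary $V = N$, so the normal component $\nabla_1 H - HV_1 = H - H = 0$; consequently the dangerous term $(\nabla_1\nabla_1 H - \nabla_1(HV_1))(\nabla_1 H - HV_1)$ is killed by its second factor, and one gets the clean identity $N|\nabla H - HV|^2 = 0$ (Lemma \ref{lemma:umbilic-gradient-boundary}). Note also that the paper does not run this step through the Stampacchia machinery of Section \ref{section:stampacchia} at all. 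Instead it builds a single auxiliary function $g = |\nabla H - HV|^2/H + bH(|A|^2 - H^2/n) + ba|A|^2 - \eta H^3 + D$, shows $N(g\phi) \leq 0$ and $(\partial_t - \Delta)(g\phi) \leq C\phi + 2\langle\nabla g, \nabla\phi\rangle$ for the cutoff $\phi$, and concludes by the maximum principle that $g\phi$ grows at most linearly and hence is bounded on $[0,T)$. This is considerably simpler than the (POINCARE-LIKE)/(EVOLUTION-LIKE) route you sketch, and in particular avoids the ``delicate balancing'' you acknowledge is unresolved in your approach.
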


We first prove Theorem \ref{theorem:umbilic-f-is-bounded}.  For $\sigma > 0$ define the function
\begin{equation}
f = \frac{|A|^2 - \frac{1}{n} H^2}{H^{2-\sigma}} = \frac{H^\sigma}{2n} \sum_{i, j} \frac{(\lambda_i - \lambda_j)^2}{H^2} .
\end{equation}
By Remark \ref{remark:convexity-preserved} and equations \ref{eqn:spherical-boundary-derivative}, we have
\begin{equation}\label{eqn:umbilic-boundary-derivative}
f = O(H^\sigma), \quad N f = O(H^\sigma) .
\end{equation}

Clearly to prove Theorem \ref{theorem:umbilic-f-is-bounded} it suffices to show $f$ is bounded in spacetime for some choice of $\sigma > 0$.  We shall demonstrate in the next two Lemmas that $f$ satisfies the (EVOLUTION-LIKE) and (POINCARE-LIKE) equations of Section \ref{section:stampacchia}.

\begin{lemma}\label{lemma:umbilic-integrate-laplace}
There is a constant $c = c(n, \epsilon)$ such that for every $\eta > 0$ we have
\begin{align*}
\frac{1}{c} \int_{\Sigma_t} f^p H^2
&\leq (\eta p + 1) \int_{\Sigma_t} \frac{|\nabla H|^2}{H^{2-\sigma}} f^{p-1} + \frac{p}{\eta} \int_{\Sigma_t} |\nabla f|^2 f^{p-2} \\
&\quad + \int_{\partial\Sigma} f^{p-1} H^\sigma
\end{align*}
\end{lemma}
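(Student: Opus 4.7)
The plan is to derive the inequality from a pointwise Simons-type identity. Using the standard formula $\tfrac{1}{2}\Delta|A|^2 = |\nabla A|^2 + h^{ij}\nabla_i\nabla_j H + H\tr(A^3) - |A|^4$ together with the convexity bound $h_{ij} \geq \epsilon H g_{ij}$ from Remark \ref{remark:convexity-preserved}, first rewrite $H\tr(A^3) - |A|^4 = \tfrac{1}{2}\sum_{i,j}\lambda_i\lambda_j(\lambda_i - \lambda_j)^2$. The bound $\lambda_i\lambda_j \geq \epsilon^2 H^2$ combined with $\sum_{i,j}(\lambda_i-\lambda_j)^2 = 2n(|A|^2 - \tfrac{1}{n}H^2) = 2nG$ (writing $G := |A|^2 - \tfrac{1}{n}H^2 = fH^{2-\sigma}$) then yields the pointwise lower bound $\tfrac{1}{2}\Delta|A|^2 \geq |\nabla A|^2 + h^{ij}\nabla_i\nabla_j H + c_0 GH^2$ with $c_0 = n\epsilon^2$.

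Next, multiply by the nonnegative weight $\phi := f^{p-1}H^{\sigma-2}$ and integrate; since $GH^2\phi = f^p H^2$, the left-hand side becomes the desired $c_0\int_{\Sigma_t}f^pH^2$. Integrating $\int\tfrac{1}{2}\Delta|A|^2\,\phi$ and $\int h^{ij}\nabla_i\nabla_j H\,\phi$ by parts, and using Codazzi $\nabla_j h^{ij} = \nabla^i H$ on the latter to extract the pure $+\int|\nabla H|^2\phi$ term, produces interior gradient cross-terms plus boundary contributions of the form $\int_{\partial\Sigma}\bigl[\tfrac{1}{2}N|A|^2 - h^{ij}(\nabla_i H)N_j\bigr]\phi$. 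Both boundary integrands are $O(H^2)$ by \eqref{eqn:spherical-boundary-derivative} and Lemma \ref{lemma:boundary-derivatives}: $N|A|^2 = 2h^{ij}\nabla_N h_{ij} = O(|A|^2) = O(H^2)$, and since $h_{N,X} = 0$ for $X \in T\partial\Sigma$ on $S = S^n$ we have $h^{ij}(\nabla_i H) N_j = h_{NN}\cdot NH = O(H)\cdot H$. These contribute precisely the $\int_{\partial\Sigma}f^{p-1}H^\sigma$ term.

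The heart of the argument is controlling the bulk term $-\int\tfrac{1}{2}\nabla|A|^2\cdot\nabla\phi + \int h^{ij}\nabla_i H\,\nabla_j\phi$. Splitting $\tfrac{1}{2}\nabla|A|^2 = \tfrac{1}{2}\nabla G + \tfrac{H}{n}\nabla H$ combines these into $-\tfrac{1}{2}\int\nabla G\cdot\nabla\phi + \int\bigl(h^{ij}-\tfrac{H}{n}g^{ij}\bigr)\nabla_i H\,\nabla_j\phi$, where the trace-free tensor has norm exactly $\sqrt{G} = f^{1/2}H^{1-\sigma/2}$. Expanding $\nabla G = H^{2-\sigma}\nabla f + (2-\sigma)fH^{1-\sigma}\nabla H$ and $\nabla\phi = (p-1)f^{p-2}H^{\sigma-2}\nabla f + (\sigma-2)f^{p-1}H^{\sigma-3}\nabla H$ produces pure $p\,f^{p-2}|\nabla f|^2$ terms, pure $f^p H^{-2}|\nabla H|^2$ terms, and cross $|\nabla f||\nabla H|$ terms of coefficient $\sim p$. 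Applying Peter-Paul to each cross term with parameter $1/\eta$ distributes them as $\lesssim p/\eta$ into $\int f^{p-2}|\nabla f|^2$ and $\lesssim p\eta$ into $\int f^{p-1}|\nabla H|^2/H^{2-\sigma}$; the pointwise bound $f \leq CH^\sigma$ from \eqref{eqn:umbilic-boundary-derivative} converts all leftover $f^pH^{-2}|\nabla H|^2$ factors into $f^{p-1}|\nabla H|^2/H^{2-\sigma}$. The negative leftovers $-\tfrac{p-1}{2}\int f^{p-2}|\nabla f|^2$ from $-\int\tfrac{1}{2}\nabla G\cdot\nabla\phi$ and $-\int|\nabla A|^2\phi$ from Simons are simply dropped (they have the favorable sign).

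The main obstacle will be bookkeeping: verifying that the ``$+1$'' in $(\eta p + 1)$ is picked up precisely by the $O(1)$-coefficient Codazzi term $\int|\nabla H|^2\phi$ that appears unscaled, while the $\eta p$ summand exactly absorbs the Peter-Paul remainders from the cross terms. Any residual $\int f^p$-type term produced along the way can be swallowed into $\tfrac{1}{c}\int f^p H^2$ using $H \geq \min_{\Sigma_0} H > 0$ (Proposition \ref{prop:H-boundary-derivative}), so does not appear in the final estimate.
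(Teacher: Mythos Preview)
Your proposal is correct and is essentially the same argument as the paper's: both start from the pointwise lower bound $Z = H\tr(A^3) - |A|^4 \geq n\epsilon^2 H^2(|A|^2 - H^2/n)$, multiply by $f^{p-1}H^{\sigma-2}$, integrate by parts the second-order terms, and absorb all cross terms by Peter--Paul together with $|h^0| = \sqrt{G}$ and $f = O(H^\sigma)$. The only cosmetic difference is that the paper (following Huisken's Lemma~5.4) first packages the Simons identity into a formula for $\Delta f$ and integrates $f^{p-1}\Delta f$, $f^{p-1}H^{\sigma-2}\langle h^0,\nabla^2 H\rangle$, and $f^p H^{-1}\Delta H$ by parts, whereas you work directly from $\Delta|A|^2$; after expansion the terms coincide.
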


\begin{proof}
We follow the proof of Lemma 5.4 in \cite{huisken:umbilic-pinching}.  In consideration of Remark \ref{remark:convexity-preserved}, we have
\begin{align}
2n\epsilon^2 f^p H^2
&\leq \frac{2}{H^{2-\sigma}} f^{p-1} Z \nonumber \\
&\leq f^{p-1} \Delta f \label{eqn:umbilic-term2} \\
&\quad - \frac{2}{H^{2-\sigma}} f^{p-1} <h^0_{ij}, \nabla_i \nabla_j H> \label{eqn:umbilic-term3} \\
&\quad + \frac{2(1-\sigma)}{H} f^{p-1} <\nabla H, \nabla f> \nonumber \\
&\quad + \frac{2-\sigma}{H} f^p \Delta H . \label{eqn:umbilic-term5}
\end{align}
Here $h^0_{ij}$ is the trace-free second fundamental form, and $Z = H \tr(A^3) - |A|^4$.  We integrate the above relation, and integrate by parts terms \eqref{eqn:umbilic-term2}, \eqref{eqn:umbilic-term3} and \eqref{eqn:umbilic-term5}.  The resulting interior terms are handled by Peter-Paul, and the inequality $|h^0_{ij}| \leq f H^{2-\sigma} \leq H^2$.  To handle the boundary term use \eqref{eqn:spherical-boundary-derivative} and \eqref{eqn:umbilic-boundary-derivative}, and that $h_{N, X}$ vanishes when $X \in T_p \partial\Sigma$.
\end{proof}

Recall that $f$ satisfies the evolution inequality
\begin{equation}\label{eqn:umbilic-evolution-equation}
\partial_t f \leq \Delta f + \frac{2(1-\sigma)}{H} <\nabla H, \nabla f> - \epsilon^2 \frac{|\nabla H|^2}{H^{2-\sigma}} + \sigma |A|^2 f
\end{equation}

\begin{lemma}\label{lemma:umbilic-integrate-evolution}
We have, for $c = c(n, \epsilon)$, 
\begin{align*}
\partial_t \int_{\Sigma_t} f^p_k
&\leq -p^2/3 \int_{\Sigma_t} |\nabla f|^2 f^{p-2}_k - p/c \int_{\Sigma_t} \frac{|\nabla H|^2}{H^{2-\sigma}} f^{p-1}_k \\
&\quad + (\sigma p - 1) \int_{\Sigma_t} H^2 f^p + c p \int_{\partial\Sigma_t} f^{p-1}_k H^\sigma
\end{align*}
\end{lemma}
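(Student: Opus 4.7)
The plan is to differentiate $\int_{\Sigma_t} f_k^p$ in time, substitute the evolution inequality \eqref{eqn:umbilic-evolution-equation} for $\partial_t f$, and rearrange. Using $\partial_t dV = -H^2 dV$ together with $\partial_t f_k = \partial_t f$ on $\{f > k\}$, this gives
\[
\partial_t \int_{\Sigma_t} f_k^p = p \int_{\Sigma_t} f_k^{p-1} \partial_t f - \int_{\Sigma_t} f_k^p H^2,
\]
and plugging in \eqref{eqn:umbilic-evolution-equation} produces five contributions: a Laplacian term, a cross-gradient term, the good negative term $-p\epsilon^2 \int f_k^{p-1}|\nabla H|^2/H^{2-\sigma}$, the reaction term $\sigma p \int |A|^2 f \cdot f_k^{p-1}$, and the volume term $-\int f_k^p H^2$.

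The next step is to integrate the Laplacian term by parts. Since $\nabla f_k = \nabla f$ on $\{f>k\}$, this yields $-p(p-1) \int f_k^{p-2} |\nabla f|^2 + p \int_{\partial \Sigma_t} f_k^{p-1} Nf$, and the boundary derivative estimate \eqref{eqn:umbilic-boundary-derivative}, $|Nf| = O(H^\sigma)$ (itself a consequence of \eqref{eqn:spherical-boundary-derivative} together with Remark \ref{remark:convexity-preserved}), yields the claimed $cp \int_{\partial\Sigma_t} f_k^{p-1} H^\sigma$ boundary contribution. The cross-gradient term $\frac{2p(1-\sigma)}{H} \int f_k^{p-1} \nabla H \cdot \nabla f$ I would handle via Peter-Paul, splitting it against $\delta p^2 \int f_k^{p-2} |\nabla f|^2$ and $\frac{c}{\delta} \int f_k^p |\nabla H|^2/H^2$. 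To convert the latter into the desired form, I use that convexity (Remark \ref{remark:convexity-preserved}) together with the definition of $f$ gives $f \leq c H^\sigma$ on $\spt f$, hence $f_k^p/H^2 \leq c f_k^{p-1}/H^{2-\sigma}$. Choosing $\delta$ small (e.g.\ $\delta = 1/6$) makes the net coefficient of $\int f_k^{p-2}|\nabla f|^2$ at most $-p^2/3$ for $p \geq 2$, and for $p$ larger than a threshold depending on $(n,\epsilon)$ also makes the net coefficient of $\int f_k^{p-1}|\nabla H|^2/H^{2-\sigma}$ at most $-p/c$.

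For the reaction term, convexity (Remark \ref{remark:convexity-preserved}) yields $|A|^2 \leq H^2$, and since $f_k \leq f$ we have $f \cdot f_k^{p-1} \leq f^p$, so $\sigma p \int |A|^2 f \cdot f_k^{p-1} \leq \sigma p \int H^2 f^p$. Combining with the volume term $-\int f_k^p H^2$ (and absorbing using $f_k \leq f$) produces the claimed $(\sigma p - 1) \int H^2 f^p$ contribution, completing the proof. I expect the main obstacle to be the Peter-Paul bookkeeping: the single parameter $\delta$ must be tuned to simultaneously preserve both quadratic good terms with the stated coefficients, which fixes the implicit constant $c = c(n,\epsilon)$ and the threshold on $p$ through the convexity constant of Remark \ref{remark:convexity-preserved}; once that bookkeeping is in place, the remaining steps are routine.
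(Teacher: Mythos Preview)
Your approach is exactly the paper's: differentiate, substitute \eqref{eqn:umbilic-evolution-equation}, integrate the Laplacian by parts using \eqref{eqn:umbilic-boundary-derivative} for the boundary term, and Peter--Paul the cross term $\tfrac{2p(1-\sigma)}{H}\langle\nabla H,\nabla f\rangle$ against the two good gradient integrals. The paper's own proof is a two-line sketch invoking precisely these ingredients.

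One step in your write-up does not work as stated. In the last paragraph you combine the reaction term $\sigma p\int H^2 f\,f_k^{p-1}\le \sigma p\int H^2 f^p$ with the volume term $-\int H^2 f_k^p$ and claim that ``absorbing using $f_k\le f$'' yields $(\sigma p-1)\int H^2 f^p$. But $f_k\le f$ gives $-\int H^2 f_k^p \ge -\int H^2 f^p$, which is the wrong direction; you cannot bound $-\int H^2 f_k^p$ above by $-\int H^2 f^p$. What your computation actually produces is
\[
\sigma p\int_{A(k,t)} H^2 f^p \;-\; \int_{\Sigma_t} H^2 f_k^p,
\]
and this is precisely the shape required by the $(\star)$ condition (EVOLUTION-LIKE) of Section~\ref{section:stampacchia} (compare Lemma~\ref{lemma:convex-EVOLUTION-LIKE}, where the analogous two terms are kept separate). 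The displayed $(\sigma p-1)\int H^2 f^p$ in the lemma should be read as shorthand for these two contributions; as a single literal inequality it fails for $k>0$. So leave the two terms separate rather than forcing the combination, and the argument goes through.
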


\begin{proof}
Follows directly by \eqref{eqn:umbilic-evolution-equation}, and Proposition \ref{prop:general-evolution-equations}.  Use Peter-Paul to handle the inner product term, and equation \eqref{eqn:umbilic-boundary-derivative} to handle the boundary term obtained upon integration by parts.
\end{proof}

In view of Lemmas \ref{lemma:umbilic-integrate-laplace} and \ref{lemma:umbilic-integrate-evolution}, we can take $\tilde H = H$ and $\tilde G = |\nabla H|$ in Section \ref{section:stampacchia}.  Theorem \ref{theorem:umbilic-f-is-bounded} now follows by Theorem \ref{theorem:f-is-bounded}.

We prove Theorem \ref{theorem:umbilic-gradient-estimate}.  We cannot obtain a boundary condition on $|\nabla H|^2$, and therefore we work instead with the quantity $|\nabla H - HV|^2$.  Here we define $V \equiv \bar V^T$, with $\bar V$ a fixed vector field on $R^{n+1}$ extending $\nu_S$, such that $\bar\nabla_{\nu_S} \bar V \equiv 0$ on $S^n$.

Fix $\eta > 0$, and choose $\sigma = \sigma(\Sigma_0, n)$ and $C_0 = C_0(\Sigma_0, n)$ so that $f \leq C_0$.  Define
\begin{align*}
g &= \frac{|\nabla H - H V|^2}{H} + b H (|A|^2 - H^2/n) + ba |A|^2 - \eta H^3 + D
\end{align*}
for $a, b, D$ positive constants to be determined, depending only on $(\eta, \Sigma_0, n)$.  We will show $g$ is bounded in spacetime, which clearly suffices to prove Theorem \ref{theorem:umbilic-gradient-estimate} since
\[
|\nabla H - HV|^2 \geq \frac{1}{2}|\nabla H|^2 - c_n H^2 .
\]

\begin{lemma}\label{lemma:umbilic-gradient-evolution}
We have the evolution equations
\begin{align*}
&(\partial_t - \Delta) |\nabla H - H V|^2 \leq c H^2 |\nabla A|^2 + c H^4 - 2|\nabla (\nabla H - HV)|^2 \\
&(\partial_t - \Delta) \frac{|\nabla H - HV|^2}{H} \leq c H |\nabla A|^2 + c H^3
\end{align*}
for $c = c(\Sigma_0, n)$.
\end{lemma}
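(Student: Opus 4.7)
The plan is to set $W = \nabla H - HV$ and work from the standard identity
\[
(\partial_t - \Delta)|W|^2 = 2\langle (\partial_t - \Delta)W, W\rangle - 2|\nabla W|^2,
\]
so that the $-2|\nabla W|^2$ term in the first claim appears automatically, and the task reduces to showing $\langle (\partial_t - \Delta)W, W\rangle \leq cH^2|\nabla A|^2 + cH^4$.

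First I would compute $(\partial_t - \Delta)\nabla_i H$ by combining $\partial_t \nabla_i H = \nabla_i\bigl(|A|^2 H + \Delta H\bigr)$ with the Bochner commutator $\Delta \nabla_i H = \nabla_i \Delta H - \mathrm{Ric}_{ij}\nabla^j H$ and the Gauss equation $\mathrm{Ric}_{ij} = Hh_{ij} - h_{ik}h^k_j$. This produces an expression whose magnitude is $O(H|\nabla A|) + O(H^2|\nabla H|)$ after invoking convexity $|A| \leq cH$ from Remark~\ref{remark:convexity-preserved}. Next I would handle $(\partial_t - \Delta)(HV_i)$ by the product rule. The factor $(\partial_t - \Delta)H = |A|^2 H$ contributes $|A|^2 H V_i = O(H^3)$ since $|V|$ is bounded. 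For $(\partial_t - \Delta)V_i$, I would write $V_i = \langle \bar V, \partial_i F\rangle$ and differentiate using $\partial_t F = -H\nu$, $\partial_t \nu = \nabla H$, and $\partial_t \bar V|_F = -H \bar\nabla_\nu \bar V$, then compute $\Delta V_i$ via Gauss--Weingarten. Since $\bar V$ and its first two ambient derivatives are uniformly bounded on the finite-distance region (Proposition~\ref{prop:finite-dist}), every resulting term is controlled by $c(1 + |A| + |\nabla H|)$, and the cross term $-2\langle \nabla H, \nabla V\rangle$ from the product rule is treated likewise using $|\nabla V| = O(1 + |A|)$.

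Pairing with $W$ and using Cauchy--Schwarz with $|W| \leq |\nabla H| + cH$, every contribution to $\langle (\partial_t - \Delta)W, W\rangle$ is bounded by an expression of the form $\epsilon |\nabla H|^2 + c(\epsilon) H^2|\nabla A|^2 + c H^4$ after Peter--Paul. The key observation $|\nabla H|^2 \leq n|\nabla A|^2$ (since $H = \mathrm{tr}\,A$) then lets me absorb the residual $|\nabla H|^2$ pieces into $H^2|\nabla A|^2$, finishing the first inequality. For the second inequality I would apply the quotient formula \eqref{eqn:useful-formula} with $f = |W|^2$ and $g = H$: the term $-|W|^2 H^{-2} (\partial_t - \Delta)H = -|A|^2 |W|^2/H$ has the favorable sign, while the gradient correction $\tfrac{2}{H}\langle \nabla(|W|^2/H), \nabla H\rangle$ is expanded and bounded by Peter--Paul to extract $H^{-1}|\nabla W|^2$ (absorbed into the $-2|\nabla W|^2/H$ obtained from dividing the first inequality through by $H$) plus $cH|\nabla A|^2 + cH^3$.

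The main obstacle will be the careful organization of $(\partial_t - \Delta)V_i$: I need to separate the tangential and normal components of $\bar\nabla \bar V$ and $\bar\nabla^2 \bar V$, verify that no term depends on more than one derivative of the flow (no stray $|\nabla A|^2$ or $|A|^2|\nabla H|$ contributions that would exceed the declared order), and confirm that the only way $|A|^2$ enters $(\partial_t - \Delta)(HV)$ is through the $(\partial_t - \Delta)H$ factor so that the final bound on $H \cdot (\partial_t - \Delta)V$ scales like $H|A|$ rather than $H|A|^2$. Once this accounting is clean, the two evolution inequalities follow mechanically from Cauchy--Schwarz, Peter--Paul, and the convexity-based identity $|A| \leq cH$.
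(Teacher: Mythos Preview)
Your approach is correct and essentially identical to the paper's: both compute the heat operator on $|W|^2$ directly (the paper expands $\partial_t|W|^2$ and $\Delta|W|^2$ separately, you package this as $2\langle(\partial_t-\Delta)W,W\rangle - 2|\nabla W|^2$), and both use the quotient formula \eqref{eqn:useful-formula} together with Young's inequality to absorb the cross term into $-2|\nabla W|^2/H$ for the second estimate. One small correction: since the metric evolves by $\partial_t g_{ij} = -2Hh_{ij}$, your identity for $(\partial_t - \Delta)|W|^2$ picks up an extra $2Hh(W,W)$; this is $O(H^2|W|^2) = O(H^2|\nabla A|^2 + H^4)$ and is absorbed harmlessly, but it should be recorded.
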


\begin{proof}
By direct calculation we have that
\begin{align*}
\nabla_i V &= (\bar\nabla_i \bar V)^T = O(1)\\
\Delta V &= -H (\bar\nabla_\nu \bar V)^T + (\tr_{T\Sigma} \bar\nabla^2 \bar V)^T\\
&= O(H + 1) ,
\end{align*}
and 
\begin{align*}
\partial_t V_i
&= \partial_t <\bar V, \partial_i> \\
&= -H \bar\nabla_\nu \bar V_i - H h_{ij} V_j - \partial_i H <\bar V, \nu> .
\end{align*}

We have:
\begin{align*}
\frac{1}{2} \Delta |\nabla H - HV|^2
&= \left\{ \nabla_i \Delta H + \nabla_j H (H h_{ij} - h_{ik}h_{kj}) \right. \\
&\quad\quad \left. - \Delta H V_i + O(H^2 + H + |\nabla H|) \right\} (\nabla_i H - H V_i) \\
&\quad + |\nabla(\nabla H - H V)|^2
\end{align*}
and
\begin{align*}
\frac{1}{2} \partial_t |\nabla H - H V|^2 
&= \left\{ H h_{ij} \nabla_j H + \nabla_i (\Delta H + |A|^2H) - (\Delta H + |A|^2 H) V_i  \right. \\
&\quad \quad \left. + O(|\nabla H| + H^2) \right\} (\nabla_i H - H V_i) . 
\end{align*}
The first equation follows directly, recalling that $H$ is non-increasing.  To prove the second formula, use equation \eqref{eqn:useful-formula} to obtain
\begin{align*}
(\partial_t - \Delta) \frac{|\nabla H - HV|^2}{H} 
&\leq \frac{c H^2 |\nabla H|^2 + cH^4 - 2|\nabla (\nabla H - HV)|^2}{H} - \frac{|\nabla H - HV|^2 |A|^2}{H}\\
&\quad + \frac{4|\nabla H - HV|}{H^2} |\nabla |\nabla H - HV|||\nabla H| - 2\frac{|\nabla H - HV|^2}{H^3} |\nabla H|^2 \\
&\leq c H |\nabla H|^2 + c H^3 . \qedhere
\end{align*}
\end{proof}

\begin{lemma}\label{lemma:umbilic-gradient-boundary}
At any point on the boundary $\partial\Sigma$, we have
\begin{align*}
&N |\nabla H - HV|^2 = 0 \\
&N ( |A|^2 - H^2/n) \leq c_n H \sqrt{|A|^2 - H^2/n} .
\end{align*}
\end{lemma}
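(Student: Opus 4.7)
I will treat the two identities separately, both using the special structure of the sphere: on $S^n$ with our conventions, $k_{ij} = g_{ij}$, $K = n$, and $\nabla^S k \equiv 0$.

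For the first identity, the key observation is that $V|_{\partial\Sigma} = N$. Indeed, on $\partial\Sigma$ one has $\bar V = \nu_S = N$, and since $N \in T\Sigma$ we have $\langle \bar V, \nu\rangle = 0$, so $V = \bar V^T = N$ on $\partial\Sigma$. Setting $W = \nabla H - HV$, this gives $\langle W, N\rangle = NH - H = 0$ along $\partial\Sigma$, so $W$ is tangent to $\partial\Sigma$. I would then compute $\nabla_N V = 0$ at any boundary point: the hypothesis $\bar\nabla_{\nu_S} \bar V \equiv 0$ on $S^n$ kills $(\bar\nabla_N \bar V)^T$, and the remaining piece from differentiating $\langle \bar V, \nu\rangle \nu$ vanishes because $\langle \bar V, \nu\rangle = 0$ on $\partial\Sigma$ and $\nu$ has zero tangential part. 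Hence $\nabla_N W = \nabla_N \nabla H - HV$, and pairing with $W$ (which is $\perp N$) eliminates the $HV$ term. Finally, for $X \in T_p \partial\Sigma$, symmetry of the Hessian gives
\[
\nabla^2 H(N, X) = X(NH) - \langle \nabla_X N, \nabla H\rangle,
\]
and on $S^n$ one has $\nabla_X N = \bar\nabla_X \nu_S = X$ for $X \in T\partial\Sigma$, while $NH = H$ along $\partial\Sigma$ makes $X(NH) = X(H) = \langle X, \nabla H\rangle$. The two terms cancel exactly, so $\langle \nabla_N \nabla H, W\rangle = 0$, yielding $N|W|^2 = 0$.

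For the second identity, I would apply Lemma \ref{lemma:boundary-derivatives} specialized to $S = S^n$. With $k = g$, $K = n$, $\nabla^S k = 0$, and $\nu(K) = 0$, the formulas reduce to
\[
\nabla_N h_{ij} = h_{11} g_{ij} - h_{ij} \quad (i,j > 1), \qquad \nabla_N h_{11} = 2H - n h_{11}.
\]
Using $h_{N,X} = 0$ (which follows since $k_{\nu, X} = g(\nu, X) = 0$), the full second fundamental form decomposes diagonally as $|A|^2 = h_{11}^2 + \sum_{i,j>1} h_{ij}^2$. A direct calculation gives
\[
N|A|^2 = 6 h_{11} H - 2n h_{11}^2 - 2|A|^2,
\]
and combining with $N(H^2/n) = 2H^2/n$, after writing $h_{11} = H/n + \delta$ with $|\delta| \leq \sqrt{|A|^2 - H^2/n}$ (since $\delta$ is a diagonal entry of the trace-free $h^0$), the algebra collapses to
\[
N\bigl(|A|^2 - H^2/n\bigr) = -2\bigl(|A|^2 - H^2/n\bigr) + 2\delta H - 2n\delta^2 \leq 2 H \sqrt{|A|^2 - H^2/n}.
\]

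The main obstacle is essentially bookkeeping: making sure the specialization to $S^n$ of each term in Lemma \ref{lemma:boundary-derivatives} is correct, and that the cross terms in the expansion of $N(|A|^2 - H^2/n)$ really do conspire to leave only the $2\delta H$ term (up to a beneficial negative quadratic). Once that algebraic miracle is in place, bounding by $\sqrt{E}\cdot H$ is immediate.
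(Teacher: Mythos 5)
Your proof is correct and follows the same underlying mechanism as the paper, but presents the first identity in a cleaner, more covariant form. For $N|\nabla H - HV|^2 = 0$, you reduce everything to showing $\nabla^2 H(N, W) = 0$ via Hessian symmetry and the cancellation $X(NH) = X(H) = \langle X, \nabla H\rangle = \langle \nabla_X N, \nabla H\rangle$, whereas the paper expands $\partial_1 |W|^2$ in Fermi coordinates, producing the intermediate expression $-|W|^2 + |\nabla H|^2 - |\nabla_1 H|^2$, which then vanishes because $W \perp N$ and $NH = H$; the same facts ($V = N$ on $\partial\Sigma$, $\nabla_N V = 0$, the Neumann condition, and $\nabla_X N = X$) are driving both arguments, but yours makes the cancellation transparent rather than incidental. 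For the second identity, both you and the paper apply Lemma \ref{lemma:boundary-derivatives} with $k = g$, $K = n$, $\nabla^S k = 0$ to obtain $\tfrac12 N(|A|^2 - H^2/n) = 3H h_{11} - n h_{11}^2 - |A|^2 - H^2/n$; the paper then writes this as $(|A|^2-H^2/n) + \sum_i(\lambda_i - \lambda_N)(\lambda_N - 2\lambda_i)$ and uses a sum-of-squares bound, while you substitute $h_{11} = H/n + \delta$ to get the exact identity $N(|A|^2 - H^2/n) = -2(|A|^2 - H^2/n) + 2H\delta - 2n\delta^2$ and then bound $|\delta| \le \sqrt{|A|^2 - H^2/n}$. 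Both are valid; your algebra has the slight advantage of exposing directly that the quadratic term in $\delta$ comes with a favorable sign.
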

\begin{proof}
Choose orthonormal coordinates at $p$, such that $\partial_1 \equiv N$ along $\partial\Sigma$, the integral curves of $\partial_1$ are geodesics.  We calculate
\begin{align*}
\frac{1}{2} \partial_1 |\nabla H - H V|^2 
&= -\sum_{i, j > 1} g^{ij} (\partial_i H - H V_i)(\partial_j H - H V_j) \\
&\quad + \sum_i (\partial_1 \partial_i H - \partial_1 (H V_i))(\partial_i H - H V_i) \\
&= -|\nabla H - H V|^2 + \sum_{i > 1} (\partial_i \partial_1 H)(\partial_i H) \\
&= -|\nabla H - HV|^2 + |\nabla H|^2 - |\nabla_1 H|^2 .
\end{align*}

We prove the second formula.  Write $\lambda_i$ for the principle curvatures, and $\lambda_N$ for the curvature in direction $N$.  Using Lemma \ref{lemma:boundary-derivatives}, we obtain
\begin{align*}
\frac{1}{2}\partial_1 (|A|^2 - H^2/n)
&= 3H \lambda_N - n\lambda_N^2 - |A|^2 - H^2/n \\
&= (|A|^2 - H^2/n) + 3 H \lambda_N - n\lambda_N^2 - 2|A|^2 .
\end{align*}

We calculate
\begin{align*}
3 H \lambda_N - n\lambda_N^2 - 2|A|^2
&= \sum_i (3\lambda_N \lambda_i - \lambda_N^2 - 2 \lambda_i^2) \\
&= \sum_i (\lambda_i - \lambda_N)(\lambda_N - 2\lambda_i) \\
&= -2\sum_i (\lambda_i - \lambda_N)^2 - \lambda_N \sum_i (\lambda_i - \lambda_N) \\
&\leq \frac{-1}{2(n-1)} \sum_{i, j} (\lambda_i - \lambda_j)^2 + H\sqrt{n\sum_{i,j} (\lambda_i - \lambda_j)^2} \\
&= -\frac{n}{n-1} (|A|^2 - H^2/n) + \sqrt{2} n H \sqrt{ |A|^2 - H^2/n } . \qedhere
\end{align*}
\end{proof}

Using Lemma \ref{lemma:umbilic-gradient-boundary} and equation \eqref{eqn:spherical-boundary-derivative}, we then have
\begin{align*}
N g &\leq bH (|A|^2 - H^2/n) + c_n b H^2 \sqrt{|A|^2 - H^2/n} + c_n ba |A|^2 - 3\eta H^3\\
& \leq (g - D) + c_n b H^2 \sqrt{ C_0 H^{2-\sigma}} + c_n ba H^2 - 2\eta H^3 \\
& \leq g
\end{align*}
provided $D = D(\eta, C_0, \sigma, a, b, n)$ is sufficiently big.

By Theorem \ref{theorem:umbilic-f-is-bounded} (see Lemma 6.5 of \cite{huisken:umbilic-pinching}), we can choose $a = a(C_0, \sigma, n)$ sufficiently large so that,
\begin{equation}\label{eqn:random-evolution-equation}
(\partial_t - \Delta)(|A|^2 - H^2/n) \leq -c_n H |\nabla A|^2 + a |\nabla A|^2 + 3 H^3 (|A|^2 - H^2/n) .
\end{equation}

Using Lemma \ref{lemma:umbilic-gradient-evolution} and equation \eqref{eqn:random-evolution-equation}, we have for $b = b(\Sigma_0, n)$ sufficiently large
\begin{align*}
(\partial_t - \Delta) g 
&\leq c H |\nabla A|^2 + c H^3 + 6 n H |\nabla A|^2 - b c_n H |\nabla A|^2 \\
&\quad + 2 ba H^4 + 3 b H^3(|A|^2 - H^2/n) - 3\frac{\eta}{n} H^5 \\
&\leq 2baH^4 + 3b C_0 H^{5-\sigma} - 3 \frac{\eta}{n} H^5 \\
&\leq C(\eta, \sigma, C_0, \Sigma_0, n)
\end{align*}
using that $c = c(\Sigma_0, n)$.

Take $\phi$ the cutoff function of Section \ref{section:controlling-A}, with constants chosen so that
\[
(\partial_t - \Delta) \phi \leq 0, \quad N \phi \leq -\phi
\]

Then for $a, b, D$ chosen as above, we have
\[
N (g \phi) \leq 0
\]
and
\[
(\partial_t - \Delta) g\phi \leq C \phi + 2<\nabla g, \nabla \phi> .
\]
Since $\phi$ is uniformly bounded in time, we deduce that $\max_{\Sigma_t} g\phi$ increases at worst linearly, and therefore
\[
g \leq \tilde C(\eta, \Sigma_0, n) .
\]

This completes the proof of the umbilic pinching Theorem \ref{theorem:umbilic-pinching}.

\bibliographystyle{alpha}

\end{document}